\newtheorem{theorem}{Theorem}[section]
\newtheorem{definition}[theorem]{Definition}
\newtheorem{example}{Example}
\newtheorem{lemma}[theorem]{Lemma}
\newtheorem{proposition}[theorem]{Proposition}
\newtheorem{corollary}[theorem]{Corollary}
\newtheorem{assumption}[theorem]{Assumption}
\newtheorem{remark}[theorem]{Remark}
\titlespacing*{\section}{0pt}{1ex}{1ex}
\titlespacing*{\section}{0pt}{2ex}{1ex}
\titlespacing*{\subsection}{0pt}{1.5ex}{0.8ex}
\titlespacing*{\subsubsection}{0pt}{1.2ex}{0.6ex}
\begin{document}
\title{Shallow neural network yields regularization for ill-posed inverse problems} 
\author[Wang et al.]{Lan Wang$^1$, Qiao Zhu$^1$, Bangti Jin$^2$ and Ye Zhang$^{1,3,*}$} \thanks{$^1$School of Mathematics and Statistics, Beijing Institute of Technology, Beijing 100081, China.} 
\thanks{$^2$Department of Mathematics, The Chinese University of Hong Kong, Shatin, N.T., Hong Kong.} 
\thanks{$^3$MSU-BIT-SMBU Joint Research Center of Applied Mathematics, Shenzhen MSU-BIT University, Shenzhen 518172, China.} 
\thanks{$^*$Author to whom any correspondence should be addressed. Email: \texttt{ye.zhang@smbu.edu.cn}.}


\begin{abstract}
In this paper, we develop a regularization theory for neural network approximations of general ill-posed operator equations with noisy data. Within the framework of iterative regularization, we introduce two expanding neural network methods (ENNs) under different a priori assumptions on the exact solution. Instead of prescribing a fixed architecture, ENNs adaptively select the number of neurons through an a posteriori stopping rule, so that the selected network size serves as a regularization parameter balancing approximation accuracy and stability with respect to data noise. We prove the regularization properties of the proposed ENNs and establish quantitative relationships between the selected network size and the noise level. Within the framework of variational regularization, we propose a neural network-based Tikhonov scheme and derive both convergence and convergence-rate results under mild assumptions. The resulting estimates account for the noise level, the network size, and the underlying smoothness expressed through general variational source conditions, thereby allowing greater flexibility than existing results. Numerical experiments demonstrate the effectiveness and robustness of the proposed algorithms. In particular, they show that, for highly noisy data, relatively small network architectures can already produce stable reconstructions, whereas excessively large architectures may degrade stability due to overfitting.
\end{abstract}

\keywords{Ill-posed inverse problems, two-layer networks, Barron spaces, iterative regularization, Tikhonov regularization, convergence rates}

\maketitle

\section{Introduction}
Inverse problems arise in various scientific and engineering fields, including medical imaging, geophysics, and computational physics. These problems often involve reconstructing unknown parameters or functions from indirect, noisy, and sometimes incomplete measurements. Traditional approaches rely on regularization techniques, optimization algorithms, and statistical inference. However, these methods often suffer from computational inefficiency, sensitivity to noise, and the need for handcrafted priors. With the rapid advancement of deep learning, neural networks have emerged as powerful tools for solving inverse problems. Their ability to learn complex mappings from data has significantly improved reconstruction accuracy and computational efficiency. In this paper, we study the neural network methods for solving general operator equations
\begin{equation}
\label{1}
A(f)=g, 
\end{equation}
where $A \in \mathbb{M}(\mathcal{X}, \mathcal{Y})$, and $\mathbb{M}(\mathcal{X}, \mathcal{Y})$ denotes the space of all bounded (possibly non-linear) operators between two reflexive Banach spaces of functions. For simplicity, we denote by $\|\cdot\|$ the norms for both Banach spaces. The precise setting of ambient spaces for functions $f$ and $g$ will be proposed later in Section \ref{Preliminaries}. Suppose that instead of inexact right-hand side $g$, we are given only perturbed element $g^\delta$, obeying the deterministic noise model with noise level $\delta>0$:
\begin{equation}
\label{NoiseLevel}
\|g^{\delta}-g\|_\mathcal{Y} \le \delta.
\end{equation}

Since the operator equation \eqref{1} is generally ill-posed, directly fitting the noisy data $g^\delta$ may lead to instability. A regularization method is therefore introduced to construct, for the known forward operator $A$ and the observed noisy data $g^\delta$ with noise level $\delta$, a stable approximation $f^\delta$ to the exact solution. Following Tikhonov's concept of regularization \cite{Yagola1995}, such an approximation is called a regularized solution if it converges to an exact solution $f^\dagger$ of \eqref{1}, in a prescribed topology, as $\delta\to0$. The computational inverse problem considered in this work can therefore be formulated as follows: given inexact data $g^\delta$ with the noise model (\ref{NoiseLevel}) and the forward model (\ref{1}), find a regularized solution $f^\delta$ of inverse problems \eqref{1}-\eqref{NoiseLevel}. It should be noted that this notion of regularization does not, in general, determine a unique regularized solution. Indeed, if $f^\delta$ is a regularized solution and $h(\delta)\in\mathcal X$ is any perturbation satisfying $h(\delta)\to0$ as $\delta\to0$ in the same topology, then, provided that $f^\delta+h(\delta)\in\mathcal X$, the perturbed family $f^\delta+h(\delta)$ also converges to $f^\dagger$. This observation indicates that the definition of regularization leaves room for selecting regularized solutions from a prescribed approximation class. In this work, we investigate whether such a selection can be made within a neural network class. More precisely, we ask whether stable finite-dimensional regularized solutions can be constructed by neural networks while retaining convergence to the exact solution as the noise level tends to zero.

The approximation theory of neural networks provides an important foundation for this purpose, since it characterizes approximation capabilities for prescribed function classes and quantifies their dependence on network architecture. Classical universal approximation results show that single-hidden-layer feedforward networks with suitable activation functions can approximate large classes of continuous functions, as established by Cybenko (1989) \cite{cybenko1989approximation}, Hornik (1989, 1991) \cite{hornik1989multilayer,hornik1991approximation}, and Stinchcombe \& White (1989) \cite{stinchcombe1989universal}. Barron \cite{barron1993universal} further obtained quantitative approximation bounds for one-hidden-layer neural networks, showing that functions with suitable Barron-type regularity can be approximated at the rate $\mathcal O(n^{-1/2})$ in $L^2$, where $n$ is the number of hidden neurons. 
More recent works have quantified how the approximation error depends on the width and depth of ReLU networks for various function classes; see, for example, \cite{telgarsky2016benefits,eldan2016power,lu2021deep,shen2022optimal,yarotsky2020phase,hon2022simultaneous}.
These approximation results provide important guidance on how the expressive power of neural networks grows with architectural complexity. However, they mainly concern direct approximation problems, where the target function is available. In ill-posed inverse problems, the unknown solution must be recovered from noisy indirect data through a forward operator. Hence, increasing the network size improves approximation capability but may also amplify the instability caused by data perturbations. This makes the architecture itself a potential regularization parameter.

Neural network methods have been widely applied to inverse problems. For parameter estimation problems, particularly those governed by partial differential equations (PDEs), representative methods include the deep Ritz method \cite{yu2018deep}, the deep Galerkin method \cite{sirignano2018dgm}, physics-informed neural networks (PINNs) \cite{raissi2019physics}, weak adversarial networks \cite{zang2020weak}, and other neural network-based approaches for parameter identification in PDE models \cite{al2022extensions,fuhg2023deep,haghighat2021physics,kadeethum2021framework,raissi2020hidden,wang2025kolmogorov}. For image reconstruction problems, convolutional neural networks and U-Net type architectures have been successfully used in applications such as electrical impedance tomography, computed tomography, and magnetic resonance imaging \cite{lecun1998gradient,ronneberger2015u,bao2020numerical,cen2023electrical,wang2021error,adler2017solving,barbano2022unsupervised,jin2017deep,cui2024deep,hammernik2018learning,schlemper2017deep}. These works demonstrate the practical effectiveness of neural networks in inverse problems, but their main emphasis is often on model design, data-driven reconstruction performance, or application-specific parameter identification.

Closer to the present work are neural network methods that are explicitly connected to regularization theory. The NETT approach defines a data-driven regularizer through a trained neural network and incorporates it into a generalized Tikhonov functional, for which well-posedness, convergence, and error estimates have been established \cite{li2020nett}. Another related direction is the deep image prior, in which the architecture of an untrained convolutional generator network itself serves as an implicit image prior \cite{ulyanov2018deep}. Dittmer et al. further investigated deep image prior methods for ill-posed inverse problems and interpreted them as a form of regularization induced by the network architecture \cite{dittmer2020regularization}. These approaches demonstrate that neural networks can contribute to regularization either through learned regularizers or through architecture-induced priors. In contrast, the present paper employs neural networks directly to represent regularized solutions, rather than to define an external regularizer or to impose a fixed generator prior. More precisely, the approximation in this work is obtained through an unsupervised learning approach: it is learned directly from the given noisy observation $g^\delta$, without relying on external training samples or labeled exact solutions. Moreover, we do not prescribe a fixed architecture in advance. Instead, we investigate whether suitably constrained and adaptively selected classes of shallow neural networks can serve as admissible regularization spaces for ill-posed inverse problems. In this framework, the network width is treated as an architecture-dependent regularization parameter and is selected from the noisy data. This perspective naturally leads to the following questions:
\begin{itemize}
\item[(\textbf{Q1}):] Does there exist a neural network approximation that provides a regularized solution to the ill-posed problem?
\item[(\textbf{Q2}):] Is a larger neural network always preferable for solving ill-posed inverse problems with noisy data, or is there a stability limitation on the size or complexity of the neural network? What is the relation between noise level and size of neural network?
\item[(\textbf{Q3}):] If such a limitation exists, how can one construct a neural network with sufficiently small architectural complexity that remains effective in solving the inverse problem with noisy data?  
\item[(\textbf{Q4}):] What is an appropriate size of a neural network for a variational regularization method based on neural networks that achieves optimal convergence rates with respect to the noise level for general ill-posed operator problems (\ref{1})?
\end{itemize}
The questions (Q1)--(Q3) are addressed by the expanding neural network
methods (ENNs) developed in Section~\ref{theory}, while (Q4) is studied in Section~\ref{Tikhonov} through a neural network-based Tikhonov regularization scheme (Tikhonov-NN) under variational source conditions.

In this work, within the framework of iterative regularization theory \cite{engl1996regularization,BK2008}, and inspired by the method of extending compacts \cite{YagolaTitarenko2002,DorofeevYagola2004,ZhangGulliksson2019}, we propose two expanding neural network methods for constructing regularized solutions using shallow neural networks. In these methods, the number of neurons is adaptively selected by an a posteriori stopping rule, and the selected network size serves as a regularization parameter that balances approximation accuracy and stability in the presence of noisy data. Within the framework of variational regularization theory, in a spirit similar to PINNs for approximating solution functions of PDEs, we study neural network approximations for inverse problems in a general abstract setting and derive unified convergence results under mild assumptions.

Our analysis of these unified convergence results is carried out for two-layer neural networks. The natural functional framework for such networks is the so-called \textit{Barron space}. Li et al. \cite{LiYuanyuan} established approximation results for two-layer ReLU-type networks in Barron spaces. In particular, for functions in $\mathcal B_1$, such networks with bounded Barron-type representation cost achieve an $H^1$ approximation rate of order $\mathcal O(n^{-1/2})$. This result relates the network width, the Barron representation cost, and the approximation error, and thus provides the theoretical foundation for using bounded two-layer neural network classes as admissible solution spaces in our regularization analysis.

The remainder of the paper is structured as follows: Section \ref{Preliminaries} provides preliminary results regarding the properties of Barron spaces, forming the foundation for our analysis. Section \ref{theory} addresses iterative regularization, where we introduce the two proposed expanding neural network methods. In Section \ref{Tikhonov}, we examine Tikhonov regularization and present convergence analyses, including convergence rates under variational source conditions. Numerical experiments demonstrating the effectiveness of our methods are presented in Section \ref{simulation}, followed by concluding remarks in Section \ref{sec:Con}.

\section{Preliminaries of Barron spaces}
\label{Preliminaries}
Functions in a Barron space take the following form
\begin{equation}
\label{continuef}
f(\mathbf{x})=\int_{P} a  \sigma\left(\boldsymbol{b}^T \mathbf{x}+c\right) \rho(\mathrm{d} a, \mathrm{d} \boldsymbol{b}, \mathrm{d} c), \quad \mathbf{x} \in \Omega\subset \mathbb{R}^d,
\end{equation}
where  $P=\mathbb{R}\times \mathbb{R}^d\times \mathbb{R}$, $\rho$ is a probability distribution on $(P,\Sigma_P)$ with $\Sigma_P$ being a Borel $\sigma$-algebra on $P$, $\sigma$ is activation function, and $\Omega$ is a bounded domain. We focus on the ReLU activation $\sigma(z):=\max(0,z):=(z)_{+}$. Then given the probability distribution $\rho$, we define
\begin{equation*} 
\|f\|_{\mathcal{B}_{p,\rho}}=\left(\mathbb{E}_\rho\left[|a|^p\left(\|\boldsymbol{b}\|_1+|c|\right)^p\right]\right)^{1 / p}=\left(\int_{P}|a|^p(\|\boldsymbol{b}\|_{1}+|c|)^{p})\rho(\mathrm{d} a,\mathrm{d}\boldsymbol{b},\mathrm{d} c)\right)^{\frac{1}{p}}, \quad 1 \leq p \leq+\infty.
\end{equation*}
Then, the Barron norm of a function of the form (\ref{continuef}) is defined by
\begin{equation}
\label{Bpnorm}
    \|f\|_{\mathcal{B}_{p}}=\inf_{\rho} \|f\|_{\mathcal{B}_{p,\rho}}. 
\end{equation}
The Barron spaces $\mathcal{B}_p$ are defined as the set of continuous functions that can be represented by $(\ref{continuef})$ with a finite Barron norm: 
\begin{equation}
\label{Barron}
\mathcal{B}_p := \{ f:~ f \text{~satisfies~} \eqref{continuef} \text{~and~} \|f\|_{\mathcal{B}_{p}}<\infty \}. 
\end{equation}
By definition, the Barron space $\mathcal B_p$ consists of continuous functions. Thus, after setting $K:=\overline{\Omega}$, we regard $\mathcal B_p$ as a subset of $C(K)$. Then the following properties hold for Barron spaces.
\begin{proposition}
[{\cite{Weinan2019BarronSA}}]For any $1\le p\le \infty$, there hold $\mathcal{B}_1=\mathcal{B}_{p}$ (as sets) and $\|f\|_{\mathcal{B}_1}=\|f\|_{\mathcal{B}_{p}}$.
\end{proposition}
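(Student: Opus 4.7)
The plan is to exploit the positive one-homogeneity of the ReLU activation, together with Jensen's inequality, to collapse all the Barron norms to a single value by choosing a clever representative measure for $f$.

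\textbf{Easy direction.} First I would show $\|f\|_{\mathcal{B}_1}\le\|f\|_{\mathcal{B}_p}$ for every $p\ge 1$. This is a direct application of Jensen's inequality to the probability measure $\rho$: since $t\mapsto t^p$ is convex on $[0,\infty)$,
\begin{equation*}
\mathbb{E}_\rho\bigl[|a|(\|\boldsymbol{b}\|_1+|c|)\bigr]\;\le\;\bigl(\mathbb{E}_\rho\bigl[|a|^p(\|\boldsymbol{b}\|_1+|c|)^p\bigr]\bigr)^{1/p},
\end{equation*}
so $\|f\|_{\mathcal{B}_{1,\rho}}\le\|f\|_{\mathcal{B}_{p,\rho}}$ for every $\rho$ representing $f$, and taking the infimum yields the bound.

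\textbf{Hard direction.} For the reverse inequality $\|f\|_{\mathcal{B}_p}\le\|f\|_{\mathcal{B}_1}$ I would construct, from any admissible $\rho$, a new probability measure on $P$ that represents the same $f$ and makes the integrand $|a|^p(\|\boldsymbol{b}\|_1+|c|)^p$ \emph{constant}. This proceeds in two normalization steps. Step one: use ReLU's identity $\sigma(tz)=t\sigma(z)$ for $t\ge 0$ together with the change of variables $T(a,\boldsymbol{b},c)=\bigl(a(\|\boldsymbol{b}\|_1+|c|),\,\boldsymbol{b}/(\|\boldsymbol{b}\|_1+|c|),\,c/(\|\boldsymbol{b}\|_1+|c|)\bigr)$ to push $\rho$ forward to a probability measure $\tilde{\rho}:=T_*\rho$ supported on the set $\{\|\boldsymbol{b}\|_1+|c|=1\}$; a quick check shows that $\tilde{\rho}$ still represents $f$ and that $\|f\|_{\mathcal{B}_{1,\tilde{\rho}}}=\|f\|_{\mathcal{B}_{1,\rho}}$ (the homogeneity of $|a|^p(\|\boldsymbol{b}\|_1+|c|)^p$ under $T$ is what makes this go). Step two: set $M:=\mathbb{E}_{\tilde{\rho}}[|a|]=\|f\|_{\mathcal{B}_{1,\tilde{\rho}}}$ and define the reweighted measure $\hat{\rho}$ by $\mathrm{d}\hat{\rho}=(|a|/M)\,\mathrm{d}\tilde{\rho}$ together with the new amplitude $\hat{a}:=M\,\mathrm{sign}(a)$. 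One verifies that $\hat{\rho}$ is a probability measure and that
\begin{equation*}
\int_P \hat{a}\,\sigma(\boldsymbol{b}^T\mathbf{x}+c)\,\mathrm{d}\hat{\rho}=\int_P a\,\sigma(\boldsymbol{b}^T\mathbf{x}+c)\,\mathrm{d}\tilde{\rho}=f(\mathbf{x}),
\end{equation*}
so $\hat{\rho}$ represents $f$ as well. By construction $|\hat{a}|^p(\|\boldsymbol{b}\|_1+|c|)^p=M^p$ is $\hat{\rho}$-a.e.\ constant, whence $\|f\|_{\mathcal{B}_{p,\hat{\rho}}}=M=\|f\|_{\mathcal{B}_{1,\rho}}$ for every $p\in[1,\infty]$. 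Taking the infimum over $\rho$ gives $\|f\|_{\mathcal{B}_p}\le\|f\|_{\mathcal{B}_1}$.

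\textbf{Conclusion and set equality.} Combining the two directions yields $\|f\|_{\mathcal{B}_p}=\|f\|_{\mathcal{B}_1}$. Finiteness of one norm is therefore equivalent to finiteness of the other, which gives $\mathcal{B}_p=\mathcal{B}_1$ as sets. The main obstacle is the second normalization step: one has to verify carefully that reweighting the measure by $|a|/M$ and redefining the amplitude preserves the value of $f(\mathbf{x})$ for every $\mathbf{x}$, and that $\hat{\rho}$ is genuinely a probability measure; the rest of the argument is essentially Jensen plus a change of variables.
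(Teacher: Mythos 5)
The paper states this proposition without proof, deferring to \cite{Weinan2019BarronSA}, and your argument is correct and is essentially the proof given in that reference: Jensen's inequality yields $\|f\|_{\mathcal{B}_1}\le\|f\|_{\mathcal{B}_p}$, while the reverse inequality follows from the ReLU-homogeneity normalization onto $\{\|\boldsymbol{b}\|_1+|c|=1\}$ and the reweighting $\mathrm{d}\hat{\rho}=(|a|/M)\,\mathrm{d}\tilde{\rho}$ with $\hat{a}=M\,\mathrm{sign}(a)$, which makes the integrand constant. The only loose end is the $\rho$-measurable set of atoms with $\|\boldsymbol{b}\|_1+|c|=0$, where $T$ is undefined; since such atoms contribute nothing to $f$ or to any $\mathcal{B}_{p,\rho}$ integrand, they can be remapped to $(0,e_1,0)$ before applying $T$, and the argument stands.
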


\begin{proposition}[{\cite{LiYuanyuan}}]
\label{proBarron}
The set $\mathcal{B}_1$ equipped with the functional $f \mapsto\|f\|_{\mathcal{B}_1}$ is a normed space. Moreover, the Barron space $\mathcal{B}_1$ is a subspace of standard Sobolev space $H^1(\Omega)$, and there exists a constant $C(\Omega,d)$ such that for all $f\in \mathcal{B}_1$ there holds $\|f\|_{H^1(\Omega)}\le C(\Omega,d)\|f\|_{\mathcal{B}_1}$.
\end{proposition}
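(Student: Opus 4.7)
The plan is to verify the three norm axioms in part (1) directly from the integral representation (\ref{continuef}) by performing elementary operations on the underlying probability measure $\rho$ (pushforward for scaling, rescaled convex mixture for addition), and then to deduce part (2) by bounding $|f|$ and the weak gradient $|\nabla f|$ pointwise on the bounded domain $\Omega$ in terms of $\|f\|_{\mathcal{B}_1}$, after which integration over $\Omega$ immediately yields the $H^1$ estimate.

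\textbf{Norm axioms.} Nonnegativity is immediate. For homogeneity, if $\rho$ represents $f$ via (\ref{continuef}), then the pushforward of $\rho$ under $(a,\boldsymbol{b},c)\mapsto(\lambda a,\boldsymbol{b},c)$ is a probability measure that represents $\lambda f$ and has Barron value $|\lambda|\,\|f\|_{\mathcal{B}_{1,\rho}}$; taking infimum yields $\|\lambda f\|_{\mathcal{B}_1}=|\lambda|\,\|f\|_{\mathcal{B}_1}$. For subadditivity, pick near-optimal $\rho_1,\rho_2$ for $f,g$, fix $t\in(0,1)$, let $\tilde\rho_1,\tilde\rho_2$ be the pushforwards of $\rho_1,\rho_2$ under $(a,\boldsymbol{b},c)\mapsto(a/t,\boldsymbol{b},c)$ and $(a,\boldsymbol{b},c)\mapsto(a/(1-t),\boldsymbol{b},c)$ respectively, and form the mixture $\rho=t\tilde\rho_1+(1-t)\tilde\rho_2$. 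A direct calculation shows $\rho$ is a probability measure that represents $f+g$ and satisfies
\begin{equation*}
\|f+g\|_{\mathcal{B}_{1,\rho}} \;=\; \|f\|_{\mathcal{B}_{1,\rho_1}}+\|g\|_{\mathcal{B}_{1,\rho_2}},
\end{equation*}
so passing to the infimum gives $\|f+g\|_{\mathcal{B}_1}\le\|f\|_{\mathcal{B}_1}+\|g\|_{\mathcal{B}_1}$. For positive definiteness, since $\Omega$ is bounded (say $\sup_{\mathbf{x}\in\Omega}\|\mathbf{x}\|_\infty\le M$), the ReLU bound gives $|\sigma(\boldsymbol{b}^T\mathbf{x}+c)|\le|\boldsymbol{b}^T\mathbf{x}+c|\le\max(M,1)(\|\boldsymbol{b}\|_1+|c|)$, whence $|f(\mathbf{x})|\le\max(M,1)\,\mathbb{E}_\rho[|a|(\|\boldsymbol{b}\|_1+|c|)]$ for every admissible $\rho$; hence $\|f\|_{\mathcal{B}_1}=0$ forces $f\equiv 0$.

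\textbf{Sobolev embedding and main obstacle.} The same pointwise bound yields $\|f\|_{L^\infty(\Omega)}\le\max(M,1)\|f\|_{\mathcal{B}_1}$, and in particular $\|f\|_{L^2(\Omega)}\le C_1(\Omega)\|f\|_{\mathcal{B}_1}$. For the weak gradient, differentiate (\ref{continuef}) under the integral sign using $\sigma'(z)=\mathbf{1}_{\{z>0\}}$ (defined a.e.) to obtain
\begin{equation*}
\nabla f(\mathbf{x}) \;=\; \int_{P} a\,\mathbf{1}_{\{\boldsymbol{b}^T\mathbf{x}+c>0\}}\,\boldsymbol{b}\,\rho(\mathrm{d}a,\mathrm{d}\boldsymbol{b},\mathrm{d}c) \quad\text{for a.e. }\mathbf{x}\in\Omega,
\end{equation*}
so $|\nabla f(\mathbf{x})|\le\mathbb{E}_\rho[|a|\,\|\boldsymbol{b}\|_1]\le\|f\|_{\mathcal{B}_{1,\rho}}$ a.e.; integrating over $\Omega$ and infimizing gives $\|\nabla f\|_{L^2(\Omega)}\le C_2(\Omega)\|f\|_{\mathcal{B}_1}$, and combining with the $L^2$ estimate yields the claim with $C(\Omega,d)=(C_1(\Omega)^2+C_2(\Omega)^2)^{1/2}$. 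The main obstacle is the triangle inequality: the specific rescalings by $1/t$ and $1/(1-t)$ are precisely what allow $\rho$ to (i) remain a probability measure, (ii) reproduce $f+g$ rather than a convex combination $tf+(1-t)g$, and (iii) have Barron value \emph{exactly} $\|f\|_{\mathcal{B}_{1,\rho_1}}+\|g\|_{\mathcal{B}_{1,\rho_2}}$ so that the infimum is not inflated. A secondary technicality is justifying differentiation under the integral by a dominated-convergence argument based on the $\rho$-integrability of $|a|\,\|\boldsymbol{b}\|_1$ whenever $\|f\|_{\mathcal{B}_{1,\rho}}<\infty$.
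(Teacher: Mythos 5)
Your proof is correct, but note that the paper itself gives no proof of Proposition \ref{proBarron}: it is quoted verbatim from \cite{LiYuanyuan}, so there is no in-paper argument to compare against. Your route is the standard one for Barron spaces and all the key steps check out: the pushforward $(a,\boldsymbol b,c)\mapsto(\lambda a,\boldsymbol b,c)$ for homogeneity, the $t$/$(1-t)$-rescaled mixture for subadditivity (which indeed reproduces $f+g$ exactly with Barron value $\|f\|_{\mathcal B_{1,\rho_1}}+\|g\|_{\mathcal B_{1,\rho_2}}$, independent of $t$), the pointwise bound $|f(\mathbf x)|\le\max(M,1)\,\mathbb E_\rho[|a|(\|\boldsymbol b\|_1+|c|)]$ for definiteness and the $L^2$ estimate, and the gradient bound for the $H^1$ estimate. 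The one place that deserves slightly more care than "differentiate under the integral sign" is the identification of the weak gradient: since $\sigma$ is not differentiable at $0$, the clean justification is distributional — pair $f$ with $\partial_i\phi$ for $\phi\in C_c^\infty(\Omega)$, apply Fubini (licensed by the $\rho$-integrability of $|a|(\|\boldsymbol b\|_1+|c|)$ and the boundedness of $\Omega$), and use the known weak derivative $a\,\mathbf 1_{\{\boldsymbol b^T\mathbf x+c>0\}}\boldsymbol b$ of each ridge function, whose exceptional set $\{\boldsymbol b^T\mathbf x+c=0\}$ is Lebesgue-null for $\boldsymbol b\neq 0$. You flag exactly this technicality and name the right integrable majorant, so this is a presentational refinement rather than a gap.
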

We construct approximations using two-layer neural networks, which are
empirical analogues of \eqref{continuef} and have the form
\begin{equation}
\label{repfn}
f_n(\mathbf x)
=
\frac{1}{n}
\sum_{j=1}^n
a_j
\left(\boldsymbol b_j^T\mathbf x+c_j\right)_+,
\qquad
(a_j,\boldsymbol b_j,c_j)\in
\mathbb R\times\mathbb R^d\times\mathbb R .
\end{equation}
For $r>0$, define the admissible parameter set $M_r
:=
\left\{
(a,\boldsymbol b,c)\in\mathbb R\times\mathbb R^d\times\mathbb R:
|a|\le r,\ 
\|\boldsymbol b\|_1+|c|=1
\right\}$. 
Then $M_r$ is compact in
$\mathbb R\times\mathbb R^d\times\mathbb R$. We define the class of
two-layer networks with width $n$ and parameter radius $r$ by
\begin{equation}
\label{2}
X_{n,r}
:=
\left\{
f_n:
f_n(\mathbf x)
=
\frac{1}{n}
\sum_{j=1}^n
a_j
\left(\boldsymbol b_j^T\mathbf x+c_j\right)_+,
\quad
(a_j,\boldsymbol b_j,c_j)\in M_r
\right\}.
\end{equation}
For every $f_n\in X_{n,r}$, define the empirical probability measure $\rho_n:=\frac1n\sum_{j=1}^n
\delta_{(a_j,\boldsymbol b_j,c_j)}$ where \(\delta_z\) denotes the Dirac point mass at \(z\). Then $f_n$ admits the integral representation $f_n(\mathbf x)=\int_P
a(\boldsymbol b^T\mathbf x+c)_+
\,\rho_n(da,d\boldsymbol b,dc)$ for $\mathbf x\in\Omega$.
In applications, the radius may depend on the width; in this case we
write $r=r(n)$ and work with the class $X_{n,r(n)}$, where
$\{r(n)\}_{n\ge1}$ is chosen to be nondecreasing.
For the non-truncated (unbounded) case, we define the admissible approximation class as the union of all finite-radius classes:
\begin{equation}\label{eq:Xninfty_union}
X_{n,\infty}
:=\bigcup_{r<\infty}X_{n,r}.
\end{equation}
It is immediate that $\{X_{n,r}\}_{r<\infty}$ is nested: if $0<r_1\le r_2<\infty$, then $X_{n,r_1}\subset X_{n,r_2}\subset X_{n,\infty}.$

We shall use the following compactness results in the convergence analysis. The first lemma concerns the compactness of the finite-dimensional admissible network class with fixed width and fixed parameter radius. The second one gives $L^2(\Omega)$-compactness from uniform boundedness in $\mathcal B_1$ or $H^1(\Omega)$. The third one is tailored to sequences of neural networks whose widths may vary, provided that their Barron representation costs remain uniformly bounded. The proofs of these compactness results are collected in Appendix~\ref{App:pre}.

\begin{lemma}
\label{lemma compact set}
For any fixed $n\in \mathbb{N}$ and $r>0$, $X_{n,r}$ is sequentially compact in $C(K)$ and $L^2(\Omega)$.
\end{lemma}

\begin{lemma}
\label{convergesubsq}
Any bounded sequence in Barron space $\mathcal{B}_1$ or Sobolev space $H^1(\Omega)$ has a subsequence that converges in $L^2(\Omega)$.
\end{lemma}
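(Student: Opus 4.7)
The plan is to reduce both assertions to the continuous embedding supplied by Proposition \ref{proBarron}(2) combined with standard functional-analytic facts about $H^1(\Omega)$, so that no new machinery specific to Barron spaces is needed.

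For the first assertion, I would start with a sequence $\{f_k\}\subset\mathcal{B}_1$ satisfying $\sup_k\|f_k\|_{\mathcal{B}_1}\le M<\infty$. By Proposition \ref{proBarron}(2) we have $\|f_k\|_{H^1(\Omega)}\le C(\Omega,d)\,M$, so $\{f_k\}$ is a bounded sequence in $H^1(\Omega)$. Since $H^1(\Omega)$ is a reflexive Hilbert space, its closed balls are weakly sequentially compact (Eberlein--\v{S}mulian), so there exists a subsequence $\{f_{k_\ell}\}$ and an element $f\in H^1(\Omega)$ with $f_{k_\ell}\rightharpoonup f$ in $H^1(\Omega)$. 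This yields the subsequential convergence claimed in part~(1), understood in the weak sense (see the remark on obstacles below).

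For the second assertion, the statement is precisely the Rellich--Kondrachov compact embedding theorem. Under the standing assumption that $\Omega\subset\mathbb{R}^d$ is a bounded (Lipschitz) domain, the inclusion $H^1(\Omega)\hookrightarrow L^2(\Omega)$ is compact, so any bounded sequence in $H^1(\Omega)$ admits a subsequence converging strongly in $L^2(\Omega)$. Chaining (1) and (2) then produces the composite statement that will actually be used downstream, namely that a bounded sequence in $\mathcal{B}_1$ has a subsequence converging strongly in $L^2(\Omega)$ (and, along that same subsequence, weakly in $H^1(\Omega)$).

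The main subtlety, and the only point where I expect real discussion, is the reading of part~(1). Strong $H^1$-convergence does not follow from mere boundedness in $\mathcal{B}_1$: although functions in $\mathcal{B}_1$ are Lipschitz and hence lie in $W^{1,\infty}(\Omega)$, the embedding $W^{1,\infty}(\Omega)\hookrightarrow H^1(\Omega)$ is not compact (bounded gradients need not converge in $L^2$), and no finer compact embedding of $\mathcal{B}_1$ into $H^1$ is available from the tools developed so far in the paper. I would therefore phrase the conclusion of part~(1) explicitly as weak convergence in $H^1(\Omega)$, which is what the subsequent regularization arguments actually require via lower semicontinuity; no other part of the proof is delicate.
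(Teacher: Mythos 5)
Your treatment of the second assertion coincides with the paper's own proof, which is exactly the one-line chain you describe: Proposition \ref{proBarron}(2) turns a $\mathcal{B}_1$-bound into an $H^1(\Omega)$-bound, and the compact embedding $H^1(\Omega)\hookrightarrow L^2(\Omega)$ (Rellich--Kondrachov, which does require mild boundary regularity of $\Omega$ that the paper leaves implicit) extracts a strongly $L^2$-convergent subsequence. The subtlety you isolate in part (1) is, however, a genuine gap --- but it is a gap in the lemma as stated and in the paper's own justification, not a defect introduced by your argument. The paper proves part (1) by appealing only to the \emph{continuous} embedding $\mathcal{B}_1\hookrightarrow H^1(\Omega)$, which converts boundedness into boundedness but cannot by itself yield a strongly $H^1$-convergent subsequence; your observation that reflexivity delivers only weak $H^1$-convergence is exactly right, and no finer compactness of bounded $\mathcal{B}_1$-balls in $H^1(\Omega)$ is established anywhere in the paper.

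Where I would push back is on your suggestion that reading part (1) as weak convergence is harmless because "the subsequent regularization arguments only require lower semicontinuity." That is not the case here. In the proof of Theorem \ref{main} (Appendix \ref{App:A}), Lemma \ref{convergesubsq} is invoked to extract $f^{\delta_{k_i}}_{n(\delta_{k_i}),k(\delta_{k_i})}\to f^*$ \emph{strongly} in $H^1(\Omega)$ from a uniform $\mathcal{B}_1$-bound, and the contradiction is reached by passing to the limit in the inequality $\|f^{\delta_{k_i}}_{n(\delta_{k_i}),k(\delta_{k_i})}-f^\dagger\|_{H^1(\Omega)}\ge\varepsilon$. Under weak convergence the $H^1$-norm is only weakly lower semicontinuous, which gives an estimate in the wrong direction, so the contradiction --- and with it the claimed $H^1(\Omega)$-convergence in Theorem \ref{main} (and in case (ii) of Theorem \ref{main2}) --- does not survive the weakening. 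Note also that Lemma \ref{lemma compact set} cannot be substituted there, since the widths $n(\delta_{k_i})$ are unbounded along the sequence. So either part (1) must be backed by a genuine compactness theorem for bounded $\mathcal{B}_1$-balls in $H^1(\Omega)$, which requires an argument beyond Proposition \ref{proBarron}, or the strong-$H^1$ conclusions downstream should be downgraded to the $L^2(\Omega)$-convergence that your chained argument (weak $H^1$ plus Rellich--Kondrachov) legitimately provides.
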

\begin{proof}
It follows since $\mathcal{B}_1$ continuously embeds into $H^1(\Omega)$, and $H^1(\Omega)$ compactly embeds into $L^2(\Omega)$. 
\end{proof}

\begin{lemma}
\label{lem:barron_compact_lsc}
Let $K=\overline{\Omega}$ be compact. Let
\[
f_m(\mathbf x)
=
\frac1{n_m}\sum_{j=1}^{n_m}
a_j^{(m)}
\left((\boldsymbol b_j^{(m)})^T\mathbf x+c_j^{(m)}\right)_+,
\]
where $\|\boldsymbol b_j^{(m)}\|_1+|c_j^{(m)}|=1$. Assume that $q_m:=
\frac1{n_m}\sum_{j=1}^{n_m}|a_j^{(m)}|
\le Q$ for some constant $Q>0$ independent of $m$. Then $\{f_m\}$ is relatively compact in $C(K)$. Moreover, if a subsequence $f_{m_k}$ converges to $f$ in $C(K)$, then $f\in\mathcal B_1$ and
\[
\|f\|_{\mathcal B_1}
\le
\liminf_{k\to\infty} q_{m_k}.
\]
\end{lemma}

The next result shows that functions defined in the Barron space $\mathcal{B}_1$ can be approximated by two-layer networks with good approximation properties (\cite[Theorem 8]{LiYuanyuan} and \cite[Theorem 4]{Weinan2019BarronSA}).
\begin{proposition}[Approximation theorem]\cite{LiYuanyuan}
\label{Approximtion theorem}
    Let $f \in \mathcal{B}_1$, and denote $\rho=\rho(\epsilon)$ as its representing probability which satisfies $c_{\rho}(f) \leq(1+\epsilon)\|f\|_{\mathcal{B}_1}$ for some small $\epsilon>0$, where 
\begin{equation}
\label{cRho}
c_{\rho}(f)=\|f\|_{\mathcal{B}_{1,\rho}}.
\end{equation}
    Then, for any sample size $n \in \mathbb{N}$, there exists a probability $\rho_n$, a two-layer network $f_n\in X_{n,c_{\rho}(f)}$ with $\left\|f_n\right\|_{\mathcal{B}_1} \leq c_{\rho_n}\left(f_n\right) \leq c_{\rho}(f)$, and a constant $C(\Omega, d)$ depending only on the domain $\Omega$ and dimensionality $d$ such that
    \begin{equation*}
        \left\|f-f_n\right\|_{H^1(\Omega)}\leq \frac{C(\Omega, d)}{\sqrt{n}}\|f\|_{\mathcal{B}_1}.
    \end{equation*}
\end{proposition}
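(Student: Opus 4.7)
The plan is to combine a Monte Carlo probabilistic method with a reweighting trick that enforces the amplitude bound $|a_j|\le c_\rho(f)=r(n)$ required by membership in $X_{n,c_\rho(f)}$. Starting from the integral representation of $f$ associated with the near-optimal measure $\rho$, the first step is to exploit the positive $1$-homogeneity of ReLU to normalize the inner parameters: write $a\sigma(\boldsymbol{b}^T\mathbf{x}+c)=\tilde a\,\sigma(\tilde{\boldsymbol{b}}^T\mathbf{x}+\tilde c)$ with $\tilde a=a(\|\boldsymbol{b}\|_1+|c|)$ and the pair $(\tilde{\boldsymbol{b}},\tilde c)$ rescaled so that $\|\tilde{\boldsymbol{b}}\|_1+|\tilde c|=1$. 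Pushing $\rho$ forward under this map yields a probability measure $\tilde\rho$ for which $\int|\tilde a|\,d\tilde\rho=c_\rho(f)$.

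Next I would define a reweighted probability measure $q$ by $dq=(|\tilde a|/c_\rho(f))\,d\tilde\rho$, which gives $f(\mathbf{x})=c_\rho(f)\int \operatorname{sign}(\tilde a)\sigma(\tilde{\boldsymbol{b}}^T\mathbf{x}+\tilde c)\,dq$. Drawing $n$ i.i.d.\ samples $(\tilde a_j,\tilde{\boldsymbol{b}}_j,\tilde c_j)\sim q$ and setting $a_j:=c_\rho(f)\operatorname{sign}(\tilde a_j)$ produces $f_n=n^{-1}\sum_{j=1}^n a_j\sigma(\tilde{\boldsymbol{b}}_j^T\mathbf{x}+\tilde c_j)$ with $|a_j|=c_\rho(f)=r(n)$ and $\|\tilde{\boldsymbol{b}}_j\|_1+|\tilde c_j|=1$, so that $f_n\in X_{n,c_\rho(f)}$ and $\mathbb{E}[f_n(\mathbf{x})]=f(\mathbf{x})$ pointwise.

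The quantitative core is an $H^1$ variance estimate. Using the continuous embedding $\mathcal{B}_1\hookrightarrow H^1(\Omega)$ from Proposition~\ref{proBarron}, each weak partial derivative admits the representation $\partial_i f(\mathbf{x})=\int a b_i\mathbf{1}_{\{\boldsymbol{b}^T\mathbf{x}+c>0\}}\,d\rho$ after justifying interchange of integration and differentiation. Because $\Omega$ is bounded, the random summands defining $f_n(\mathbf{x})$ and each $\partial_i f_n(\mathbf{x})$ are uniformly bounded by a multiple of $c_\rho(f)$, and independence combined with unbiasedness yields $\mathbb{E}\|f-f_n\|_{H^1(\Omega)}^2\le C(\Omega,d)c_\rho(f)^2/n$. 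The probabilistic method then guarantees a realization meeting this bound, and the hypothesis $c_\rho(f)\le(1+\epsilon)\|f\|_{\mathcal{B}_1}$ yields the claimed $n^{-1/2}$ rate after absorbing $(1+\epsilon)$ into $C(\Omega,d)$.

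Finally, the norm control $\|f_n\|_{\mathcal{B}_1}\le c_{\rho_n}(f_n)\le c_\rho(f)$ follows by taking $\rho_n$ to be the empirical measure at the selected parameters: direct evaluation gives $c_{\rho_n}(f_n)=n^{-1}\sum_j|a_j|(\|\tilde{\boldsymbol{b}}_j\|_1+|\tilde c_j|)=c_\rho(f)$, while the infimum definition \eqref{Bpnorm} supplies the first inequality. The main obstacle I anticipate is the rigorous $H^1$ variance bound: the ReLU derivative is only defined almost everywhere, so the differentiation-under-the-integral identity against $\rho$ requires careful dominated-convergence or Fubini bookkeeping on the half-space $\{\boldsymbol{b}^T\mathbf{x}+c>0\}$, and one must verify that the reweighted sampling remains an unbiased estimator for the derivatives, not only for the function values.
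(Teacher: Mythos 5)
Your proposal is correct and follows essentially the same route as the sources the paper relies on: the paper does not prove this proposition itself but cites \cite[Theorem 8]{LiYuanyuan} and \cite[Theorem 4]{Weinan2019BarronSA}, whose argument is exactly your Monte Carlo sampling from the reweighted measure $dq=(|\tilde a|/c_\rho(f))\,d\tilde\rho$ after homogeneity normalization, an $H^1$ variance bound of order $c_\rho(f)^2/n$, and the probabilistic method, with the empirical measure supplying $c_{\rho_n}(f_n)=c_\rho(f)$. The one technical point you flag --- justifying the weak-derivative identity $\partial_i f=\int a b_i\mathbf{1}_{\{\boldsymbol{b}^T\mathbf{x}+c>0\}}\,d\rho$ --- is handled in the standard way (for each fixed $(\boldsymbol b,c)$ with $\boldsymbol b\neq 0$ the kink set is a Lebesgue-null hyperplane, so Fubini against test functions applies), and does not affect correctness.
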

The proof of Proposition~\ref{Approximtion theorem} is also recalled in Appendix~\ref{App:pre}.

\section{Expanding neural network methods}
\label{theory}

In this section, based on the method of expanding compacts \cite{YagolaTitarenko2002,DorofeevYagola2004}, we propose a new iterative regularization method, i.e., \textit{expanding neural network method} (ENNs). It converges to the unique exact solution $f^{\dagger}$ to $A(f) = g$, if $A$ is a continuous injective operator. We consider two cases, depending  on whether certain a priori information on the exact solution is available.

\subsection{Case 1: given an energy bound of \texorpdfstring{$c_{\rho}(f^{\dagger})$}.}
\label{CaseI}

Let $b:\mathbb{N}^+\to \mathbb{N}^+$ be a monotonically increasing function such that there exist constants $C_b, k_0\ge 1$ such that $ b(k+1)\le C_b\, b(k)$ holds for all $k\ge k_0$. The first iterative regularization scheme is given in the following algorithm. This algorithm requires the \emph{a priori} information $c_\rho(f^\dagger)$ of the exact solution, where $\rho$ denotes a representing probability measure of $f^\dagger$ and $c_\rho(f^\dagger)=\|f^\dagger\|_{\mathcal B_{1,\rho}}$.

\begin{algorithm}[H]
\caption{ENN1: an expanding neural network method for solving (\ref{1}) with knowledge of $c_{\rho}(f^{\dagger})$}
\label{alg:expanding_nn}
\begin{algorithmic}[1]
\REQUIRE Adaptive increasing sequence of neural networks $b(k)$, radius map $r(n)$, discrepancy parameter $\tau>1$, noise level $\delta$
\ENSURE Reconstructed solution $f_{n(\delta),k(\delta)}^{\delta}$, suboptimal network size $n(\delta)$, iteration number $k(\delta)$

\STATE \textbf{Initialization:} $k \gets 1$, $n \gets b(1)$
\STATE Compute 
    \begin{equation}\label{OPT-ENN1}
        f_{n,k}^{\delta} =
        \arg\min\limits_{f \in X_{n,r(n)}}
        \big\|A(f)-g^{\delta}\big\|_{\mathcal{Y}}\ ,\quad J_{n,k}^{\delta} = \big\|A(f_{n,k}^{\delta})-g^{\delta}\big\|_{\mathcal{Y}} 
    \end{equation}

\WHILE{$J_{n,k}^{\delta} > \tau\delta$}
    \STATE $k \gets k+1$
    \STATE $n \gets b(k)$
    \STATE Update $f_{n,k}^{\delta}$ and $J_{n,k}^{\delta}$ by resolving \eqref{OPT-ENN1}
           on the expanded class of neural networks $X_{n,r(n)}$
\ENDWHILE

\STATE Set $k(\delta) \gets k$, $n(\delta) \gets n$
\RETURN $f_{n(\delta),k(\delta)}^{\delta}$, $n(\delta)$, $k(\delta)$
\end{algorithmic}
\end{algorithm}

The update rule $n \gets b(k)$ in Algorithm \ref{alg:expanding_nn} provides a flexible strategy for increasing the network complexity. Typical choices include $b(k) = k + 1$ (linear growth) or $b(k) = k + m$ for some fixed constant $m$ (fixed-step growth). Below $b^{-1}$ denotes the (generalized) inverse of $b$ defined by $b^{-1}(t):=\min\{k\in\mathbb N^+: b(k)>t\}$. The approximate solution $f_{n,k}^{\delta}$ is obtained by solving a minimization problem over a bounded compact set $X_{n,r(n)}$, which guarantees the existence of a regularized solution. The set $X_{n,r(n)}$ expands with increasing $n$, so the algorithm yields wider two layer networks until it is terminated by  Morozov's discrepancy principle. 

Now, we give the convergence properties of Algorithm \ref{alg:expanding_nn} in Theorem~\ref{main}. If the upper bound of $c_{\rho}(f^{\dagger})$ for $f^\dag$ is known and $r(n)$ satisfies mild conditions, then the sequence generated by Algorithm \ref{alg:expanding_nn} terminates after a finite number of steps and converges to $f^\dag$. Moreover, we derive asymptotic estimates on the stopping index and the number of neurons, which are directly linked to the degree of H\"older continuity of $A$.
\begin{definition}
\label{Holdercontinuous}
The operator $A$ is said to be locally H\"older continuous at $f^\dagger$ of order
$\theta\in(0,1]$ with respect to the $H^1$-norm if there exist
$L_A>0$ and $\gamma>0$ such that
\[
  \|A(f)-A(f^\dagger)\|_{\mathcal Y}
  \le L_A \, \|f-f^\dagger\|_{H^1(\Omega)}^{\theta},
  \quad\forall f \text{ with } \|f-f^\dagger\|_{H^1(\Omega)}<\gamma.
\]
The case $\theta=1$ reduces to local Lipschitz continuity.
\end{definition}

\begin{theorem}
\label{main}
Let $A:L^2(\Omega)\to\mathcal Y$ be continuous and injective. 
Let $\{r(n)\}_{n\ge1}$ be a positive nondecreasing sequence such that
\[
r(n)\nearrow B
\quad\text{as } n\to\infty,
\qquad
B>c_\rho(f^\dagger).
\]
Then the following statements hold.
\begin{itemize}
\item (Well-posedness of Algorithm \ref{alg:expanding_nn}) For any $\delta >0$, the iteration process in Algorithm \ref{alg:expanding_nn} terminates after a finite number $k(\delta)$ steps with finitely many $n(\delta)$ neurons, i.e. $k(\delta)<+\infty,n(\delta)<+\infty$. Moreover, if $A$ is locally H\"older continuous at $f^\dagger$ of order $\theta$, then there exists $C>0$, independent of $\delta$, such that
\begin{equation*}
    k(\delta) \le b^{-1}\!\big(C\,\delta^{-2/\theta}\big),
\qquad
n(\delta) = \mathcal{O}\big(\delta^{-2/\theta}\big)\footnote{
  Throughout the paper we write $f(\delta)=\mathcal{O}(g(\delta))$ as $\delta\to0$
  if there exists a constant $C>0$, independent of $\delta$, such that
  $|f(\delta)| \le C\,g(\delta)$ for all sufficiently small $\delta>0$.},
\quad \text{as } \delta\to0.
\end{equation*}
\item (Regularizing property of Algorithm \ref{alg:expanding_nn}) The approximate solutions $f_{n(\delta),k(\delta)}^{\delta}$ converge to $f^{\dagger}$ in $C(K)$ as $\delta \to 0$.
\end{itemize}
 \end{theorem}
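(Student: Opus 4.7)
My strategy is to reduce both claims to the approximation result (Proposition \ref{Approximtion theorem}) together with the sequential compactness in Lemma \ref{convergesubsq}; injectivity of $A$ is used only at the very end to identify the limit.

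For the well-posedness part, I would build an explicit competitor. Since $r(n)\to B>c_\rho(f^\dagger)$, for $n$ sufficiently large we have $r(n)\ge c_\rho(f^\dagger)$, hence Proposition \ref{Approximtion theorem} supplies $\tilde f_n\in X_{n,r(n)}$ with $\|\tilde f_n-f^\dagger\|_{H^1}\le C(\Omega,d)\|f^\dagger\|_{\mathcal B_1}/\sqrt n$. For $n$ large enough this forces $\tilde f_n$ into the local H\"older neighborhood of $f^\dagger$, so
\[
  \|A(\tilde f_n)-g^\delta\|_{\mathcal Y}\;\le\; L_A\bigl(C/\sqrt n\bigr)^\theta\|f^\dagger\|_{\mathcal B_1}^\theta+\delta.
\]
Since $f_{n,k}^\delta$ minimizes the discrepancy on $X_{n,r(n)}$, the residual $J_{n,k}^\delta$ is bounded by the same right-hand side. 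Requiring the H\"older term to be at most $(\tau-1)\delta$ gives $n\ge C'\delta^{-2/\theta}$ for an explicit constant $C'$, so the stopping criterion $J_{n,k}^\delta\le\tau\delta$ must trigger at some finite $k$. The hypothesis $b(k+1)\le C_b\,b(k)$ then yields $n(\delta)=b(k(\delta))\le C_bC'\delta^{-2/\theta}$ and $k(\delta)\le b^{-1}\bigl(C'\delta^{-2/\theta}\bigr)$, matching the stated bounds.

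For the regularization property, I would first record the uniform Barron bound $\|f_{n(\delta),k(\delta)}^\delta\|_{\mathcal B_1}\le r(n(\delta))\le B$; this is immediate from the empirical representation of $X_{n,r(n)}$, since every atom satisfies $|a_j|(\|\boldsymbol b_j\|_1+|c_j|)=|a_j|\le r(n)$. Given any sequence $\delta_j\to 0$, Lemma \ref{convergesubsq} extracts a sub-subsequence converging in $H^1$ to some $\tilde f$. The stopping rule combined with the noise bound \eqref{NoiseLevel} gives $\|A(f_{n(\delta_j),k(\delta_j)}^{\delta_j})-g\|_{\mathcal Y}\le(\tau+1)\delta_j\to 0$; continuity of $A$ identifies $A(\tilde f)=g=A(f^\dagger)$, and injectivity then forces $\tilde f=f^\dagger$. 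Since the limit is the same along every subsequence, a standard argument upgrades this to convergence of the whole family $f_{n(\delta),k(\delta)}^\delta\to f^\dagger$ in $H^1$.

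The main obstacle I foresee is handling the interplay between the global continuity of $A$ (used for the limit identification) and the local H\"older continuity at $f^\dagger$ (used for the rates): the rate estimate requires that the competitor $\tilde f_n$ actually lies in the H\"older neighborhood, which must be argued as an explicit consequence of the approximation error vanishing like $1/\sqrt n$. A secondary technical point is verifying that the minimizer in \eqref{OPT-ENN1} exists, which follows from Lemma \ref{lemma compact set} and continuity of $A$, but should be checked before the rest of the argument is launched.
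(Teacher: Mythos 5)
Your proposal follows essentially the same route as the paper's proof in Appendix~A: an explicit competitor from Proposition~\ref{Approximtion theorem} placed in $X_{n,r(n)}$ once $r(n)\ge c_\rho(f^\dagger)$, the minimization property to bound $J_{n,k}^\delta$ by the competitor's residual, the discrepancy principle plus the doubling condition on $b$ to get $n(\delta)=\mathcal{O}(\delta^{-2/\theta})$ and $k(\delta)\le b^{-1}(C\delta^{-2/\theta})$, and then the uniform bound $\|f^\delta_{n(\delta),k(\delta)}\|_{\mathcal B_1}\le B$ with Lemma~\ref{convergesubsq} and injectivity to identify the $H^1$ limit (the paper phrases this last step as a contradiction argument rather than your subsequence principle, which is equivalent). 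The only point to tidy up is that the bare finiteness of $k(\delta)$ is asserted for merely continuous $A$, so you should run your competitor argument once with plain continuity at $f^\dagger$ (as the paper does, via $\varepsilon_1(\delta)$) before specializing to the H\"older case for the rates.
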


\begin{proof}By Lemma~\ref{lemma compact set}, $X_{n,r(n)}$ is sequentially compact in $L^2(\Omega)$. Since $L^2(\Omega)$ is a metric space, $X_{n,r(n)}$ is compact in $L^2(\Omega)$. Moreover, since $A:L^2(\Omega)\to\mathcal Y$ is continuous, the functional $J(f)=\|A(f)-g^{\delta}\|_{\mathcal{Y}}$ is continuous on $X_{n,r(n)}$. Then by Weierstrass theorem, $J$ attains its minimum on $X_{n,r(n)}$. (The minimizer may be nonunique for a non-convex functional $J$.)

Since $r(n)\nearrow B$ and $B>c_\rho(f^\dagger)$, there exists $N_1$ such that $r(n)>c_\rho(f^\dagger)$ for all $n\ge N_1$. Since $A$ is continuous at $f^\dagger$ with respect to the $L^2(\Omega)$-norm, for each $\delta>0$ there exists $\varepsilon_1(\delta)>0$ such that
\[
\|A(f)-A(f^\dagger)\|_{\mathcal Y}\le (\tau-1)\delta
\quad\text{whenever}\quad
\|f-f^\dagger\|_{L^2(\Omega)}\le \varepsilon_1(\delta).
\]
Let $N=N(\delta)=\max\{N_1,(\frac{C(\Omega,d)\|f^{\dagger}\|_{\mathcal{B}_1}}{\varepsilon_1(\delta)})^2,b(k_0)\}$. For every $n>N$, Proposition~\ref{Approximtion theorem} gives
$f_n\in X_{n,c_\rho(f^\dagger)}\subset X_{n,r(n)}$ such that $\|f^\dagger-f_n\|_{H^1(\Omega)}<\varepsilon_1(\delta)$. Hence also $\|f^\dagger-f_n\|_{L^2(\Omega)}<\varepsilon_1(\delta)$. 

Let $k^{*}:=\min\{k\in \mathbb{N}^{+}|b(k)>N\}$ and $\overline N:=b(k^*)$. Then $\overline N>N$. Since $N\ge b(k_0)$ and $b$ is nondecreasing, we have $k^*>k_0$. Hence $k^*-1\ge k_0$, and by the minimality of $k^*$, $b(k^*-1)\le N<b(k^*)=\overline N$. Let $f_{\overline N}\in X_{\overline N,c_\rho(f^\dagger)}
\subset X_{\overline N,r(\overline N)}$ be the approximating network constructed above with $n=\overline N$. Now we show that $n(\delta)\le b(k^*),k(\delta)\le k^*$. Indeed, since $f_{\overline N,k^*}^{\delta}$ is a minimizer over $X_{\overline N,r(\overline N)}$ and $f_{\overline N}\in X_{\overline N,r(\overline N)}$, we have
\begin{align}
J_{\overline{N},k^*}^{\delta} &=\left\|A(f_{\overline{N},k^*}^{\delta})-g^{\delta}\right\|_{\mathcal{Y}} \leq \left\|A(f_{\overline{N}})-g^{\delta}\right\|_{\mathcal{Y}}  \nonumber \\
\label{JNlestaudel}
&\leq \left\|A(f_{\overline{N}})-A(f^\dagger)\right\|_{\mathcal{Y}} + \left\|A(f^\dagger)-g^{\delta}\right\|_{\mathcal{Y}} \leq(\tau-1)\delta + \left\|g-g^{\delta}\right\|_{\mathcal{Y}} \le \tau\delta.
\end{align}
Therefore, the discrepancy principle is satisfied at the pair $(\overline N,k^*)$. By the definitions of $n(\delta)$ and $k(\delta)$ in Algorithm~\ref{alg:expanding_nn}, we obtain $n(\delta)\le \overline N=b(k^*),k(\delta)\le k^*$. Moreover, since $k^*-1\ge k_0$ and $b(k^*-1)\le N(\delta)<b(k^*)$, the growth condition $b(k^*)\le C_b b(k^*-1)$ yields $n(\delta)\le C_b N(\delta)$.
Since $N(\delta)=\mathcal O\left((1/\varepsilon_1(\delta))^2\right)$, we conclude that $n(\delta)=\mathcal O\left((1/\varepsilon_1(\delta))^2\right)$. Moreover, since there exists a constant $C>0$ independent of $\delta$ such that $N(\delta)\le C\left(1/{\varepsilon_1(\delta)}\right)^2$ for all sufficiently small $\delta$, and since $b^{-1}$ is nondecreasing, we obtain
$k(\delta)\le k^*
= b^{-1}(N(\delta))
\le
b^{-1}\!\left(C\left(1/{\varepsilon_1(\delta)}\right)^2\right)$.

Thus Algorithm~\ref{alg:expanding_nn} terminates after at most $b^{-1}\!\left(C\left(1/{\varepsilon_1(\delta)}\right)^2\right)$ iterations, with at most $n(\delta)=\mathcal O\left(\left(1/{\varepsilon_1(\delta)}\right)^2\right)$ neurons. Moreover, if $A$ is locally H\"older continuous of order $\theta$ at $f^\dagger$ with respect to the $H^1$-norm, then, for sufficiently small $\delta$, we may take $\varepsilon_1(\delta)=
\left(\frac{(\tau-1)\delta}{L_A}\right)^{1/\theta}
<\gamma$. 
Therefore, $n(\delta)=\mathcal O(\delta^{-2/\theta}),k(\delta)\le b^{-1}\!\left(C\delta^{-2/\theta}\right)$.

Finally, we prove the convergence of  $f_{n(\delta),k(\delta)}^\delta$ to $f^\dagger$ in $C(K)$ as $\delta\to0$ by contradiction. Assume that the assertion is false. Then there exist $\varepsilon>0$ and a sequence $\delta_k\searrow0$ such that
\begin{equation}
    \label{contr_InEq1}
    \left\|
f_{n(\delta_k),k(\delta_k)}^{\delta_k}-f^\dagger
\right\|_{C(K)}
\ge \varepsilon .
\end{equation}
For each $k$, $f_{n(\delta_k),k(\delta_k)}^{\delta_k}\in X_{n(\delta_k),r(n(\delta_k))}$, denote its outer coefficients by $\{a_j^{(k)}\}_{j=1}^{n(\delta_k)}$. Then, by the definition of $X_{n,r}$, we have $q_k:=
\frac{1}{n(\delta_k)}
\sum_{j=1}^{n(\delta_k)}
|a_j^{(k)}|
\le r(n(\delta_k))
\le B$. By Lemma~\ref{lem:barron_compact_lsc}, there exists a subsequence, denoted by $\{f_{n(\delta_{k_i}),k(\delta_{k_i})}^{\delta_{k_i}}\}$, and some $f^*\in C(K)\cap\mathcal B_1$ such that $f_{n(\delta_{k_i}),k(\delta_{k_i})}^{\delta_{k_i}}\to f^*$ in $C(K)$. In particular, the convergence also holds in $L^2(\Omega)$. By the discrepancy principle and the noise assumption,
\[
\|A(f_{n(\delta_{k_i}),k(\delta_{k_i})}^{\delta_{k_i}})-g\|_{\mathcal Y}
\le
\|A(f_{n(\delta_{k_i}),k(\delta_{k_i})}^{\delta_{k_i}})-g^{\delta_{k_i}}\|_{\mathcal Y}
+
\|g^{\delta_{k_i}}-g\|_{\mathcal Y}
\le
(\tau+1)\delta_{k_i}\to0.
\]
Since $f_{n(\delta_{k_i}),k(\delta_{k_i})}^{\delta_{k_i}}\to f^*$ in $L^2(\Omega)$ and $A:L^2(\Omega)\to\mathcal Y$ is continuous, we obtain $\|A(f^*)-g\|_{\mathcal Y}=0$. Hence $A(f^*)=g$. Since $A$ is injective, we get $f^*=f^\dagger$ in $L^2(\Omega)$. Since both $f^*$ and $f^\dagger$ are continuous on $K$, this equality holds pointwise on $K$. Therefore, $f^*=f^\dagger$ in $C(K)$, and hence $f_{n(\delta_{k_i}),k(\delta_{k_i})}^{\delta_{k_i}}\to f^\dagger$ in $C(K)$, which contradicts \eqref{contr_InEq1}. Therefore, $f_{n(\delta),k(\delta)}^\delta \to f^\dagger$ in $C(K)$ as $\delta\to0$.
\end{proof}

\begin{remark}
For typical choices of $b=b(k)$ we obtain explicit rates. For instance,
if $b(k) \asymp k^{p}$\footnote{We write $f\asymp g$ if there exist constants
$0<c\le C<\infty$, independent of the argument, such that
$cg(x)\le f(x)\le Cg(x)$ for all $x$ in the domain under consideration..} for some $p>0$, then $k(\delta) = \mathcal{O}\big(\delta^{-2/(p\theta)}\big)$; 
if $b(k) \asymp e^{q k}$ for some $q>0$, then $k(\delta) = \mathcal{O}\big(\log(1/\delta)\big)$. 
\end{remark}

\subsection{Case 2: without knowledge of \texorpdfstring{$c_{\rho}(f^{\dagger})$}.}

Now, we consider the inverse problem without any prior knowledge of the quantity $c_{\rho}(f^{\dagger})$. To compensate for this, our analysis requires the forward operator $A$ to satisfy the $\theta$-degree H\"older continuity as given in Definition \ref{Holdercontinuous}, and the introduction of a penalty term $\mathcal{R}$ in the corresponding optimization problem. This additional term $\mathcal{R}$ ensures the uniform boundedness of the minimizer throughout the iterations. To this end, we present the following algorithm, which closely resembles Algorithm \ref{alg:expanding_nn}.

\begin{algorithm}[H]
\caption{ENN2: a modified expanding neural network method for solving equation (\ref{1})}
\label{alg:modified_enn}
\begin{algorithmic}[1]
\REQUIRE Adaptive increasing sequence of neural networks $b(k)$, radius map $r(n)$, discrepancy parameter $\tau>1$, regularization parameters $c_0>0$, $\theta>0$, regularization penalty $\mathcal{R}$, noise level $\delta$
\ENSURE Reconstructed solution $f_{n(\delta),k(\delta)}^{\delta}$,
        suboptimal network size $n(\delta)$, iteration number $k(\delta)$

\STATE \textbf{Initialization:} $k \gets 1$, $n \gets b(1)$
\STATE Set $\beta_n = c_0 n^{-\theta/2}$ and compute
  \begin{equation}
  \label{OPT-ENN2}
      f_{n,k}^{\delta}
      =
      \arg\min\limits_{f \in X_{n,r(n)}}
      \Big( \left\|A(f)-g^{\delta}\right\|_{\mathcal{Y}} + \beta_n \mathcal{R}(f) \Big),
      \quad
      J_{n,k}^{\delta}
      = \left\|A(f_{n,k}^{\delta})-g^{\delta}\right\|_{\mathcal{Y}}.
  \end{equation}

\WHILE{$J_{n,k}^{\delta} > \tau\delta$}
    \STATE $k \gets k+1$
    \STATE $n \gets b(k)$
    \STATE Update $\beta_n = c_0 n^{-\theta/2}$
    \STATE Update $f_{n,k}^{\delta}$ and $J_{n,k}^{\delta}$ by resolving \eqref{OPT-ENN2}
          on the expanded class of neural networks $X_{n,r(n)}$
\ENDWHILE

\STATE Set $n(\delta)\gets n$, $k(\delta)\gets k$
\RETURN $f_{n(\delta),k(\delta)}^{\delta}$, $n(\delta)$, $k(\delta)$
\end{algorithmic}
\end{algorithm}

The corresponding convergence properties of Algorithm \ref{alg:modified_enn}, including its well-posedness and regularization behavior, are established in Theorem~\ref{main2}. The result demonstrates that this modified algorithm can produce a sequence of regularized solutions converging to the exact solution in the appropriate function space, without requiring additional conditions on $r(n)$. 

\begin{theorem}
\label{main2}
 Suppose $g^{\delta}\in \mathcal{Y}$ for any $\delta>0$, and the forward operator $A$ be a continuous injective operator that is locally H\"older continuous at $f^{\dagger}$ of order $\theta \in (0,1]$ (as in Definition~\ref{Holdercontinuous}) with constants $L_A > 0$ and $\gamma_1 > 0$. Let $r:\mathbb N\to(0,\infty)$ be a nondecreasing sequence satisfying $r(n)\to+\infty$ as $n\to\infty$. Assume one of the following holds for the regularizer $\mathcal{R}$:
\begin{itemize}
\item[\textnormal{(i)}] 
$\mathcal{R}=\mathcal{R}_H$ is weakly lower semicontinuous in $H^1(\Omega)$.
It is also locally H\"older continuous at $f^\dagger$ of order $\theta$, with
constants $L_{\mathcal{R}_H}>0$ and $\gamma_2>0$. Moreover, there exists
$C_{\mathcal{R}_H}>0$ such that $C_{\mathcal{R}_H}\|f\|_{H^1(\Omega)}
\le
\mathcal{R}_H(f)
<+\infty$.
\item[\textnormal{(ii)}] \(\mathcal{R}=\mathcal{R}_B\), where for a parametrized network $f_n(\mathbf x)
=
\frac1n\sum_{j=1}^n
a_j(\boldsymbol b_j^T\mathbf x+c_j)_+$ with $\|\boldsymbol b_j\|_1+|c_j|=1$, we set $\mathcal R_B(f_n):=c_{\rho_n}(f_n)=
\frac1n\sum_{j=1}^n |a_j|$. Here \(\rho_n\) is the empirical measure associated with the chosen parametrization. 
\end{itemize}

Then, the following hold:
\begin{itemize}
\item (Well-posedness of Algorithm \ref{alg:modified_enn}) For any $\delta >0$ the iteration process in Algorithm \ref{alg:modified_enn} terminates after finitely many steps $k(\delta)$ with finite neurons $n(\delta)$, i.e. $k(\delta)<+\infty$ and $n(\delta)<+\infty$. Moreover, there exists a constant $C>0$, independent of $\delta$, such that
\begin{equation}
    \label{desire_estimate_kn}
    k(\delta) \le b^{-1}\!\big(C\,\delta^{-2/\theta}\big),
\qquad
n(\delta) = \mathcal{O}\big(\delta^{-2/\theta}\big),
\quad \text{as } \delta\to0.
\end{equation}

\item (Regularization property of Algorithm \ref{alg:modified_enn}) 
The approximate solutions generated by Algorithm~\ref{alg:modified_enn}
satisfy the following convergence properties as $\delta\to0$:
\[
f_{n(\delta),k(\delta)}^\delta \to f^\dagger
\quad\text{in }L^2(\Omega),
\qquad \text{if } \mathcal R=\mathcal R_H,
\]
and
\[
f_{n(\delta),k(\delta)}^\delta \to f^\dagger
\quad\text{in }C(K),
\qquad \text{if } \mathcal R=\mathcal R_B.
\]
\end{itemize}
\end{theorem}

\begin{proof}
Fix $n\in\mathbb N$ and $\delta>0$. Let
$\mathcal{L}_{n}^{\delta}(f)
:=\|A(f)-g^{\delta}\|_{\mathcal{Y}}+\beta_n \mathcal{R}(f)$ and $\beta_n = c_0 n^{-\frac{\theta}{2}}$. For $r(n)\in(0,\infty)$, we first establish the existence of minimizers for \eqref{OPT-ENN2} for both regularizers, $\mathcal{R}=\mathcal{R}_H$ and $\mathcal{R}=\mathcal{R}_B$.

Define $m_n^\delta := \inf_{f\in X_{n,r(n)}} \mathcal L_n^\delta(f).$ Choose a minimizing sequence $\{f_{n,k}\}_{k\in\mathbb N}\subset X_{n,r(n)}$ such that $\mathcal L_n^\delta(f_{n,k})\to m_n^\delta$ as $k\to\infty$. Pick any $f_0\in X_{n,r(n)}$ with $\mathcal L_n^\delta(f_0)<\infty$ (e.g. $f_0=0$), and without loss of generality assume
\[
\mathcal L_n^\delta(f_{n,k})\le \mathcal L_n^\delta(f_0)+1, \qquad \text{for all }k\in\mathbb N.
\]
This directly implies $\beta_n \mathcal R(f_{n,k}) \le \mathcal L_n^\delta(f_{n,k})\le \mathcal L_n^\delta(f_0)+1$, and hence
$
\mathcal R(f_{n,k}) \le \frac{\mathcal L_n^\delta(f_0)+1}{\beta_n}
= \frac{\mathcal L_n^\delta(f_0)+1}{c_0}\, n^{\theta/2}
=: C_{n,\delta} <\infty$.
We now distinguish two choices of $\mathcal R$.

\noindent\emph{Case A: $\mathcal R=\mathcal R_H$.} Since $\mathcal R_H(f)\ge C_{\mathcal R_H}\|f\|_{H^1(\Omega)}$, the bound $\mathcal R_H(f_{n,k})\le C_{n,\delta}$ implies that $\{f_{n,k}\}$ is bounded in $H^1(\Omega)$. Hence, there exists a subsequence (not relabeled) and some $f_n^*\in H^1(\Omega)$ such that
\[
f_{n,k}\rightharpoonup f_n^* \ \text{in }H^1(\Omega),
\qquad
f_{n,k}\to f_n^* \ \text{in }L^2(\Omega).
\]
Since $X_{n,r(n)}$ is closed in $L^2(\Omega)$ (noting that for $r(n)<\infty$, closedness follows from compactness), we have $f_n^*\in X_{n,r(n)}$. By the continuity of $f\mapsto \|A(f)-g^\delta\|_{\mathcal Y}$ with respect to $L^2(\Omega)$ and the weak lower semicontinuity of $\mathcal R_H$,
\[
\mathcal L_n^\delta(f_n^*)
\le \liminf_{k\to\infty} \mathcal L_n^\delta(f_{n,k})
= m_n^\delta,
\]
which shows that $f_n^*$ is a minimizer of $\mathcal L_n^\delta$ on $X_{n,r(n)}$ for $r(n)\in(0,\infty)$.

\noindent\emph{Case B: $\mathcal R=\mathcal R_B$.}
Write $f_{n,k}(\mathbf x)=\frac1n\sum_{j=1}^n a_j^{(k)}(\boldsymbol b_j^{(k)\,T}\mathbf x+c_j^{(k)})_+$ with
$\|\boldsymbol b_j^{(k)}\|_1+|c_j^{(k)}|=1$.
From $\mathcal R_B(f_{n,k})\le C_{n,\delta}$ and the definition of $\mathcal{R}_B$, we obtain
\[
|a_j^{(k)}|
\le n\,\mathcal R_B(f_{n,k})
\le nC_{n,\delta}
=:R_{n,\delta}
\qquad (j=1,\dots,n).
\]
Define the radius $\bar r_{n,\delta}:=\min\{r(n),R_{n,\delta}\}\in(0,\infty).$ Then $f_{n,k}\in X_{n,\bar r_{n,\delta}}$ for all $k$. By Lemma~\ref{lemma compact set} and its proof, there exist a subsequence $\{f_{n,k_\ell}\}$ and some $f_n^*\in X_{n,\bar r_{n,\delta}}\subset X_{n,r(n)}$ such that $f_{n,k_\ell}\to f_n^*$ in $C(K)$ with the associated parameters $\{(a_j^{(k)},\boldsymbol b_j^{(k)},c_j^{(k)})\}$ converging as well. Hence $\mathcal R_B(f_{n,k})\to \mathcal{R}_B(f_n^*)$.
Moreover, since $A:L^2(\Omega)\to\mathcal Y$ is continuous, it follows that $\|A(f_{n,k})-g^\delta\|_{\mathcal Y}
\to \|A(f_n^*)-g^\delta\|_{\mathcal Y}.$ Therefore,
\[
\mathcal L_n^\delta(f_n^*)
=
\lim_{k\to\infty}\mathcal L_n^\delta(f_{n,k})
=
\inf_{f\in X_{n,r(n)}}\mathcal L_n^\delta(f).
\]
Thus $f_n^*$ is a minimizer of $\mathcal L_n^\delta$ over $X_{n,r(n)}$.

\noindent Now, we show that for any $\delta >0$, $k(\delta)<+\infty$ and $n(\delta)<+\infty$. 
By Proposition~\ref{Approximtion theorem}, for $f^\dagger\in \mathcal{B}_1$, and any $\delta>0$, if
$n>N_1(\delta):=\max\{ (C(\Omega,d)\|f^{\dagger}\|_{\mathcal{B}_1})^2\cdot(\frac{3(L_A+L_{\mathcal{R}_H})}{(\tau-1)\delta})^{\frac{2}{\theta}},(\frac{C(\Omega,d)\|f^{\dagger}\|_{\mathcal{B}_1}}{\min\{\gamma_1,\gamma_2\}})^2\}$, there exists a two-layer network $f_{n}\in X_{n,c_{\rho}(f^{\dagger})}$ with $c_{\rho_n}(f_n)\le c_{\rho}(f^{\dagger})$ such that 
\begin{equation*}
\|f^{\dagger}-f_{n}\|_{H^1(\Omega)} <\min\left\{ \left(\frac{\tau -1}{3(L_A+L_{\mathcal{R}_H})} \delta\right)^{\frac{1}{\theta}},\gamma_1,\gamma_2\right\}.
\end{equation*} 

For the regularizer $\mathcal{R}=\mathcal{R}_H$, since the functional $\mathcal{R}_H(\cdot)$ is locally H\"older continuous at $f^\dagger$ of the same order $\theta$ with constants $L_{\mathcal{R}_H}>0$ and $\gamma_2>0$, since it holds $\|f^{\dagger}-f_n\|_{H^1(\Omega)}\le \gamma_2$ when $n\ge N_1$, we obtain
\begin{equation*}
|\mathcal{R}_H(f_{n})-\mathcal{R}_H(f^\dagger)| \le L_{\mathcal{R}_H}\|f^{\dagger}-f_{n}\|_{H^1(\Omega)}^{\theta} \le L_{\mathcal{R}_H} \frac{\tau -1}{3(L_A+L_{\mathcal{R}_H})} \delta.
\end{equation*} 
By the definition of $r(n)$, we have $r(n)\to+\infty$. Hence we define
\[
N_2(\delta)
:=
\max\left\{
c_0^{2/\theta},
r^{-1}\bigl(c_\rho(f^\dagger)\bigr),
b(k_0),
\left(
\frac{3c_0\mathcal R_H(f^\dagger)}
{(\tau-1)\delta}
\right)^{2/\theta},
\left(
\frac{3c_0c_\rho(f^\dagger)}
{2(\tau-1)\delta}
\right)^{2/\theta}
\right\},
\]
where $r^{-1}$ denotes the generalized inverse, i.e., $r^{-1}(N):= \inf \{ n \in \mathbb{N}:~ r(n) \ge N \}$.

Denote $N:=\max\{N_1(\delta),N_2(\delta)\}$. Then, for every integer $n\ge N$, we have
$$r(n)\ge c_{\rho}(f^{\dagger}),\quad c_0 n^{-\frac{\theta}{2}}\mathcal{R}_H(f^{\dagger})\le \frac{\tau -1}{3}  \delta,\quad c_0n^{-\frac{\theta}{2}}c_{\rho}(f^{\dagger})\le \frac{2(\tau-1)}{3}\delta,\quad c_0n^{-\theta/2}\le 1.$$
Furthermore, we have $N(\delta)=\mathcal O(\delta^{-2/\theta})$ as $\delta\to 0$. Equivalently, there exists a constant $C_{\max}>0$, independent of $\delta$, such that, for all sufficiently small $\delta$, $N(\delta)\le C_{\max}\delta^{-2/\theta}$.

These discussions indicate that for any $\delta>0$ and $n\ge N$, there exists a two-layer network $f_n\in X_{n,c_{\rho}(f^{\dagger})}\subset X_{n,r(n)}$ with $\mathcal{R}_B(f_n)=c_{\rho_n}(f_n)\le c_{\rho}(f^{\dagger})$ such that
\begin{equation*}
\|f^{\dagger}-f_n\|_{H^1(\Omega)} < \min\left\{ \left(\frac{\tau -1}{3(L_A+L_{\mathcal{R}_H})} \delta\right)^{\frac{1}{\theta}},\gamma_1,\gamma_2\right\}, \quad c_0n^{-\frac{\theta}{2}}\mathcal{R}_B(f_n)\le c_0n^{-\frac{\theta}{2}}c_{\rho}(f^{\dagger})\le \frac{2(\tau-1)}{3}\delta,
\end{equation*}
and
\begin{equation*}
    c_0n^{-\frac{\theta}{2}}\mathcal{R}_H(f_n)\le c_0 n^{-\frac{\theta}{2}}\left(  |\mathcal{R}_H(f^\dagger)| + | \mathcal{R}_H(f_n) - \mathcal{R}_H(f^\dagger)| \right)\le \frac{2(\tau-1)}{3}\delta.
\end{equation*}
Let $k^*:=\min\{k\in \mathbb{N}^{+}|b(k)>N\}$. Then, by definition, $b(k^*-1)\le N<b(k^*)$ and $k^*>k_0$. Thus, for $\overline{N}=b(k^*)>N$, $\mathcal{R}=\mathcal{R}_H$ or $\mathcal{R}=\mathcal{R}_B$. By the definition of $f_{\overline{N},k^*}$ in Algorithm \ref{alg:modified_enn}, and the locally H\"older-type continuity of $A$ at $f^{\dagger}$, we have 
\begin{align}
J_{\overline{N},k^*}^{\delta} &=\left\|A(f_{\overline{N},k^*}^{\delta})-g^{\delta}\right\|_{\mathcal{Y}} \leq \left\|A(f_{\overline{N}})-g^{\delta}\right\|_{\mathcal{Y}}+ c_0{\overline{N}}^{-\frac{\theta}{2}}\mathcal{R}(f_{\overline{N}}) \notag\\
&\leq L_A \left\|f_{\overline{N}}-f^\dagger\right\|_{H^1(\Omega)}^{\theta} + \left\|A(f^\dagger)-g^{\delta}\right\|_{\mathcal{Y}} + c_0{\overline{N}}^{-\frac{\theta}{2}}\mathcal{R}(f_{\overline{N}}) \notag\\
\label{Jnlessdelta}
&\leq L_A \frac{\tau -1}{3(L_A+L_{\mathcal{R}_H})} \delta +\left\|g-g^{\delta}\right\|_{\mathcal{Y}} + \frac{2(\tau-1)}{3}\delta \le \tau\delta.
\end{align}
By the definition of the stopping indices $n(\delta)$ and $k(\delta)$ in the algorithm, this implies $n(\delta)\le \overline{N}=  b(k^*)$ and $k(\delta)\le k^*$. Combining these with the monotonicity of $b$ and the bound $b(k^*) \le C_b b(k^*-1) \le C_b N$, and recalling that $N = \mathcal{O}(\delta^{-2/\theta})$, we finally obtain the desired estimate \eqref{desire_estimate_kn}.

    Next, we prove the regularizing property of the algorithm. Let $\delta_k\searrow0$. We first consider the case where $\{n(\delta_k)\}$ is bounded. Then, up to a subsequence, there exists $N_0\in\mathbb N$ such that $n(\delta_k)\le N_0$ for all $k$. Hence $f_{n(\delta_k),k(\delta_k)}^{\delta_k} \in\bigcup_{n=1}^{N_0}X_{n,r(n)}$. Since each $X_{n,r(n)}$ is compact in $C(K)$ by Lemma~\ref{lemma compact set}, the finite union above is compact in $C(K)$, and hence also in $L^2(\Omega)$. Therefore, after passing to a subsequence, there exists $f^*\in C(K)$ such that $f_{n(\delta_k),k(\delta_k)}^{\delta_k}\to f^*$ in $C(K)$, and consequently also in $L^2(\Omega)$. By the discrepancy principle and the noise assumption, 
\[ \|A(f_{n(\delta_k),k(\delta_k)}^{\delta_k})-g\|_{\mathcal Y} \le \|A(f_{n(\delta_k),k(\delta_k)}^{\delta_k})-g^{\delta_k}\|_{\mathcal Y} + \|g^{\delta_k}-g\|_{\mathcal Y} \le (\tau+1)\delta_k\to0. \] 
Since $A:L^2(\Omega)\to\mathcal Y$ is continuous, we obtain $A(f^*)=g$. The injectivity of $A$ gives $f^*=f^\dagger$. Thus the desired convergence follows in this case.

It remains to consider the case where $\{n(\delta_k)\}$ is unbounded. Passing to a subsequence if necessary, we may assume that $n(\delta_k)\to\infty$. In this case, by Proposition~\ref{Approximtion theorem}, for each $n(\delta_k)$, there exists a two-layer network $f_{n(\delta_k)}\in X_{n(\delta_k),c_\rho(f^\dagger)}$ such that
\begin{equation}
    \label{appresult_ndelta}
    \left\|f^{\dagger}-f_{n(\delta_k)}\right\|_{H^1(\Omega)}\leq \frac{C(\Omega, d)}{\sqrt{n(\delta_k)}}\|f^{\dagger}\|_{\mathcal{B}_1},\qquad c_{\rho_{n(\delta_k)}}(f_{n(\delta_k)})\le c_\rho(f^\dagger).
\end{equation}
Since $n_k\to\infty$ and $r(n)\to\infty$, for all sufficiently large $k$, $f_{n(\delta_k)}\in X_{n(\delta_k),c_\rho(f^\dagger)} \subset X_{n(\delta_k),r(n(\delta_k))}$, and the local H\"older continuity assumptions on $A$ and $\mathcal R_H$ are applicable to $f_{n(\delta_k)}$.

We first consider the case $\mathcal{R} = \mathcal{R}_H$. By the minimizing property of $f_{n(\delta_k),k(\delta_k)}^{\delta_k}$, the coercivity $\mathcal{R}_H(f)\ge C_{\mathcal{R}_H}\|f\|_{H^1(\Omega)}$, \eqref{appresult_ndelta}, and the local H\"older continuity of $A$ and $\mathcal R_H$, we have
\begin{align*}
&c_0 C_{\mathcal{R}_H}\, (n(\delta_k))^{-\frac{\theta}{2}} \, \| f_{n(\delta_k),k(\delta_k)}^{\delta_k} \|_{H^1(\Omega)}
 \le \| A(f_{n(\delta_k)}) - g^{\delta_k} \|_{\mathcal{Y}} + c_0 \, (n(\delta_k))^{-\frac{\theta}{2}} \, \mathcal{R}_H(f_{n(\delta_k)}) \\
&\quad \le \| A(f_{n(\delta_k)}) - A(f^\dagger) \|_{\mathcal{Y}} + \| A(f^\dagger) - g^{\delta_k} \|_{\mathcal{Y}}
+ c_0 (n(\delta_k))^{-\frac{\theta}{2}} \bigl( | \mathcal{R}_H(f^\dagger) | + | \mathcal{R}_H(f_{n(\delta_k)}) - \mathcal{R}_H(f^\dagger) | \bigr) \\
&\quad \le \Bigl( L_A ( C(\Omega, d) \| f^{\dagger} \|_{\mathcal{B}_1} )^{\theta} + c_0 | \mathcal{R}_H(f^\dagger) | + c_0 L_{\mathcal{R}_H} ( C(\Omega, d) \| f^{\dagger} \|_{\mathcal{B}_1} )^{\theta} \Bigr) (n(\delta_k))^{-\frac{\theta}{2}} + \delta_k.
\end{align*}
Since $n(\delta_k) \le C_b N(\delta_k)$, and $ N(\delta_k)\le C_{\max}\delta_k^{-2/\theta}$ for all sufficiently small $\delta_k$, it follows that $\delta_k (n(\delta_k))^{\theta/2} \le (C_bC_{\max})^{\theta/2}$. 
Hence $\{f_{n(\delta_k),k(\delta_k)}^{\delta_k}\}$ is uniformly bounded in $H^1(\Omega)$ for all sufficiently small $\delta_k$. Therefore, by Lemma~\ref{convergesubsq}, there exists a subsequence, still denoted by $\{f_{n(\delta_k),k(\delta_k)}^{\delta_k}\}$, and some $f^*\in L^2(\Omega)$ such that $f_{n(\delta_k),k(\delta_k)}^{\delta_k}\to f^*$ in $L^2(\Omega)$. The discrepancy principle, the noise assumption, and the continuity of $A$ imply $A(f^*)=g$. By injectivity, $f^*=f^\dagger$. Hence $f_{n(\delta),k(\delta)}^\delta \to f^\dagger$ in $L^2(\Omega)$ as $\delta\to 0$.

For $\mathcal R=\mathcal R_B$, using \eqref{appresult_ndelta}, the minimizing property, the local H\"older continuity of $A$, $\mathcal R_B(f_{n(\delta_k)})=c_{\rho_n}(f_{n(\delta_k)})\le c_\rho(f^\dagger)$, and the bound $\delta_k (n(\delta_k))^{\theta/2} \le (C_bC_{\max})^{\theta/2}$, we obtain
\begin{align*}
\mathcal R_B(f_{n(\delta_k),k(\delta_k)}^{\delta_k})
&\le
\frac{1}{c_0}n(\delta_k)^{\frac{\theta}{2}}
\|A(f_{n(\delta_k)})-g^{\delta_k}\|_{\mathcal Y}
+\mathcal R_B(f_{n(\delta_k)}) \\
&\le
\frac{1}{c_0}
\left[
L_A(C(\Omega,d)\|f^\dagger\|_{\mathcal B_1})^\theta
+
(C_bC_{\max})^{\frac{\theta}{2}}
\right]
+c_\rho(f^\dagger).
\end{align*}
Hence $\mathcal R_B(f_{n(\delta_k),k(\delta_k)}^{\delta_k})$ are uniformly bounded for all sufficiently small $\delta_k$. By the definition of \(\mathcal R_B\), $q_k:=n(\delta_k)^{-1}\sum_{j=1}^{n(\delta_k)}|a_j^{(k)}|$ are also uniformly bounded. By Lemma~\ref{lem:barron_compact_lsc}, there exists a subsequence, still denoted by $\{f_{n(\delta_k),k(\delta_k)}^{\delta_k}\}$, and some $f^*\in C(K)\cap\mathcal B_1$ such that $f_{n(\delta_k),k(\delta_k)}^{\delta_k}\to f^*$ in $C(K)$. As above, the limit must be $f^\dagger$. Since both $f^*$ and $f^\dagger$ are continuous on $K$, the equality also holds in $C(K)$. Therefore, $f_{n(\delta),k(\delta)}^\delta
\to f^\dagger$ in $C(K)$ as $\delta\to 0$.
\end{proof}

Theorems 1 and 2 indicate that the proposed algorithms provide a constructive strategy for building neural networks that serve as regularization solutions, offering explicit answers to the foundational questions (\textbf{Q1}-\textbf{Q3}) from the introduction:
\begin{itemize}
    \item \textbf{Answer to Q1:} The proposed algorithms construct neural network approximations that yield stable regularized solutions for ill-posed inverse problems. The discrepancy principle provides an \textit{a posteriori} stopping rule based on the noisy data, and the convergence results show that the selected approximations converge to the exact solution as the noise level tends to zero. In this sense, the selected neural network acts as a regularized solution.
    \item \textbf{Answer to Q2:} The theorems quantify how the sufficient network width depends on the noise level. In particular, when the Barron-space approximation error has order \(\mathcal O(n^{-1/2})\) and the forward operator is locally H\"older continuous with exponent \(\theta\), the sufficient width can be chosen as $n(\delta)=\mathcal O(\delta^{-2/\theta})$. This shows that the network size should be coupled with the noise level rather than enlarged independently. The numerical results in Section~\ref{simulation} further illustrate that, for noisy data, the reconstruction error need not decrease monotonically as the number of neurons increases. \item \textbf{Answer to Q3:} The expanding neural network methodology increases the number of neurons gradually and stops at the first architecture, along the prescribed expansion path, that satisfies the discrepancy principle. Thus the algorithm automatically selects a sufficiently small and stable network size, balancing approximation accuracy, stability, and computational complexity.
\end{itemize}

\section{A neural network-based Tikhonov regularization scheme}
\label{Tikhonov}

This section addresses a method different from the expanding neural network scheme discussed earlier. In the previous framework, the number of neurons $n$ serves as an implicit regularization parameter, making the quality of the solution highly sensitive to the choice of stopping criterion. By contrast, the variational regularization framework developed here incorporates an explicit parameter $\alpha$, providing more direct and stable control over the solution. While by the general principles of variational regularization \cite{engl1996regularization}, our theory is established independently within the neural network setting. We propose a neural network-based Tikhonov regularization scheme (Tikhonov-NN) to address the ill-posed problem \eqref{1}. Within this framework, we establish convergence of the regularized solutions and, under standard variational source conditions, derive explicit convergence rates that characterize the efficiency of the method. The core idea of our scheme is to seek two layer neural networks as the regularized solution that are obtained as the minimizer of the functional
\begin{equation}
\label{Tikfunc}
f_{n,\alpha}^{\delta}:=\arg\min_{f\in X_{n,\infty}}\mathcal{T}_\alpha(f,g^{\delta}),\quad  \mathcal{T}_\alpha(f,g^{\delta}):=\mathcal{D}(A(f),g^{\delta})+\alpha \mathcal{R}_n(f),
\end{equation}
where $X_{n,\infty}$ is defined in \eqref{eq:Xninfty_union}, $\mathcal{D}:(\mathcal{Y},\mathcal{Y}) \to \mathbb{R}_+$ is an appropriate similarity measure in the data space enforcing data consistency, $\alpha>0$ is the regularization parameter and $\mathcal{R}_n(f):X_{n,\infty}\to\mathbb{R}_+$ is the regularization stabilizer term, which is designed here by
\begin{equation}
\label{penalty}
\mathcal{R}_n(f):= \varphi(c_{\rho_n}(f)), \quad  c_{\rho_n}(f)\equiv \frac{1}{n} \sum_{i=1}^n\left|a_i\right|(\left\|\boldsymbol{b_i}\right\|_1+\left|c_i\right|)=\frac{1}{n} \sum_{i=1}^n\left|a_i\right|,
\end{equation}
where $\varphi: \mathbb{R}_+\to \mathbb{R}_+$ denotes the feature selection function and \(\rho_n\) is the empirical measure associated with the chosen parametrization. To this end, we summarize the main assumptions underlying Tikhonov regularization \eqref{Tikfunc}. These assumptions are standard within the regularization theory of abstract inverse problems formulated in normed spaces; see, for instance, \cite{TikhonovYagola1998}.

\begin{assumption}
\label{condition}
\begin{itemize}
    \item[(A1)] Data consistency term $\mathcal{D}:\mathcal{Y}\times \mathcal{Y} \to \mathbb{R}_+$ and forward operator $A$ satisfy the following six conditions: 
    \begin{enumerate}
        \item For some $\tau_D \geq 1$ we have $\forall g_0, g_1, g_2 \in \mathcal{Y}: \mathcal{D}\left(g_0, g_1\right) \leq \tau_D \mathcal{D}\left(g_0, g_2\right)+\tau_D \mathcal{D}\left(g_2, g_1\right)$; 
        \item  $\forall g_0, g_1 \in \mathcal{Y}: \mathcal{D}\left(g_0, g_1\right)=0 \Longleftrightarrow g_0=g_1$;
        \item $\forall\left\{g_k \right\}^{\infty}_{k=1} \subset \mathcal{Y}: g_k \to g \Longrightarrow \mathcal{D}\left(g_k, g\right) \to 0$;
        \item For every fixed $h\in\mathcal Y$, $g_k\to g$ implies $\mathcal D(h,g_k)\to\mathcal D(h,g)$;
        \item The functional $(f,g)\mapsto \mathcal{D}(A(f),g)$ is sequentially lower semi-continuous with respect to $f$ and $g$;
        \item For any $f'\in \mathcal{B}_1$ and exact solution $f^*\in \mathcal{B}_1$ of $A(f)=g$, it holds that
        \begin{equation}
        \label{conditionA14}
            \mathcal{D}(A(f'), g)\le K_A\|f'-f^*\|_{H^1(\Omega)}^s
        \end{equation}
        for some constants $s,K_A\ge 0$.
    \end{enumerate}
    \item[(A2)] $\varphi:[0,\infty)\to[0,\infty)$ is continuous, nondecreasing, and satisfies $\varphi(t)\to\infty$ as $t\to\infty$.
\end{itemize}
\end{assumption}

Note that the squared norm $\| \cdot - \cdot \|_{L^2(\Omega)}^2$ satisfies Condition (A1) when $A$ is an $L_A$-Lipschitz continuous operator. Specifically, conditions (A1.1)-(A1.3) in Assumption~\ref{condition} are straightforward to verify, while (A1.4) and (A1.5) hold with $K_A = L_A^2$ and $s = 2$, as guaranteed by the Lipschitz continuity of $A$ and Proposition~\ref{proBarron}.

 We are now in a position to demonstrate that the Tikhonov-NN method is a well-posed and convergent scheme, characterized by existence, stability, and convergence. In particular, Theorem \ref{well-posedness} establishes that, by employing \eqref{Tikfunc}, one can construct a sequence that converges to a $\mathcal{R}$-minimizing solution defined in \eqref{deffdagger}, as $n \to \infty$ and $\delta \to 0$, provided that the regularization parameter is chosen appropriately as in \eqref{choicealpha}. 
\begin{theorem}[Well-posedness]
\label{well-posedness}
Let Assumption \ref{condition} be satisfied. The following assertions hold true:
    \begin{enumerate}
        \item[(a)] Existence: for each $n\in\mathbb{N}$, every $g \in \mathcal{Y}$ and every $\alpha>0$, there exists a minimizer $f_n^*$ of $\mathcal{T}_\alpha(f,g)$ in $X_{n,\infty}$.
        
        \item[(b)] Stability: let $n\in\mathbb{N}$ and $\alpha>0$ be fixed. If $g_k \to g$ as $k\to \infty$ and $f_{n,k} \in \arg\min_{f\in X_{n,\infty}}\mathcal{T}_\alpha(f,g_k)$, then there exists a subsequence $\{f_{n,k_{\ell}}\}_{\ell=1}^\infty$ and some $f_n^{*}\in \arg\min_{f\in X_{n,\infty}}\mathcal{T}_\alpha(f,g)$ such that
\[
    f_{n,k_{\ell}}\to f^{*}_n \quad \text{in } C(K), \quad \text{as }\ell\to \infty.
\]
        
        \item[(c)] Convergence: let $f \in \mathcal{B}_1$ and $g:=A(f)$. 
Let $\{g_k\}_{k=1}^{\infty}\subset\mathcal{Y}$ satisfy 
$\mathcal{D}\left(g, g_k\right) \leq \delta_k$ for some sequence 
$\delta_k \searrow 0$ as $k \to \infty$. 
For each $n,k\in\mathbb{N}$, let $\alpha_{k,n}>0$ be a regularization parameter and choose $f_{n,k} \in \arg\min_{f\in X_{n,\infty}}\mathcal{T}_{\alpha_{k,n}}(f,g_k)$. Assume that the parameter choice $\alpha_{k,n}=\alpha(\delta_k,n)$ satisfies
\begin{equation}\label{choicealpha}
    \alpha_{k,n} \to 0,
    \qquad
    \alpha_{k,n}^{-1} (\delta_k + n^{-s/2})\to 0
    \quad \text{as } k \to \infty,\ n\to\infty.
\end{equation}
Then, for every sequence $\{(n_\ell,k_\ell)\}_{\ell=1}^{\infty}$ satisfying $n_\ell\to\infty,k_\ell\to\infty$, there exists a subsequence of $\{f_{n_\ell,k_\ell}\}_{\ell=1}^{\infty}$,
still denoted by $\{f_{n_\ell,k_\ell}\}_{\ell=1}^{\infty}$ such that
\[
    f_{n_\ell,k_\ell} \to f^{\dagger} \quad\text{in } C(K),
    \quad\text{as } n,\ell\to\infty,
\]
where $f^{\dagger}$ is an $\mathcal{R}$-minimizing solution of \eqref{1}, i.e.
\begin{equation}\label{deffdagger}
    f^{\dagger} \in \arg \min \left\{
        \mathcal{R}(f):=\varphi(\|f\|_{\mathcal{B}_1})
        \ \big|\ f \in \mathcal{B}_1,\ A(f)=g
    \right\}.
\end{equation} 
\end{enumerate}
\end{theorem}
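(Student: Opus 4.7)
The plan is to establish the three claims in turn, leveraging the compactness of the bounded parameter sets $X_{n,R}$ (Lemma~\ref{lemma compact set}), the Barron--$H^{1}$--$L^{2}$ embedding chain (Lemma~\ref{convergesubsq}), and the quantitative approximation bound for two-layer networks (Proposition~\ref{Approximtion theorem}). The key coercivity mechanism in every part is that a bound on the stabilizer $\alpha\varphi(c_{\rho_n}(f))$ controls $c_{\rho_n}(f)=\tfrac{1}{n}\sum_j|a_j|$ (since $\|\boldsymbol b_j\|_1+|c_j|=1$ is enforced in $X_{n,\infty}$), and hence controls each coefficient $|a_j|$ via the continuity and monotonicity of $\varphi$ (A2).

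For Part~(a), I would take a minimizing sequence $\{f_n^m\}_m\subset X_{n,\infty}$; the uniform bound $\alpha\varphi(c_{\rho_n}(f_n^m))\le\mathcal{T}_\alpha(f_n^m,g)\le C$ confines it to some $X_{n,R}$ with finite $R$, and Lemma~\ref{lemma compact set} supplies a subsequence converging in $C(K)$ (hence in $L^{2}$) to a limit $f^{\star}_n$. Sequential lower semi-continuity of $\mathcal{D}(A(\cdot),g)$ (A1.4) together with continuity of $\varphi$ at the coefficient limit then makes $f^{\star}_n$ a minimizer. Part~(b) follows the same template: fixing any competitor $f'\in X_{n,\infty}$ and invoking $\mathcal{T}_\alpha(f_{n,k},g_k)\le\mathcal{T}_\alpha(f',g_k)$ with (A1.1) and (A1.3) keeps the right-hand side uniformly bounded as $g_k\to g$, the same coercivity traps $\{f_{n,k}\}$ in $X_{n,R}$, and a Lemma~\ref{lemma compact set} subsequence $f_{n,k_\ell}\to f^{\star}_n$ is extracted in $L^{2}(\Omega)$; lower semi-continuity on the left plus continuity of $g\mapsto\mathcal{T}_\alpha(\tilde f,g)$ on the right then identifies $f^{\star}_n$ as a minimizer of $\mathcal{T}_\alpha(\cdot,g)$.

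For Part~(c), the core regularization statement, I would begin by supplying a competitor via Proposition~\ref{Approximtion theorem}: for any admissible $f\in\mathcal{B}_1$ with $A(f)=g$, take $\hat f_n\in X_{n,c_\rho(f)}$ with $c_{\rho_n}(\hat f_n)\le c_\rho(f)\le(1+\epsilon)\|f\|_{\mathcal{B}_1}$ and $\|\hat f_n-f\|_{H^{1}(\Omega)}=O(n^{-1/2})$. Inserting $\hat f_n$ into the optimality inequality $\mathcal{T}_{\alpha_{k,n}}(f_{n,k},g_k)\le\mathcal{T}_{\alpha_{k,n}}(\hat f_n,g_k)$ and applying (A1.1), (A1.5) will give the twin estimates
\[
\mathcal{D}(A(f_{n,k}),g_k)=O(\delta_k+n^{-s/2}+\alpha_{k,n}),\qquad
\varphi(c_{\rho_n}(f_{n,k}))\le C\alpha_{k,n}^{-1}(\delta_k+n^{-s/2})+\varphi\bigl((1+\epsilon)\|f\|_{\mathcal{B}_1}\bigr).
\]
Under the parameter choice \eqref{choicealpha}, the first estimate drives $\mathcal{D}(A(f_{n,k}),g_k)\to 0$, while the second supplies a uniform bound on $\|f_{n,k}\|_{\mathcal{B}_1}\le c_{\rho_n}(f_{n,k})$. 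Lemma~\ref{convergesubsq} then extracts a subsequence $f_{n,k_\ell}\to f^{\dagger}$ in $L^{2}(\Omega)$; feasibility $A(f^{\dagger})=g$ is established via $\mathcal{D}(A(f_{n,k_\ell}),g)\le\tau\mathcal{D}(A(f_{n,k_\ell}),g_{k_\ell})+\tau\delta_{k_\ell}\to 0$ combined with (A1.4) and (A1.2). Finally, taking $\liminf$ in the second twin bound, using lower semi-continuity of $\|\cdot\|_{\mathcal{B}_1}$ along $L^{2}$-convergent subsequences, continuity of $\varphi$, and sending $\epsilon\downarrow 0$ will yield $\varphi(\|f^{\dagger}\|_{\mathcal{B}_1})\le\varphi(\|f\|_{\mathcal{B}_1})$ for every admissible $f$, so $f^{\dagger}$ is $\mathcal{R}$-minimizing.

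The hard part will be this last lower-semi-continuity step. The minimizers $f_{n,k}$ are represented by purely atomic measures $\rho_n$ on $n$ atoms, whereas the limit $f^{\dagger}$ may require a continuous representing measure, so lower semi-continuity of the Barron norm along $L^{2}$-convergent sequences is not automatic. My intended route is a weak-$*$ compactness argument on the signed measures $a\,\rho_n(\mathrm{d} a,\mathrm{d}\boldsymbol b,\mathrm{d} c)$ supported on the compact sphere $\{\|\boldsymbol b\|_1+|c|=1\}$, whose total variation is controlled by the $c_{\rho_n}$-bound; identifying the weak-$*$ limit as an admissible representing measure for $f^{\dagger}$ in $\mathcal{B}_1$ then delivers $\|f^{\dagger}\|_{\mathcal{B}_1}\le\liminf_\ell c_{\rho_n}(f_{n,k_\ell})$, and a diagonal extraction in the double index $(n,k_\ell)$ produces a single convergent subsequence.
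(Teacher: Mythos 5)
Your proposal is correct and follows essentially the same route as the paper's proof: coercivity of the stabilizer traps minimizing sequences in a compact $X_{n,R}$ for parts (a) and (b), and in part (c) the competitor $\hat f_n$ from Proposition~\ref{Approximtion theorem} inserted into the optimality inequality yields exactly the paper's twin estimates on the discrepancy and on $\varphi(c_{\rho_n}(f_{n,k}))$, followed by the same feasibility and $\mathcal{R}$-minimality conclusions. The one place you go beyond the paper is the final lower-semi-continuity step: the paper simply ``passes to the limit'' in $\varphi(\|f_{n,k_\ell}\|_{\mathcal{B}_1})\le\cdots$ and asserts that the $L^2$-limit lies in $\mathcal{B}_1$ with $\varphi(\|f^{\dagger}\|_{\mathcal{B}_1})\le\varphi(c_{\rho}(\overline f))$, whereas you correctly flag that weak-$*$ closedness of Barron-norm balls under $L^2$ convergence is not automatic and supply the standard fix via weak-$*$ compactness of the representing measures $a\,\rho_n$ on the compact parameter sphere; this fills a step the paper leaves implicit and is worth making explicit.
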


\begin{proof}
 (a) Fix $n\in\mathbb N$, $g\in\mathcal Y$, and $\alpha>0$. Let $f_0\in X_{n,\infty}$ with $\mathcal T_\alpha(f_0,g^\delta)<\infty$ (e.g. $f_0=0$). Similar to the proof of Case~B in Theorem \ref{main2}, there exists a minimizing sequence $\{f_{n,k}\}_{k\in\mathbb{N}}\subset X_{n,\infty}$ such that 
\[
\alpha \mathcal{R}_n(f_{n,k})\le \mathcal T_n^\delta(f_{n,k})\le \mathcal T_n^\delta(f_0)+1, \quad \forall k\in\mathbb N.
\]
This implies $\mathcal R_n(f_{n,k})
\le \frac{\mathcal T_\alpha(f_0,g^\delta)+1}{\alpha}
=:C_{n,\delta}$. Recall that $\|\boldsymbol{b}_j\|_1+|c_j|=1$ and $\mathcal{R}_n(f_{n,k})=\varphi\left(c_{\rho_{n,k}}(f_{n,k})\right) = \varphi\left(\frac{1}{n}\sum_{j=1}^{n}|a_j^{(k)}|\right)$. By the monotonicity of $\varphi$ and the definition of its generalized inverse $\varphi^{-1}$, we obtain
\[
|a_j^{(k)}|
\le n\,c_{\rho_{n,k}}(f_{n,k})
\le n\,\varphi^{-1}(C_{n,\delta})
=:R_{n,\delta},
\quad j=1,\dots,n.
\]
Thus $f_{n,k}\in X_{n,R_{n,\delta}}$ for all $k$. Since $R_{n,\delta}<\infty$, Lemma~\ref{lemma compact set} implies that $X_{n,R_{n,\delta}}$ is sequentially compact in $C(K)$ and $L^2(\Omega)$. This implies that the sequence $\{f_{n,k}\}$ admits a subsequence, denoted again by $\{f_{n,k_\ell}\}$, and some $f_n^*\in X_{n,R_{n,\delta}}$ such that $f_{n,k_\ell} \to f_n^*$ in $C(K)$, and hence also in $L^2(\Omega)$. Moreover, by the proof of Lemma~\ref{lemma compact set}, the corresponding normalized parameters may be chosen to converge. Since $\varphi$ is continuous and $\mathcal R_n(f)=\varphi(c_\rho(f))$, we obtain $\mathcal R_n(f_{n,k_\ell})\to \mathcal R_n(f_n^*)$. Combined with the lower semi-continuity of the functional $\mathcal{T}_{\alpha}$ from Assumption~\ref{condition}, we obtain
\[
    \mathcal{T}_{\alpha}(f_n^*,g)
    \le \liminf_{\ell\to\infty} \mathcal{T}_{\alpha}(f_{n,k_\ell},g)
    = \inf_{f\in X_{n,\infty}} \mathcal{T}_{\alpha}(f,g),
\]
and thus $f_n^*$ is a minimizer of $\mathcal{T}_{\alpha}(\cdot,g)$ on $X_{n,\infty}$.

\noindent(b) By the minimizing property of $f_{n,k}$, we have $\mathcal T_\alpha(f_{n,k},g_k)
\le
\mathcal T_\alpha(0,g_k)$. Since $\alpha>0$ and $\mathcal R_n\ge0$, it follows that
\[
\alpha\mathcal R_n(f_{n,k})
\le
\mathcal T_\alpha(f_{n,k},g_k)
\le
\mathcal T_\alpha(0,g_k).
\]
Moreover, by the continuity of $g\mapsto \mathcal D(A(0),g)$ and $g_k\to g$, the sequence $\{\mathcal T_\alpha(0,g_k)\}$ is bounded. Hence $\{\mathcal R_n(f_{n,k})\}$ is uniformly bounded. As in the proof of part (a), this implies that there exists a radius $R_{n,\alpha}^1<\infty$ such that $f_{n,k}\in X_{n,R_{n,\alpha}^1},$ for any $k\in\mathbb N$. By Lemma~\ref{lemma compact set}, after passing to a subsequence, there exists $f_n^*\in X_{n,R_{n,\alpha}^1}\subset X_{n,\infty}$ such that $f_{n,k_\ell}\to f_n^*$ in $C(K)$. Moreover, by the same parameter-convergence argument as in part (a), we have $\mathcal R_n(f_{n,k_\ell})\to \mathcal R_n(f_n^*)$. Using the sequential lower semi-continuity of $(f,g)\mapsto\mathcal D(A(f),g)$ and the convergence $g_{k_\ell}\to g$, we have
\[
\mathcal T_\alpha(f_n^*,g)
\le
\liminf_{\ell\to\infty}
\mathcal T_\alpha(f_{n,k_\ell},g_{k_\ell}).
\]
On the other hand, for every $f\in X_{n,\infty}$, we have $\mathcal T_\alpha(f_{n,k_\ell},g_{k_\ell})\le\mathcal T_\alpha(f,g_{k_\ell})$. Letting $\ell\to\infty$ and using the continuity with respect to $g$, we get
\[
\mathcal T_\alpha(f_n^*,g)
\le
\mathcal T_\alpha(f,g),
\qquad
\forall f\in X_{n,\infty}.
\]
Thus $f_n^*\in\arg\min_{f\in X_{n,\infty}}\mathcal T_\alpha(f,g)$.

\noindent(c) Let $\{(n_\ell,k_\ell)\}_{\ell=1}^{\infty}$ be any sequence such that $n_\ell\to\infty, k_\ell\to\infty$. For simplicity, set $f_\ell:=f_{n_\ell,k_\ell},
\alpha_\ell:=\alpha_{k_\ell,n_\ell}$. We show that a subsequence of $\{f_\ell\}$ converges in $C(K)$ to an $\mathcal R$-minimizing solution.

Let $\overline f\in\mathcal B_1$ be an arbitrary solution of $A(\overline f)=g$. For any $\varepsilon>0$, choose a representing probability measure $\rho$ such that $c_\rho(\overline f)\le (1+\varepsilon)\|\overline f\|_{\mathcal B_1}$. By Proposition~\ref{Approximtion theorem}, for every $n_\ell$ there exists $\overline f_{n_\ell}\in X_{n_\ell,c_{\rho}(f_{n_\ell})}$ such that
\[
c_{\rho_{n_\ell}}(\overline f_{n_\ell})
\le c_\rho(\overline f),
\qquad
\|\overline f-\overline f_{n_\ell}\|_{H^1(\Omega)}
\le
\frac{C(\Omega,d)}{\sqrt{n_\ell}}\|\overline f\|_{\mathcal B_1}.
\]
Using the minimizing property of $f_\ell$, we have
\[
\mathcal D(A(f_\ell),g_{k_\ell})
+
\alpha_\ell \mathcal R_n(f_\ell)
\le
\mathcal D(A(\overline f_{n_\ell}),g_{k_\ell})
+
\alpha_\ell \mathcal R_n(\overline f_{n_\ell}).
\]
Since $\mathcal R_n(\overline f_{n_\ell})
=
\varphi(c_{\rho_{n_\ell}}(\overline f_{n_\ell}))
\le
\varphi(c_\rho(\overline f))$, and by Assumption~\ref{condition},
\[
\mathcal D(A(\overline f_{n_\ell}),g_{k_\ell})
\le
\tau\mathcal D(A(\overline f_{n_\ell}),A(\overline f))
+
\tau\mathcal D(g,g_{k_\ell})
\le
\tau C n_\ell^{-s/2}+\tau\delta_{k_\ell},
\]
where $C=K_A\bigl(C(\Omega,d)\|\overline f\|_{\mathcal B_1}\bigr)^s$, we obtain $\mathcal D(A(f_\ell),g_{k_\ell})\to0$ and
\begin{equation}
    \label{ApenCeq1}
    \mathcal R_n(f_\ell)
\le
\tau\alpha_\ell^{-1}
\bigl(Cn_\ell^{-s/2}+\delta_{k_\ell}\bigr)
+
\varphi(c_\rho(\overline f)).
\end{equation}
According to the parameter choice \eqref{choicealpha}, the right-hand side is bounded, and hence $\{\mathcal R_n(f_\ell)\}$ is bounded. Since
$\mathcal R_n(f_\ell)=\varphi(c_{\rho_{n_\ell}}(f_\ell))$ and
$\varphi(t)\to\infty$ as $t\to\infty$, the corresponding coefficient averages $q_\ell:=\frac1{n_\ell}\sum_{j=1}^{n_\ell}|a_j^{(\ell)}|=c_{\rho_{n_\ell}}(f_\ell)$ are uniformly bounded. Therefore, by Lemma~\ref{lem:barron_compact_lsc}, there exists a subsequence, still denoted by $\{f_\ell\}$, and some $f^*\in C(K)\cap\mathcal B_1$ such that $f_\ell\to f^*$ in $C(K)$. Moreover, $\|f^*\|_{\mathcal B_1}\le\liminf_{\ell\to\infty}c_{\rho_{n_\ell}}(f_\ell)$.

By the sequential lower semi-continuity of $\mathcal D$ and the convergence $g_{k_\ell}\to g$, we get
\[
\mathcal D(A(f^*),g)
\le
\liminf_{\ell\to\infty}
\mathcal D(A(f_\ell),g_{k_\ell})
=0.
\]
Since $\mathcal D$ is nonnegative and definite, this implies $A(f^*)=g$. It remains to show that $f^*$ is $\mathcal R$-minimizing. From the estimate \eqref{ApenCeq1}, we have $\limsup_{\ell\to\infty}\mathcal R_n(f_\ell)\le\varphi(c_\rho(\overline f))$. Using Lemma~\ref{lem:barron_compact_lsc} and the monotonicity and continuity of $\varphi$, we obtain
\[
\mathcal R(f^*)
=
\varphi(\|f^*\|_{\mathcal B_1})
\le
\liminf_{\ell\to\infty}
\varphi(c_{\rho_{n_\ell}}(f_\ell))
=
\liminf_{\ell\to\infty}\mathcal R_n(f_\ell)
\le
\varphi(c_\rho(\overline f)).
\]
Since $c_\rho(\overline f)\le(1+\varepsilon)\|\overline f\|_{\mathcal B_1}$, we have $\mathcal R(f^*)\le\varphi\bigl((1+\varepsilon)\|\overline f\|_{\mathcal B_1}\bigr)$. Letting $\varepsilon\to0$ and using the continuity of $\varphi$, we conclude that
\[
\mathcal R(f^*)
\le
\varphi(\|\overline f\|_{\mathcal B_1})
=
\mathcal R(\overline f).
\]
Since $\overline f$ was an arbitrary solution of $A(f)=g$, $f^*$ is an $\mathcal R$-minimizing solution. Renaming $f^*$ as $f^\dagger$, we obtain the desired subsequential convergence.

\end{proof}

We next derive convergence rates results about the Tikhonov-NN method. It is well-known that under general assumptions (specifically, Assumption \ref{condition}), the convergence rate of $f_{n} \to f^{\dagger}$ as $n\to \infty$ for precise data, and of $f_{n,k_{\ell}} \to f^{\dagger}$ as $n,\ell\to \infty$ for noisy data, can be arbitrarily slow if the exact solution $f^{\dagger}$ lacks sufficient smoothness (cf. [23]). Consequently, establishing convergence rates typically requires imposing appropriate smoothness conditions on the exact solution. Commonly used smoothness conditions include range-type source conditions (see, e.g., \cite{Mathe-2006,ZhangHof2018,YuanZhang2025}), approximate source conditions (see \cite{Hofmann2009}), and variational source conditions, which take the form of variational inequalities (see \cite{Hofmann2007}). In what follows, we focus specifically on variational source conditions, as these represent the weakest assumptions under which convergence rate results can still be derived.

\begin{assumption}[Variational source condition]
\label{Varsourcecondition}
    Let $f^\dagger\in\mathcal B_1$ be an $\mathcal R$-minimizing solution of $A(f)=g$. There exist a radius $\epsilon>0$, an error measure functional $\mathcal{E}^{\dagger}: \mathcal{B}_1 \to[0, \infty]$ with $\mathcal{E}^{\dagger}\left(f^*\right)=0$ and a concave index function\footnote{$\Phi$ is called an index function if it is continuous and strictly increasing with $\Phi(0)=0$.} $\Phi:[0, \infty) \to[0, \infty)$ such that for all $f\in\mathcal{B}_1$ with $\|f-f^\ast\|_{C(K)}<\epsilon$, there holds
    \begin{equation*}
    \label{souceinquality}
        \mathcal{E}^{\dagger}(f) \leq \mathcal{R}(f)-\mathcal{R}\left(f^*\right)+\Phi(D(A(f), g)).
    \end{equation*}
\end{assumption}

By Assumption \ref{Varsourcecondition}, we are able to obtain the convergence rates results, as shown in Theorem \ref{convergencerate}.


\begin{theorem}[Convergence rates]
\label{convergencerate}
    Let Assumption~\ref{condition} hold, and let $f^\dagger$ be an
$\mathcal R$-minimizing solution in \eqref{deffdagger} satisfying
Assumption~\ref{Varsourcecondition}. Let $g^\delta\in\mathcal Y$ satisfy $\mathcal D(g,g^\delta)\le \delta$. For $n\in\mathbb N$ and $\alpha>0$, let $f_{n,\alpha}^\delta
\in
\arg\min_{f\in X_{n,\infty}}
\mathcal T_\alpha(f,g^\delta)$. Assume that, $\alpha\to0$ and $\alpha^{-1}(\delta+n^{-s/2})\to0$ as $\delta\to0$ and $n\to\infty$. Then, up to a subsequence, $f_{n,\alpha}^\delta\to f^\dagger$ in $C(K)$. In addition, the following statements hold:
\begin{enumerate}
    \item[(i)] For all sufficiently small $\delta,\alpha$ and sufficiently large $n$, we have
\begin{align}
\label{errorestimate}
\mathcal E^\dagger(f_{n,\alpha}^\delta)
&\le
\tau\alpha^{-1}(\delta+Cn^{-s/2})
+\Phi(\tau\delta)
+\frac{1}{\tau\alpha}\Phi^{-*}(\tau\alpha),
\\
\label{varphiestimate}
\|f_{n,\alpha}^{\delta}\|_{\mathcal B_1}
&\le
\varphi^{-1}
\left(
\tau\alpha^{-1}(\delta+Cn^{-s/2})
+\varphi(\|f^\dagger\|_{\mathcal B_1})
\right),
\end{align}
where $C=K_A(C({\Omega,d})\|f^{\dagger}\|_{\mathcal{B}_1})^s$, and $\Phi^{-*}$ denotes the Fenchel conjugate of the inverse function $\Phi^{-1}$, namely $\Phi^{-*}(\alpha):=(\Phi^{-1})^*(\alpha)=\sup_{t\ge0}\{\alpha t-\Phi^{-1}(t)\}$.
\item[(ii)] If $n(\delta)\asymp \delta^{-2/s},\alpha(\delta)\asymp \delta/\Phi(\tau\delta),$ we have $\mathcal{E}^{\dagger}(f_{n(\delta),\alpha(\delta)}^{\delta})
\ =\ \mathcal{O}\bigl(\Phi(\tau\delta)\bigr)$. In particular, when $\Phi(t)=c_0 t^\mu$ with $0<\mu<1$, the choices 
$n(\delta)\asymp \delta^{-2/s},\alpha(\delta)\asymp \delta^{1-\mu}$ yield the rate
$\mathcal{E}^{\dagger}(f_{n(\delta),\alpha(\delta)}^{\delta})= \mathcal{O}(\delta^\mu)$.
\end{enumerate}
\end{theorem}

\begin{proof}
Choose a representing probability measure $\rho$ of $f^\dagger$ such that $c_{\rho}(f^{\dagger})\le (1+\epsilon)\|f^{\dagger}\|_{\mathcal{B}_1}$. According to Proposition~\ref{Approximtion theorem}, the definition of $\mathcal{B}_1$ norm, and monotonicity of $\varphi$, there exists a two-layer network $f^{\dagger}_n$ that satisfies
    \begin{equation}
    \label{preineq}
        \varphi(\|f^{\dagger}_n\|_{\mathcal{B}_1})\le \varphi(c_{\rho_n}(f^{\dagger}_n))\le \varphi(c_{\rho}(f^{\dagger})),\quad \|f^{\dagger}-f^{\dagger}_n\|_{H^1(\Omega)}\leq \frac{C(\Omega, d)}{\sqrt{n}}\|f^{\dagger}\|_{\mathcal{B}_1}.
    \end{equation}
     Therefore, by the minimization property of $f_{n,\alpha}^{\delta}$, we obtain the estimate
     \begin{equation*}
         \mathcal{D}(A(f_{n,\alpha}^{\delta}),g^{\delta})+\alpha \varphi(c_{\rho_n}(f_{n,\alpha}^{\delta}))\le \mathcal{D}(A(f^{\dagger}_n),g^{\delta})+\alpha \varphi(c_{\rho}(f^{\dagger}))
     \end{equation*}
    Using $\varphi(\|f^{\dagger}_n\|_{\mathcal{B}_1})\le \varphi(c_{\rho_n}(f^{\dagger}_n))$, since $\epsilon\to 0$ is arbitrary and $\varphi$ is continuous, we have
    \begin{equation}
    \label{prorate1}
        \mathcal{D}(A(f_{n,\alpha}^{\delta}),g^{\delta})+\alpha \varphi(\|f_{n,\alpha}^{\delta}\|_{\mathcal{B}_1})\le \mathcal{D}(A(f^{\dagger}_n),g^{\delta})+\alpha\varphi(\|f^{\dagger}\|_{\mathcal{B}_1}).
    \end{equation}
    Hence, it follows from \eqref{preineq}, $\mathcal{D}(g,g^{\delta})\le \delta$, and the conditions of $\mathcal{D}$ in Assumption \ref{condition} that
    \begin{equation*}
        \|f_{n,\alpha}^{\delta}\|_{\mathcal{B}_1}\le \varphi^{-1}\left(\tau\alpha^{-1}(\delta+K_A(C({\Omega,d})\|f^{\dagger}\|_{\mathcal{B}_1})^sn^{-\frac{s}{2}})+\varphi(\|f^{\dagger}\|_{\mathcal{B}_1})\right),
    \end{equation*}
    this gives the estimate \eqref{varphiestimate}. Moreover, by noting that $\mathcal{R}(f)=\varphi(\|f\|_{\mathcal{B}_1})$, \eqref{prorate1} also yields
    \begin{equation}\label{R-diff}
    \alpha\bigl(\mathcal{R}(f_{n,\alpha}^{\delta})-\mathcal{R}(f^{\dagger})\bigr)
    \le \mathcal{D}(A(f^{\dagger}_n),g^{\delta})
        -\mathcal{D}(A(f_{n,\alpha}^{\delta}),g^{\delta}),
\end{equation}

According to Theorem~\ref{well-posedness}(c), we have $f_{n,\alpha}^{\delta}\to f^{\dagger}$ in $C(K)$ as $\delta\to0$ and $n\to\infty$, hence for every $\varepsilon>0$, $\|f_{n,\alpha}^{\delta}-f^{\dagger}\|_{C(K)}<\varepsilon$ holds for all sufficiently small $\delta$ and sufficiently large $n$. By Assumption~\ref{souceinquality} and \eqref{R-diff}, we obtain
    \begin{align*}
        \alpha \mathcal{E}^{\dagger}(f_{n,\alpha}^{\delta}) &\le \mathcal{D}(A(f^{\dagger}_n),g^{\delta})-\mathcal{D}(A(f_{n,\alpha}^{\delta}),g^{\delta})+\alpha \Phi(\mathcal{D}(A(f_{n,\alpha}^{\delta}),g))\notag\\
        &\le \tau(\mathcal{D}(A(f^{\dagger}_n),A(f^{\dagger}))+\mathcal{D}(g,g^{\delta}))-\mathcal{D}(A(f_{n,\alpha}^{\delta}),g^{\delta})+\alpha\Phi(\tau(\mathcal{D}(A(f_{n,\alpha}^{\delta}),g^{\delta})+\mathcal{D}(g,g^{\delta}))\notag \\
        &\le \tau(\mathcal{D}(A(f^{\dagger}_n),A(f^{\dagger}))+\delta)-\mathcal{D}(A(f_{n,\alpha}^{\delta}),g^{\delta})+\alpha \Phi(\tau \delta)+\alpha\Phi(\tau \mathcal{D}(A(f_{n,\alpha}^{\delta}),g^{\delta}))\notag\\
        &\le \tau(\delta+Cn^{-\frac{s}{2}})+\alpha \Phi(\tau \delta)+\tau^{-1}\Phi^{-*}(\tau\alpha),
    \end{align*}
      where the last inequality follows from Young's inequality applied to $\Phi^{-1}$ and its Fenchel conjugate $\Phi^{-*}:=(\Phi^{-1})^*$. More precisely, for $t\ge0$, it holds that $\alpha\Phi(\tau t)\le t+\tau^{-1}\Phi^{-*}(\tau\alpha)$. Balancing $\delta$ and $n^{-s/2}$ yields $n\asymp\delta^{-2/s}$ and a constant
$C_1>0$ with $\delta+Cn^{-s/2}\le C_1\delta$. Then,
\[
\mathcal{E}^{\dagger}(f_{n,\alpha}^{\delta})
\ \le\ \Phi(\tau\delta)\;+\;
\Bigl(\tau C_1\delta\,\alpha^{-1}
+(\tau\alpha)^{-1}\,\Phi^{-*}(\tau\alpha)\Bigr).
\]
An a priori scale that balances the two terms in $H_\delta:=\Bigl(\tau C_1\delta\,\alpha^{-1}+(\tau\alpha)^{-1}\,\Phi^{-*}(\tau\alpha)\Bigr)$ is $\alpha(\delta)\ \asymp\ \frac{\delta}{\Phi(\tau\delta)}$. Therefore, for the choice $n(\delta)\asymp \delta^{-2/s}$ and $\alpha(\delta)\asymp \delta/\Phi(\tau\delta)$ we obtain $\mathcal{E}^{\dagger}(f_{n(\delta),\alpha(\delta)}^{\delta}) =\mathcal{O}\bigl(\Phi(\tau\delta)\bigr)$. In the special case $\Phi(t)=c_0 t^\mu$ with $0<\mu<1$, this reduces to
$\alpha(\delta)\asymp \delta^{1-\mu}$ and the rate $\mathcal{E}^{\dagger}(f_{n(\delta),\alpha(\delta)}^{\delta})  =\mathcal{O}(\delta^\mu)$.
\end{proof}

For the specific data consistency term $\mathcal{D}(A(f),g)=\|A(f)-g\|_{L^2(\Omega)}^2$, we present the following corollary. It summarizes the corresponding convergence properties established in Theorem~\ref{well-posedness} and the convergence rates derived in Theorem~\ref{convergencerate}.

\begin{corollary}
\label{corollary1}
  Suppose that the forward operator $A$ of (\ref{1}) is $L_A$-Lipschitz continuous, and that $f^{\dagger}$ in \eqref{deffdagger} satisfies a certain variational source condition \ref{souceinquality} with $\mathcal{E}^{\dagger}(f)=\|f-f^{\dagger}\|_{C(K)}^2$ and $\mathcal R(f)=\|f\|_{\mathcal B_1}^2$, i.e., there exists an index function $\Phi$ such that for all $f\in\mathcal{B}_1$ satisfying
$\|f-f^{\dagger}\|_{C(K)}<\varepsilon$ for some $\varepsilon>0$, one has
    \begin{equation*}
        \|f-f^{\dagger}\|_{C(K)}^2 \leq \|f\|_{\mathcal{B}_1}^2-\|f^{\dagger}\|_{\mathcal{B}_1}^2+\Phi\big(\|A(f)-A(f^{\dagger})\|_{L^2(\Omega)}^{2}\big).
    \end{equation*}
Let $g^{\delta}\in \mathcal{Y}$ satisfy
$\|g-g^{\delta}\|^2_{\mathcal{Y}}\le \delta$ for some $\delta>0$. For $n\in\mathbb{N}$ and $\alpha>0$, let $f_{n, \alpha}^\delta(\mathbf{x})\in \arg\min_{f\in X_{n,\infty}}\mathcal{T}_\alpha(f,g^{\delta})$. Then the following hold:
    \begin{itemize}
        \item (Convergence) If the parameter choice $\alpha=\alpha(\delta,n)$ satisfy
\begin{equation*}
    \alpha \to 0,\quad \alpha^{-1}(\delta+n^{-1})\to 0,\quad \text{as}~n\to \infty,\quad \delta\to 0,
\end{equation*}
then there exists a convergent subsequence, still denoted by $\{f_{n,\alpha}^{\delta}\}$, such that $f_{n,\alpha}^{\delta}\to f^{\dagger}$ in $C(K)$.

\item (Convergence rate) If the parameters are chosen such that $n(\delta) \asymp \delta^{-1}$ and $\alpha(\delta) \asymp \delta / \Phi(\tau\delta)$, then it follows that $\| f_{n(\delta),\alpha(\delta)}^{\delta} - f^{\dagger} \|_{C(K)} = \mathcal{O}(\sqrt{\Phi(\tau\delta)})$. In particular, if $\Phi(t)=c_0 t^\mu$ with $c_0>0$ and $0<\mu<1$, then the choices $n(\delta)\asymp \delta^{-1}$ and $\alpha(\delta) \asymp \delta^{1-\mu}$ yields the rate $\|f_{n(\delta),\alpha(\delta)}^{\delta}-f^{\dagger}\|_{C(K)}=\mathcal{O}(\delta^{\mu/2})$.
    \end{itemize}
\end{corollary}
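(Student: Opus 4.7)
The strategy is to specialize Theorems~\ref{well-posedness} and~\ref{convergencerate} to the concrete setting $\mathcal{D}(g_1,g_2)=\|g_1-g_2\|_{L^2(\Omega)}^2$, $\mathcal{R}(f)=\|f\|_{\mathcal{B}_1}^2$ (so $\varphi(x)=x^2$), and $\mathcal{E}^{\dagger}(f)=\|f-f^{\dagger}\|_{L^2(\Omega)}^2$. The bulk of the work is verifying that the abstract hypotheses reduce correctly in this setting and then tracking the exponent $s$ through the parameter-choice rules.

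First I would verify Assumption~\ref{condition}. For $\mathcal{D}(g_1,g_2)=\|g_1-g_2\|^2_{L^2}$, condition (A1.1) holds with $\tau=2$ via $(a+b)^2\le 2a^2+2b^2$; (A1.2) and (A1.3) are clear; and (A1.4) follows from continuity of $A$ combined with continuity of the squared $L^2$-norm. Condition (A1.5) is the place where Lipschitz continuity enters: from $\|A(f')-A(f^*)\|_{L^2}\le L_A\|f'-f^*\|_{H^1(\Omega)}$ and squaring, we obtain $\mathcal{D}(A(f'),g)\le L_A^2\|f'-f^*\|_{H^1(\Omega)}^2$, so (A1.5) holds with $s=2$ and $K_A=L_A^2$ (the embedding $\mathcal{B}_1\hookrightarrow H^1$ from Proposition~\ref{proBarron} enters only implicitly through the constant $C$ in \eqref{errorestimate}). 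Assumption (A2) is immediate since $\varphi(x)=x^2$ is continuous and strictly increasing on $\mathbb{R}_+$.

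For the convergence part I would apply Theorem~\ref{well-posedness}(c). With the noise condition $\|g-g^\delta\|_{\mathcal{Y}}^2\le\delta$ (so $\mathcal{D}(g,g^\delta)\le\delta$) and $s=2$, the general parameter choice $\alpha_{k,n}^{-1}(\delta_k+n^{-s/2})\to0$ specializes exactly to the hypothesis $\alpha^{-1}(\delta+n^{-1})\to0$ of the corollary. Theorem~\ref{well-posedness}(c) then yields a subsequence of $\{f_{n,\alpha}^\delta\}$ converging in $L^2(\Omega)$ to an $\mathcal{R}$-minimizing solution of $A(f)=g$.

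For the rates, I would next check that the variational inequality assumed in the corollary is precisely Assumption~\ref{Varsourcecondition} with $\mathcal{E}^{\dagger}(f)=\|f-f^{\dagger}\|_{L^2}^2$ and the given index function $\Phi$, and then invoke Theorem~\ref{convergencerate}(ii). With $s=2$ this gives $n(\delta)\asymp\delta^{-2/s}=\delta^{-1}$, $\alpha(\delta)\asymp\delta/\Phi(\tau\delta)$, and the bound $\mathcal{E}^{\dagger}(f_{n(\delta),\alpha(\delta)}^{\delta})=\mathcal{O}(\Phi(\tau\delta))$. Taking square roots yields $\|f_{n(\delta),\alpha(\delta)}^\delta-f^{\dagger}\|_{L^2}=\mathcal{O}(\sqrt{\Phi(\tau\delta)})$. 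Specializing to $\Phi(t)=c_0 t^\mu$ gives $\Phi(\tau\delta)\asymp\delta^\mu$, hence $\alpha(\delta)\asymp\delta^{1-\mu}$ and the final rate $\mathcal{O}(\delta^{\mu/2})$. The argument is essentially a book-keeping exercise; the only subtle points are getting the constant $\tau=2$ right in (A1.1), pinning down the exponent $s=2$ consistently in both the parameter scalings $n\asymp\delta^{-1}$ and $\alpha\asymp\delta^{1-\mu}$, and remembering to take a square root at the end since $\mathcal{E}^{\dagger}$ is the squared $L^2$-error rather than the error itself.
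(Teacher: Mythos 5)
Your proposal is correct and follows essentially the same route as the paper, which proves the corollary by combining Theorem~\ref{well-posedness} and Theorem~\ref{convergencerate} with $\mathcal{D}(A(f),g)=\|A(f)-g\|_{L^2(\Omega)}^2$, $s=2$ and $K_A=L_A^2$. Your additional verification of Assumption~\ref{condition} (with $\tau=2$ in (A1.1)) and the explicit square-root step at the end are details the paper leaves implicit but are consistent with its remark following Assumption~\ref{condition}.
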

\begin{proof}
    These conclusions follow naturally from a combination of Theorem~\ref{well-posedness} and Theorem~\ref{convergencerate} by letting the data consistency term $\mathcal{D}(A(f),g)=\|A(f)-g\|_{L^2(\Omega)}^2$ with $s=2,K_A=L_A^2$.
\end{proof}

 The convergence rate results established in Theorem~\ref{convergencerate}, along with Corollary~\ref{corollary1}, imply an optimal choice for the number of neurons, thereby providing explicit answers to the foundational question (\textbf{Q4}) posed in the introduction.
\begin{itemize}
    \item \textbf{Answer to Q4:} Under a general variational source condition, the proposed Tikhonov-NN scheme achieves the convergence rate \(\Phi(\tau\delta)\). This rate has the same form as the classical rate obtained under variational source conditions~\cite{Flemming2018}. Moreover, Theorem~\ref{convergencerate} quantifies the network width needed to realize this rate together with a suitable choice of the regularization parameter, namely $n(\delta) \asymp \delta^{-2/s}$, as established in Theorem~\ref{convergencerate}.
\end{itemize}

\section{Numerical experiments}
\label{simulation}

In this section, we present three numerical examples to verify the theoretical results discussed in the previous sections. The numerical experiments follow the procedure outlined below, and all relevant data and source code are publicly available on GitHub at \url{https://github.com/z1998w/DNNip}.

The numerical procedure consists of the following four steps.

\smallskip
\noindent\emph{Step 1: Generation of the exact solution.}

As a reference solution we generate the exact function $f^{\dagger}$ of the form (cf.~\cite{LiYuanyuan})
\begin{equation}
\label{exactf}
f^{\dagger}(t)=\sum_{j=1}^T \phi_j K_d\left(t, \ell^j\right), \quad t \in[0,1]^d,
\end{equation}
where $K_d$ is the polynomial kernel defined by $K_d(t, \ell)=\langle t, \ell \rangle_{\mathbb{R}^d}+1$, for $t \in[0,1]^d$. Unless otherwise specified, the nodes $\ell^j=\left(\ell_1^j, \ell_2^j, \cdots, \ell_d^j\right)$ and coefficients $\phi_j$ (for $j=1, \cdots, T$) were independently and uniformly drawn from the cubes $[-1,1]^d$ and $[-1,1]$ respectively. As discussed in Remark~\ref{finbaron}, the exact solutions constructed in this way belong to the Barron space $\mathcal B_1$.  For each example below, appropriate modifications were made to this construction to suit specific purposes.

\smallskip
\noindent\emph{Step 2: Generation of noisy data.}

We compute the exact data $g=A(f^\dagger)$ and generate the noisy data by
\[
g^\delta=g+\delta\xi,
\]
where $\delta>0$ denotes the noise level and $\xi$ is uniformly distributed in
$[-1,1]$.

\smallskip
\noindent\emph{Step 3: Computation of regularized solutions.}

We approximate $f^\dagger$ by two-layer neural networks of the form $f(\mathbf x)
=
\frac1n\sum_{j=1}^n
a_j(\boldsymbol b_j^T\mathbf x+c_j)_+$.
In the numerical experiments, we compare three reconstruction strategies. For ENN1 (Algorithm~\ref{alg:expanding_nn}), we choose an increasing sequence $r(n)$ bounded by a constant $B\ge c_{\rho_n}(f^{\dagger})=\|\sum_{j=1}^T \phi_jl^j\|_{1}+|\sum_{j=1}^T\phi_j|$. For each fixed width $n$, we solve
 \begin{equation*}
\min_{\substack{(a_j,\boldsymbol{b}_j,c_j) \in M_{r(n)},\\ j=1,\dots,n}}
\ \left\|\,A\!\left(\frac{1}{n} \sum_{j=1}^n a_j \left(\boldsymbol{b}_j^T \mathbf{x}+c_j\right)_{+}\right) - g^\delta \right\|_{\mathcal{Y}}
\end{equation*}
where $M_r := \{ (a_{j},\boldsymbol{b}_j,c_j)\in \mathbb{R}\times \mathbb{R}^d \times \mathbb{R},\ |a_j|\le r(n),\ \|\boldsymbol{b}_j\|_1+|c_j| =1 \}$. For ENN2 (Algorithm~\ref{alg:modified_enn}), we take $\mathcal{R}(f)=\mathcal{R}_H(f)=C_{\mathcal{R}_H}\|f\|_{H^1(\Omega)}$ that is a $C_{\mathcal{R}_H}$-Lipschitz continuous majorant functional of the $H^1$ norm, then we solve such minimization problem
    \begin{equation*}
\min_{\substack{(a_j,\boldsymbol{b}_j,c_j) \in M_{r(n)},\\ j=1,\dots,n}}
\ \left\|\,A\!\left(\frac{1}{n}\sum_{j=1}^n a_j\, (\boldsymbol{b}_j^T \mathbf{x} + c_j)_{+}\right) - g^\delta \right\|_{\mathcal{Y}}
\;+\; \beta_n a \left\|\frac{1}{n}\sum_{j=1}^n a_j\, (\boldsymbol{b}_j^T \mathbf{x} + c_j)_{+}\right\|_{H^1(\Omega)}
,
\end{equation*}
where we choose $\beta_n=c_0n^{-\frac{\theta}{2}}$, and $c_0=C_{\mathcal{R}_H}=0.1$. Furthermore, we also consider Tikhonov-NN (the neural network-based Tikhonov regularization method) as discussed in Section \ref{Tikhonov}. We solve \begin{equation*}
\min_{\substack{(a_j,\boldsymbol{b}_j,c_j) \in M_{\infty},\\ j=1,\dots,n}}
\ \left\|\,A\!\left(\frac{1}{n}\sum_{j=1}^n a_j\, (\boldsymbol{b}_j^T \mathbf{x} + c_j)_{+}\right) - g^\delta \right\|^2_{\mathcal{Y}}
\;+\; \alpha \left( \frac{1}{n}\sum_{j=1}^n |a_j|(\left\|\boldsymbol{b_j}\right\|_1+\left|c_j\right|) \right)^{2}
,
\end{equation*}
here the regularization parameter $\alpha$ is a small positive number chosen by $\alpha=(\delta+n^{-1})^{0.8}$.

\smallskip
\noindent\emph{Step 4: Error evaluation.}

To quantify the accuracy of the regularized solutions $f_n^\delta$, we compute the relative $L^2$-error with respect to the exact solution $f^{\dagger}$,
defined by
\begin{equation}\label{eq:rel_L2_error}
    \text{Relative } L^2 \text{ Error} := \frac{\|f_n^\delta - f^\dagger\|_{L^2([0,1]^d)}}{\|f^\dagger\|_{L^2([0,1]^d)}} \approx \frac{\sqrt{\sum_{i=1}^N (f_n^\delta(t_i) - f^\dagger(t_i))^2}}{\sqrt{\sum_{i=1}^N (f^\dagger(t_i))^2}}.
\end{equation}

\begin{remark}
\label{finbaron}
    The exact solution $f^\dagger$ generated by \eqref{exactf} belongs to
the Barron space $\mathcal B_1$. Indeed, it can be written as $f^{\dagger}(t)=u^T t+v$ with $u:=\sum_{j=1}^T \phi_j \ell^j\in \mathbb{R}^d$ and $v:=\sum_{j=1}^T \phi_j$. Let $n=d+1$ and choose the uniform empirical measure $\rho_n$ supported on the $d+1$ atoms $(a_k,\boldsymbol b_k,c_k)=(n u_k,e_k,0)$, $k=1,\ldots,d$, and $(a_0,\boldsymbol b_0,c_0)=(n v,\boldsymbol 0,1)$.
Then, for $t\in[0,1]^d$, since $t_k\ge0$, we have
\[
\frac1n\sum_{k=1}^d n u_k(e_k^Tt)_+
+
\frac1n n v(\boldsymbol 0^Tt+1)_+
=
\sum_{k=1}^d u_k t_k+v
=
f^\dagger(t).
\]
Thus $f^\dagger$ admits a finite Barron representation. Moreover, the corresponding representation cost is
\[
c_{\rho_n}(f^{\dagger})=\|f^{\dagger}\|_{\mathcal{B}_{1,\rho_n}}
=\mathbb{E}_{\rho_n}\left[
|a|\left(\|\boldsymbol b\|_1+|c|\right)
\right]
=\sum_{k=1}^d |u_k|+|v|
=\Big\|\sum_{j=1}^T \phi_j \ell^j\Big\|_1+\Big|\sum_{j=1}^T \phi_j\Big|.
\]
Consequently, $\|f^\dagger\|_{\mathcal B_1}\le c_{\rho_n}(f^\dagger)<\infty$, and hence $f^\dagger\in\mathcal B_1$.
\end{remark}

We now detail the numerical implementation common to all experiments.
All continuous problems are discretized: the solution \(f\) is represented on an \(N\)-point solution grid and the noisy data \(g^\delta\) on an \(M\)-point data grid. The continuous norms in the objective functionals are approximated by standard numerical quadrature on these grids. The resulting non-convex optimization problems are solved with the Adam optimizer with a fixed learning rate of \(10^{-3}\). To balance expressive capacity and training efficiency, we adopt a progressive width schedule: the number of neurons \(n\) starts at \(50\) and is increased in steps of \(20\) up to a prescribed maximum \(n_{\max}\). This expansion is accelerated by a warm-start strategy, where the parameters of existing neurons are retained and only the parameters of newly added neurons are randomly initialized. The specific discretization parameters and training iterations are chosen as follows. For Examples~\ref{Ex1}-\ref{Ex2} (1D problems) we set \(N=101\) (\(M=51\) for Example~\ref{Ex1}, \(M=101\) for Example~\ref{Ex2}), \(n_{\max}=1000\), and train for 600 (ENN1 and ENN2) or 1200 (Tikhonov-NN) iterations per increment of \(n\). For Example~\ref{Ex3} (2D problem) we set \(N=31^2=961\), \(M=124\), \(n_{\max}=500\), and train for 1500 (ENN1 and ENN2) or 2500 (Tikhonov-NN) iterations per increment. The robustness of the observed trends is assessed by repeating all experiments for several random seeds. In the figures below, the horizontal axis (number of neurons \(n\)) is plotted on a base-10 logarithmic scale in order to emphasize the initial steep reduction of the error.

\begin{example}
\label{Ex1}
1D Fredholm integral equation of the first kind:
    \begin{equation*}
    A(f)=\int_0^1 K(t,s)f(s)\, ds=g(t),\quad t\in [0,1],s\in[0,1],
\end{equation*}
where
\begin{equation*}
K(t, s)= \begin{cases}s(1-t), & 0\le s \le t\le 1, \\  t(1-s), & 0\le t<s\le 1.\end{cases}
\end{equation*}
The operator $A$ is linear, compact, and self-adjoint on \(L^2[0,1]\). Its eigenvalues are \(\lambda_n = 1/(n^2\pi^2)\) with corresponding eigenfunctions \(\sin(n\pi t)\), which implies that \(A\) is injective. Moreover, as $\|u\|_{L^2(0,1)}\le \|u\|_{H^1(0,1)}$ and $\|A\|_{\mathcal L(L^2(0,1),L^2(0,1))}=1/\pi^2$, we have
\[
\|A(f)-A(f^\dagger)\|_{L^2(0,1)}
\le
\frac{1}{\pi^2}\|f-f^\dagger\|_{L^2(0,1)}
\le
\frac{1}{\pi^2}\|f-f^\dagger\|_{H^1(0,1)}.
\]
Hence $A$ is locally H\"older continuous at $f^\dagger$ with respect to the $H^1(0,1)$-norm, with exponent $\theta=1$ and constant $L_A=1/\pi^2$. Therefore, the forward operator satisfies the continuity, injectivity and local H\"older continuity assumptions of Theorems~1--2. In this example, the exact solution $f^{\dagger}$ is constructed by \eqref{exactf} with $T=10$ and $d=1$.

\end{example}
\begin{figure}[htbp]
	\centering
	\begin{minipage}{0.3\linewidth}
		\centering
		\includegraphics[width=\linewidth]{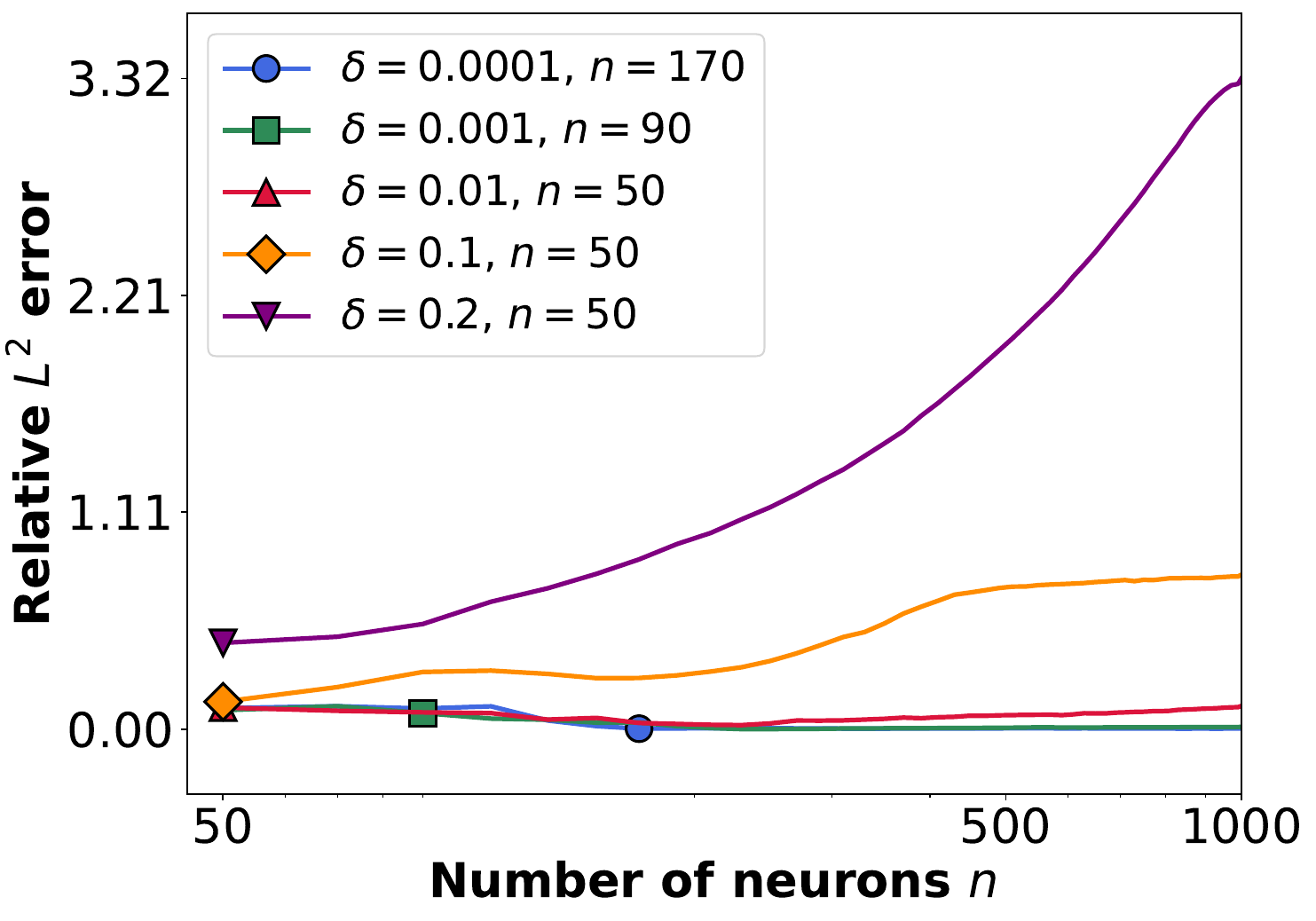}
	\end{minipage}
	\begin{minipage}{0.3\linewidth}
		\centering
		\includegraphics[width=\linewidth]{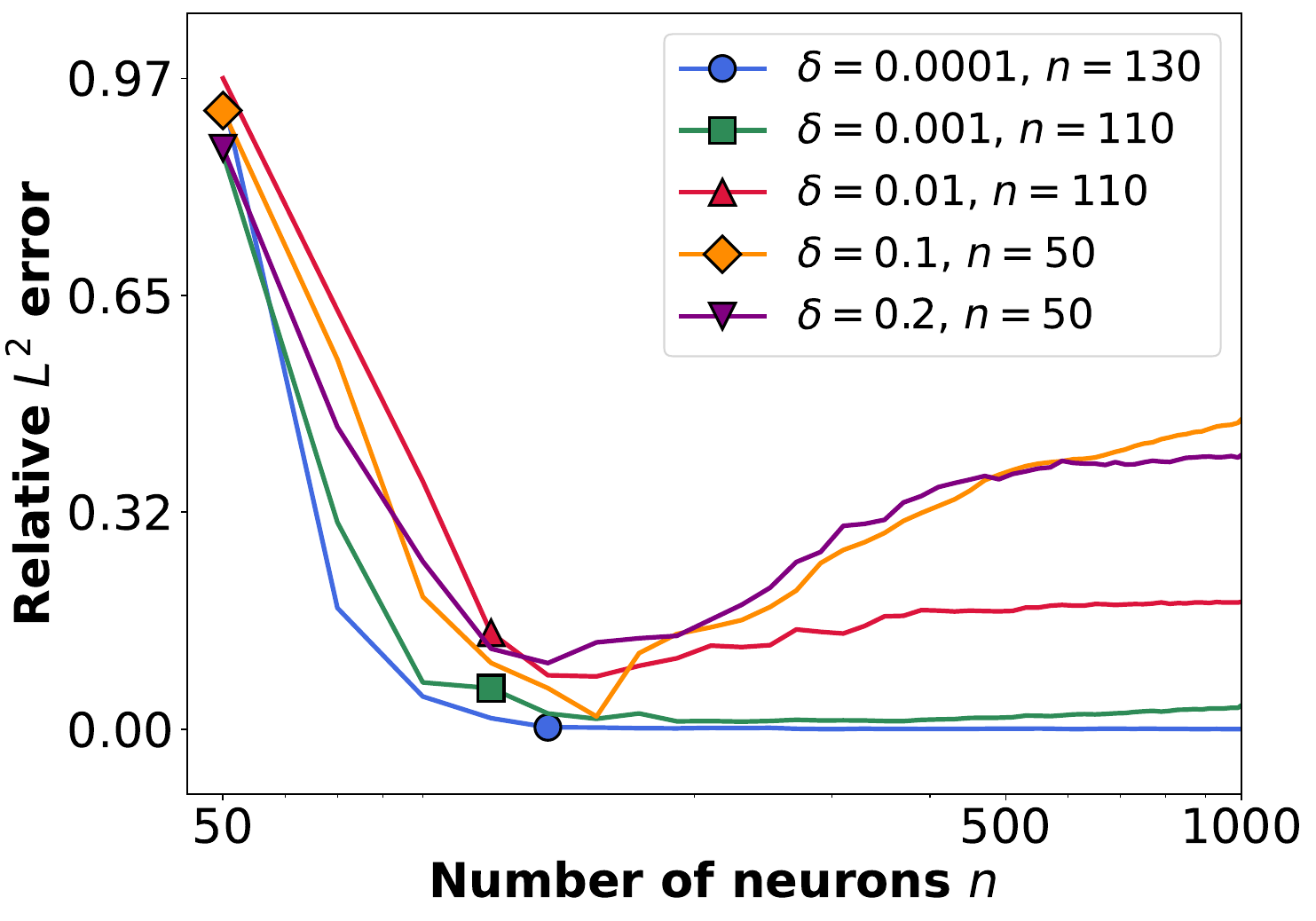}
	\end{minipage}
	\begin{minipage}{0.3\linewidth}
		\centering
		\includegraphics[width=\linewidth]{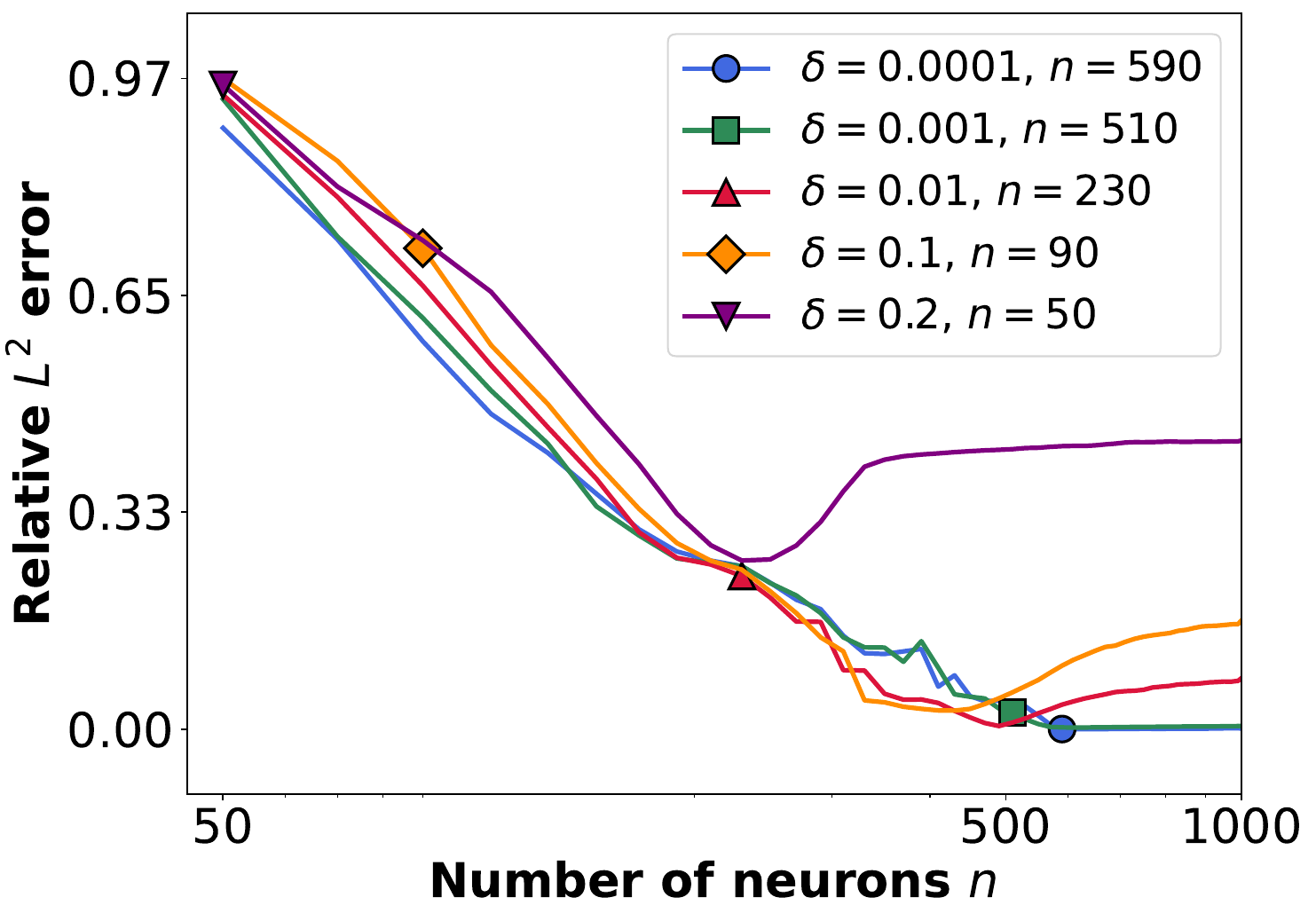}
	\end{minipage}

	\begin{minipage}{0.3\linewidth}
		\centering
		\includegraphics[width=\linewidth]{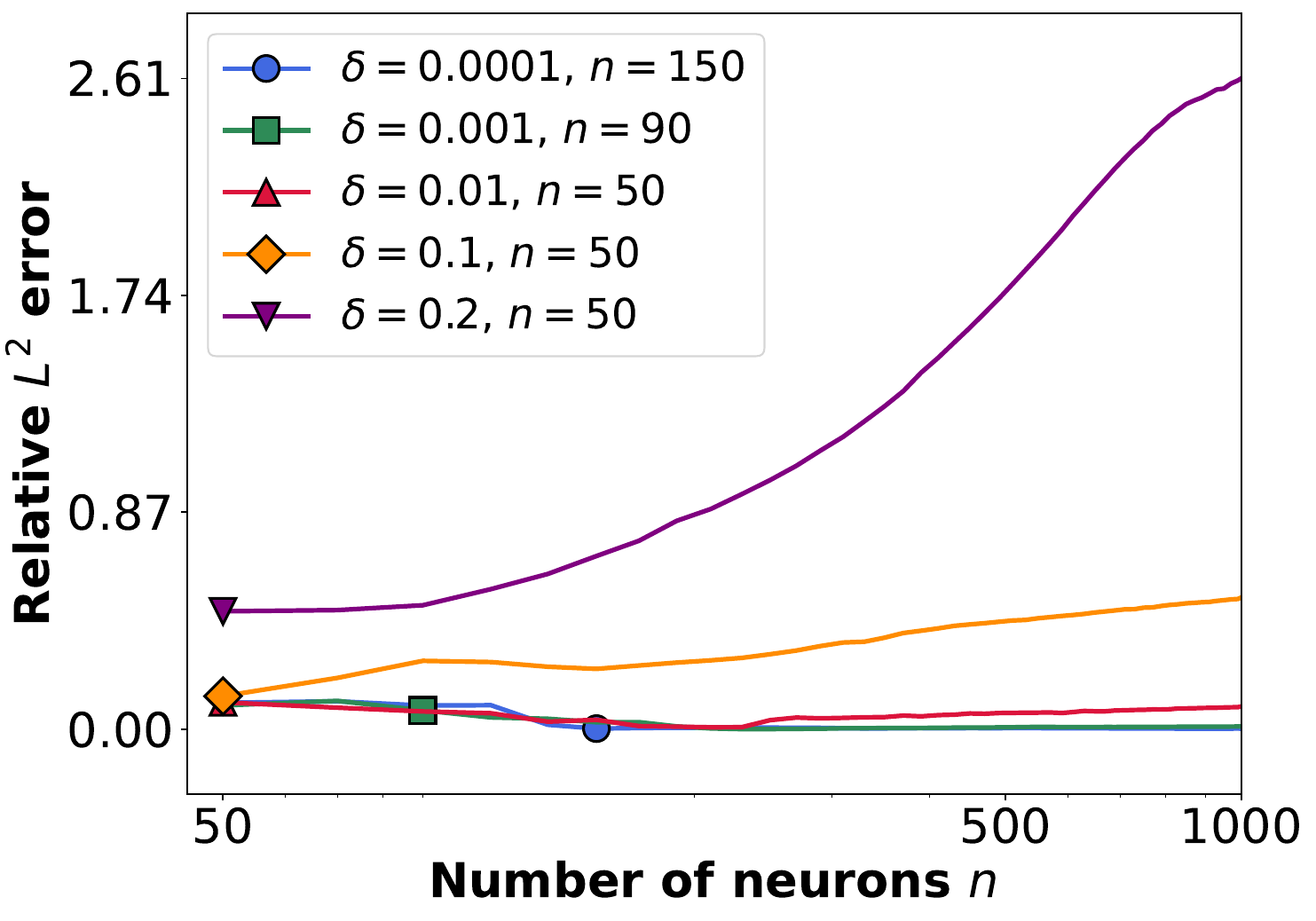}
	\end{minipage}
	\begin{minipage}{0.3\linewidth}
		\centering
		\includegraphics[width=\linewidth]{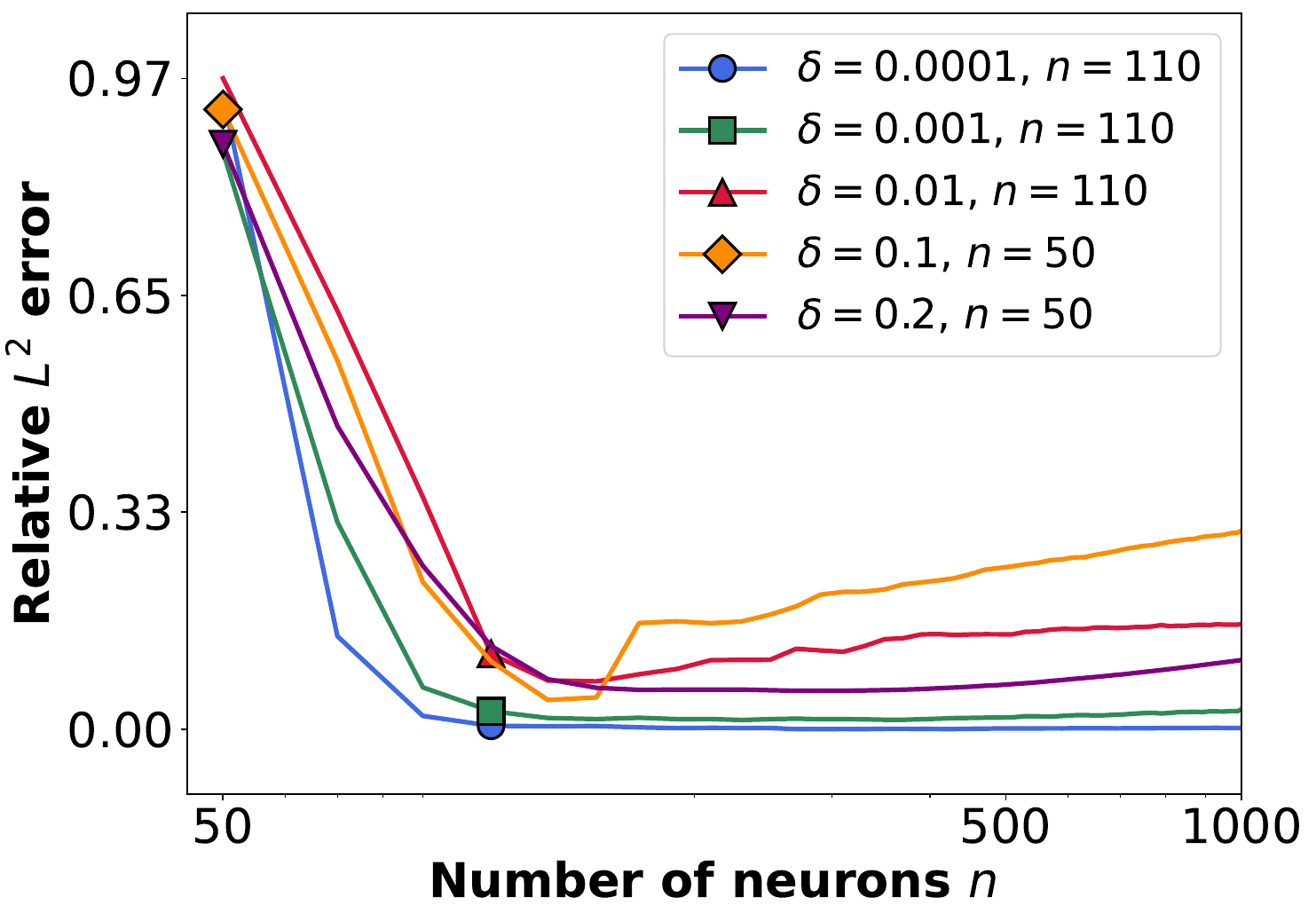}
	\end{minipage}
	\begin{minipage}{0.3\linewidth}
		\centering
		\includegraphics[width=\linewidth]{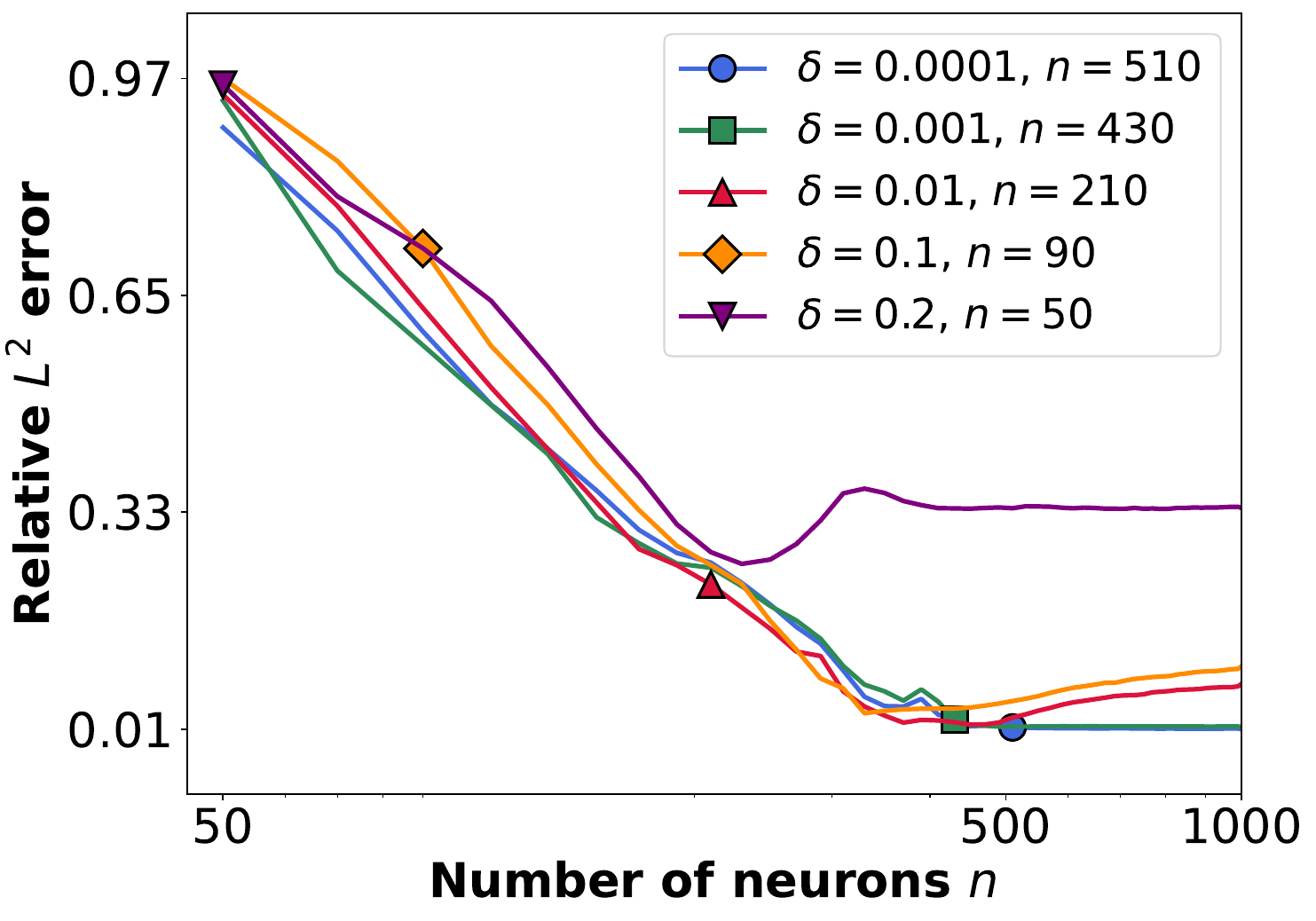}
	\end{minipage}

	\begin{minipage}{0.3\linewidth}
		\centering
		\includegraphics[width=\linewidth]{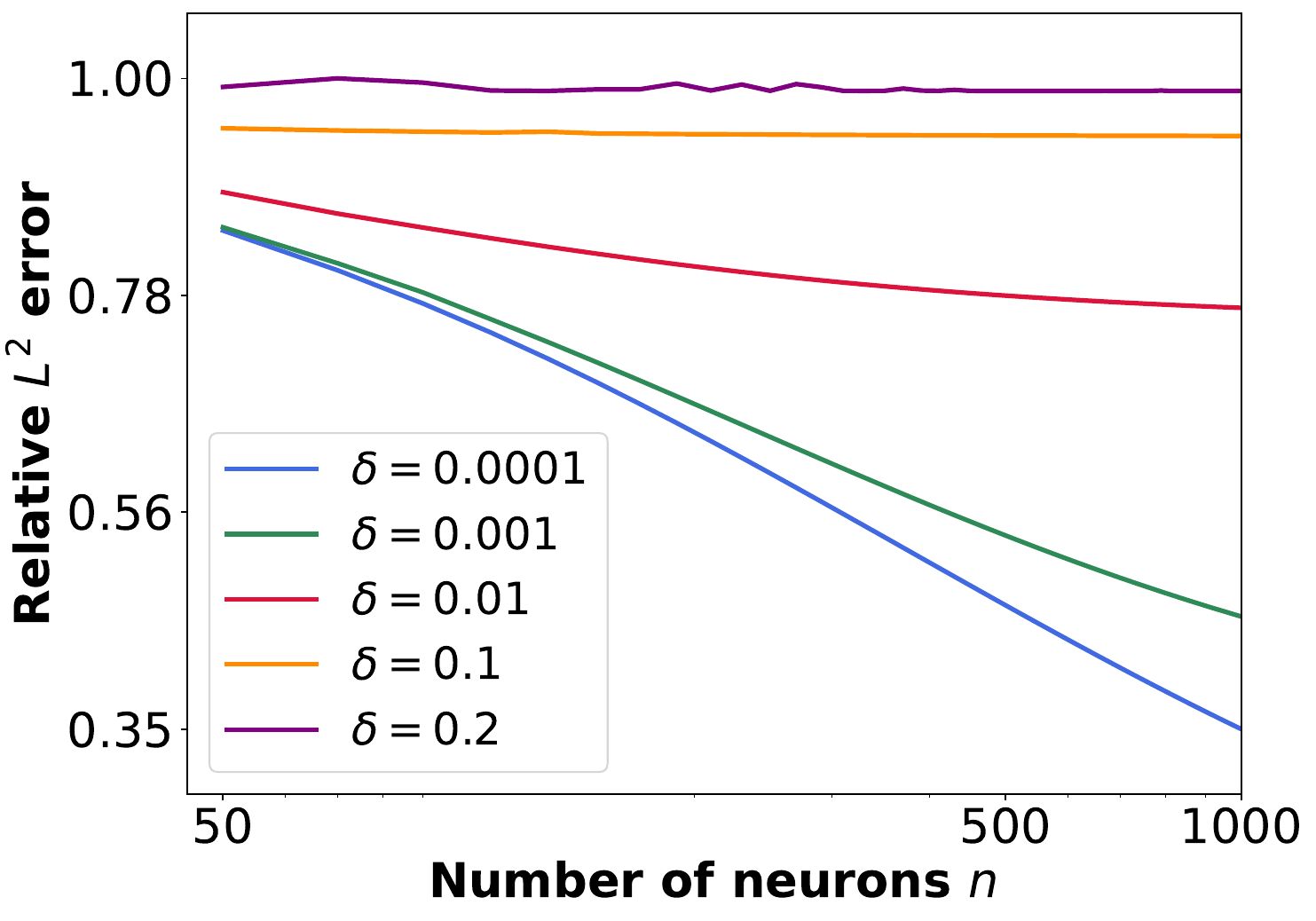}
	\end{minipage}
	\begin{minipage}{0.3\linewidth}
		\centering
		\includegraphics[width=\linewidth]{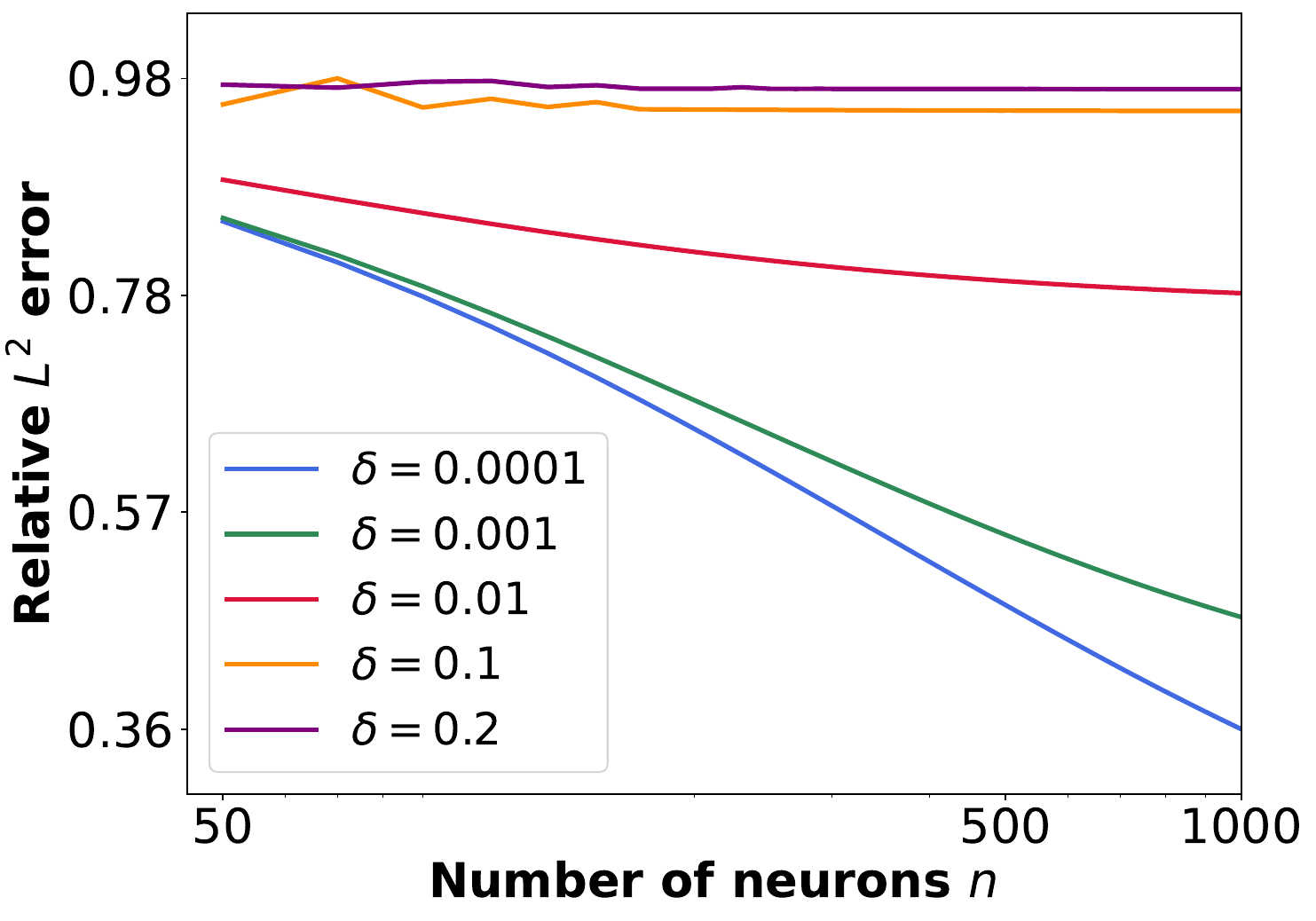}
	\end{minipage}
	\begin{minipage}{0.3\linewidth}
		\centering
		\includegraphics[width=\linewidth]{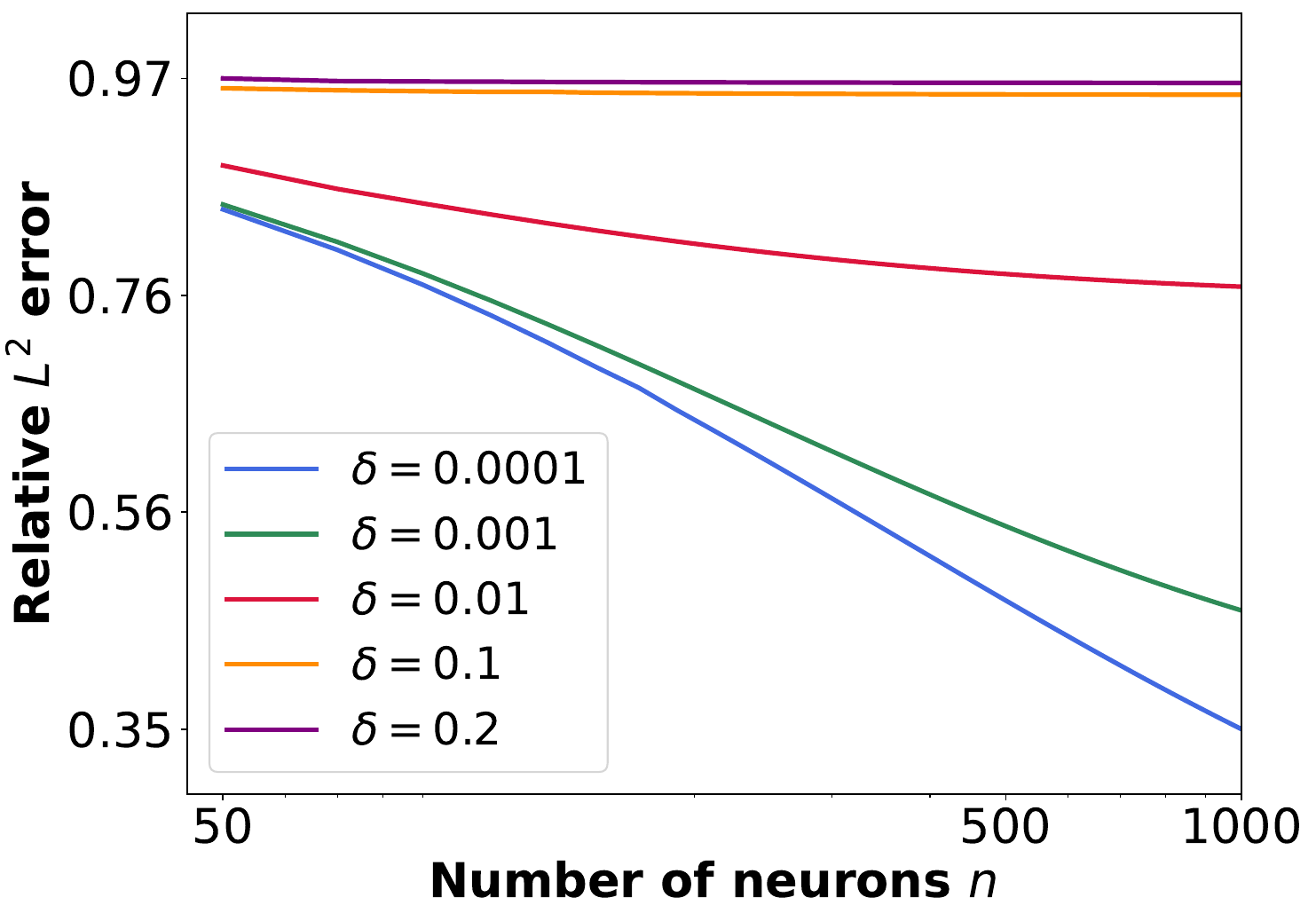}
	\end{minipage}

	\caption{1D linear integral equation: Relative $L^2$ error of $f_n^\delta$ obtained by ENN1 (top row), ENN2 (middle row), and Tikhonov-NN (bottom row). Each column corresponds to a different random seed: 111, 666, 3333 (from left to right). The marker on each curve highlights the first network architecture that satisfied the stopping criterion for each noise level, with $\tau=1.0001$. The corresponding value of $n$ (the stopping number) is indicated in the legend.}
	\label{fig:noa_error_comparison}
\end{figure}

\begin{example}
\label{Ex2}
1D auto-convolution Volterra integral equations (AVIEs) of the first kind:
    \begin{equation}
\label{IntegralEq}
    A(f)=\int_{0}^{t}  f(t-s)f(s) \,ds  = g(t), \quad t\in [0,1].
\end{equation}
Here, $A$ is a nonlinear continuous operator. However, on a finite interval, the auto-convolution operator is not injective on the whole nonnegative cone. Following \cite{Deng2023,Fleischer1999}, injectivity holds after restricting the admissible set to
\[
D_0^+
:=
\left\{
f\in L^2(0,1): f(t)\ge0\ \text{a.e. on }[0,1],
\text{ and } f \text{ does not vanish a.e. on any interval }[0,\varepsilon],
\varepsilon>0
\right\}.
\]
In particular, this condition is satisfied for nonnegative continuous function $f\in C[0,1]$ with $f(0)>0$. Furthermore, for any $f,f^{\dagger}\in H^1(\Omega)$ with $\|f-f^{\dagger}\|_{H^1(\Omega)}\le \gamma$, the following local Lipschitz estimate holds according to the Young's inequality:
\begin{equation*}
\|A(f)-A(f^{\dagger})\|_{L^2(0,1)}\; \le\; \bigl(\|f\|_{L^1(0,1)}+\|f^{\dagger}\|_{L^1(0,1)}\bigr)\,\|f-f^{\dagger}\|_{L^2(0,1)}.
\end{equation*}
By Sobolev embedding, \( \|u\|_{L^2(\Omega)} \le \|u\|_{H^1(\Omega)} \) and \( \|u\|_{L^1(\Omega)} \le C_\Omega \|u\|_{H^1(\Omega)} \) for some constant \( C_\Omega > 0 \). Together with the bound \( \|f\|_{H^1(\Omega)} \le \|f^\dagger\|_{H^1(\Omega)} + \gamma \), we obtain
\[
\|A(f) - A(f^\dagger)\|_{L^2(0,1)} \le C_\Omega \left( 2\|f^\dagger\|_{H^1(\Omega)} + \gamma \right) \|f - f^\dagger\|_{H^1(\Omega)}.
\]
Thus, after restricting the admissible set to the positive cone, Example~\ref{Ex2} satisfies the injectivity and local H\"older continuity assumptions required in Theorems~1--2 with $\theta=1$. For this example, to ensure that the exact solution belongs to $D_0^+$, we
define $f^\dagger$ according to \eqref{exactf}, where
$\ell^j\in[0,1]^d$ and $\phi_j\ge0$ are chosen such that $\sum_{j=1}^T \phi_j>0$, and we set $T=5,d=1$.
\end{example}

\begin{figure}[htbp]
	\centering
    \begin{minipage}{0.3\linewidth}
		\centering
		\includegraphics[width=\linewidth]{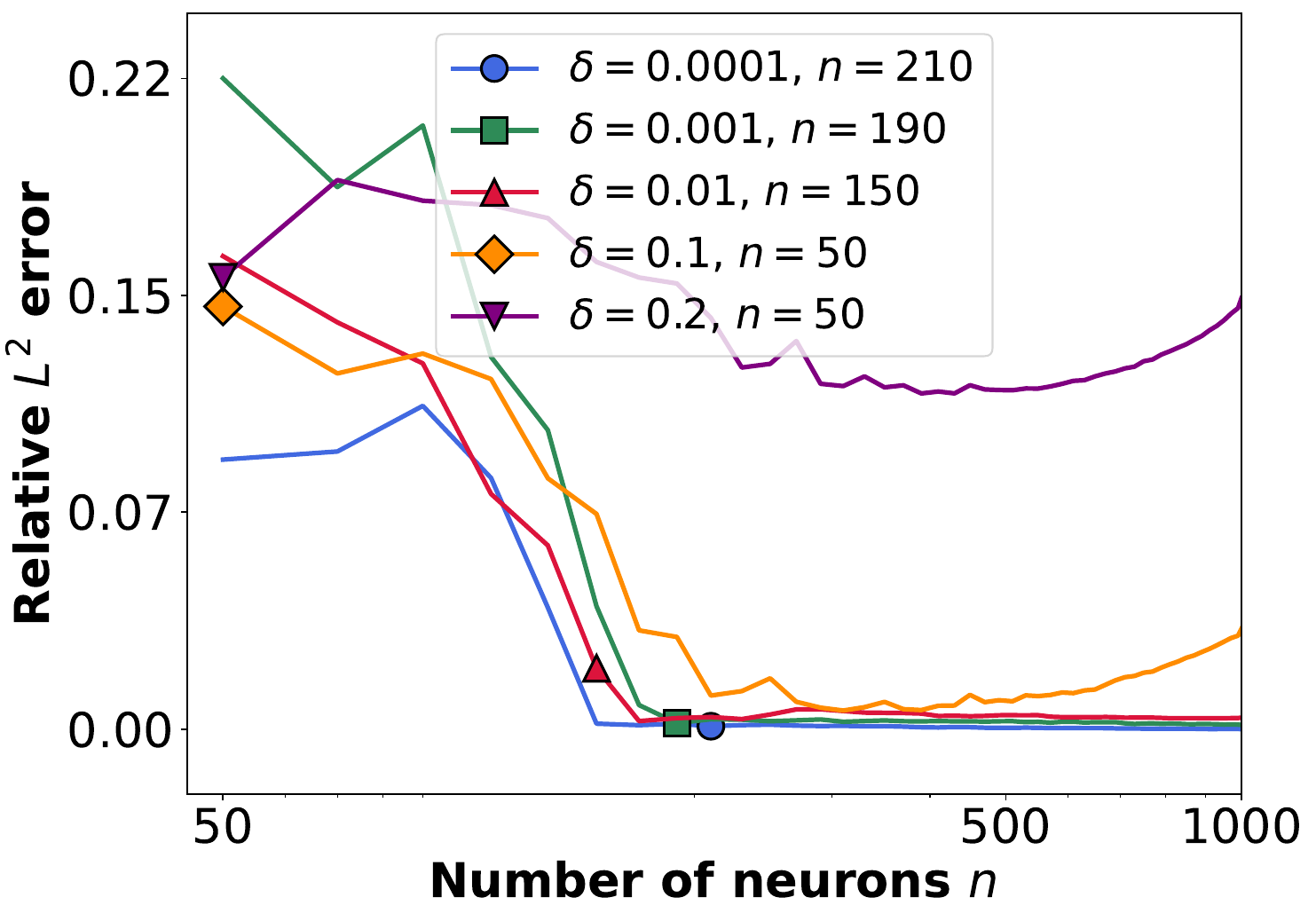}
	\end{minipage}
	\begin{minipage}{0.3\linewidth}
		\centering
		\includegraphics[width=\linewidth]{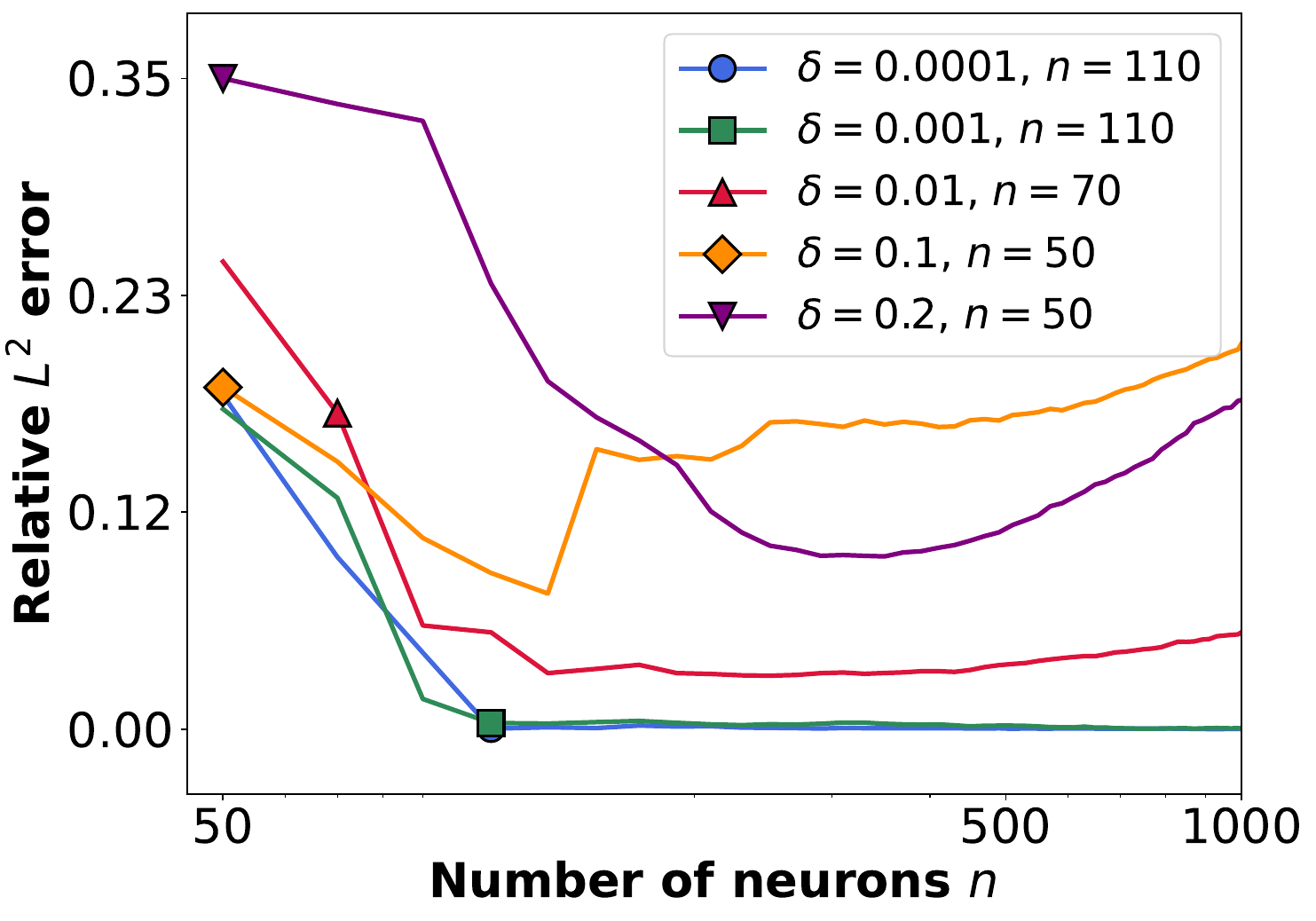}
	\end{minipage}
    \begin{minipage}{0.3\linewidth}
		\centering
		\includegraphics[width=\linewidth]{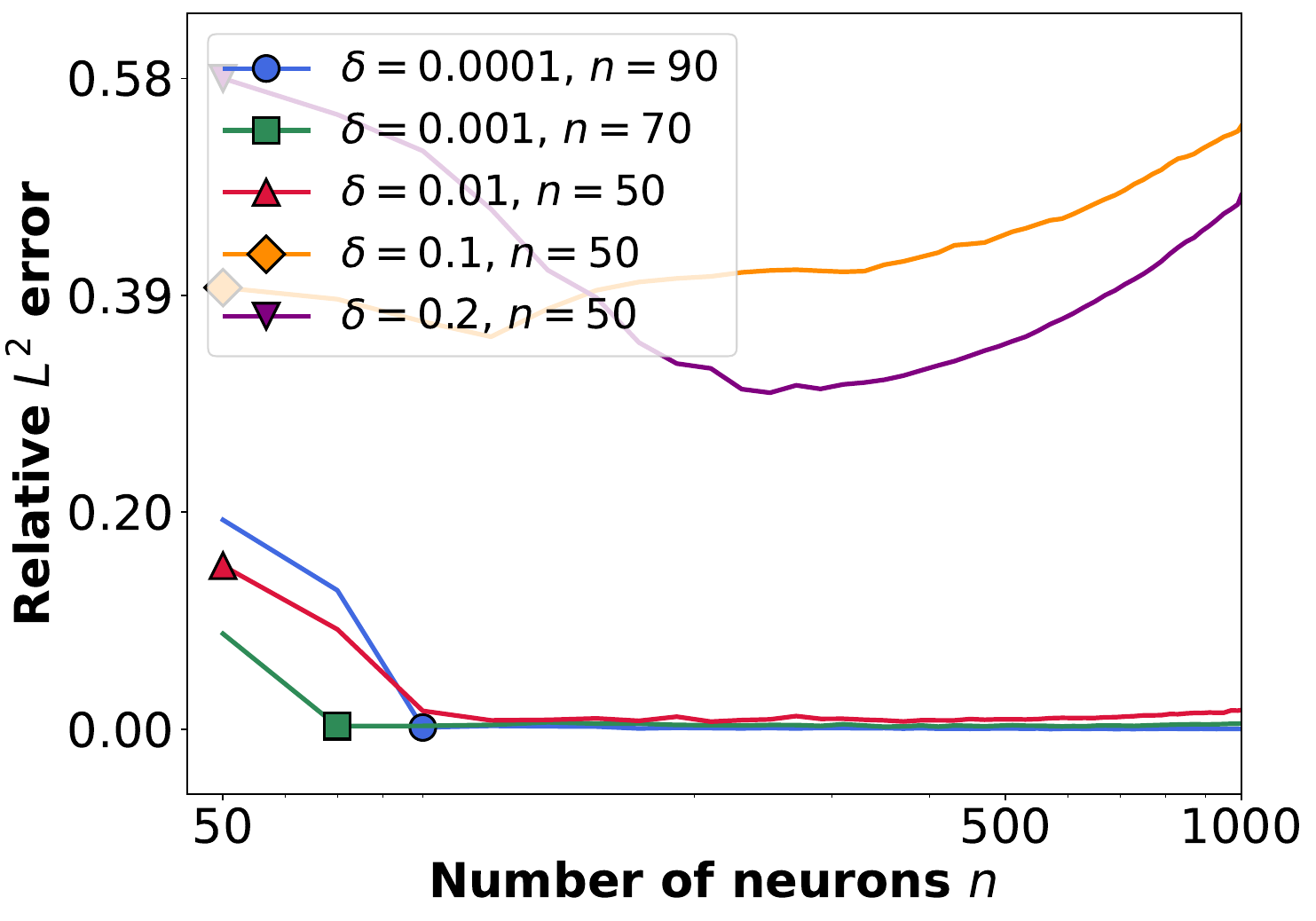}
	\end{minipage}

    \begin{minipage}{0.3\linewidth}
		\centering
		\includegraphics[width=\linewidth]{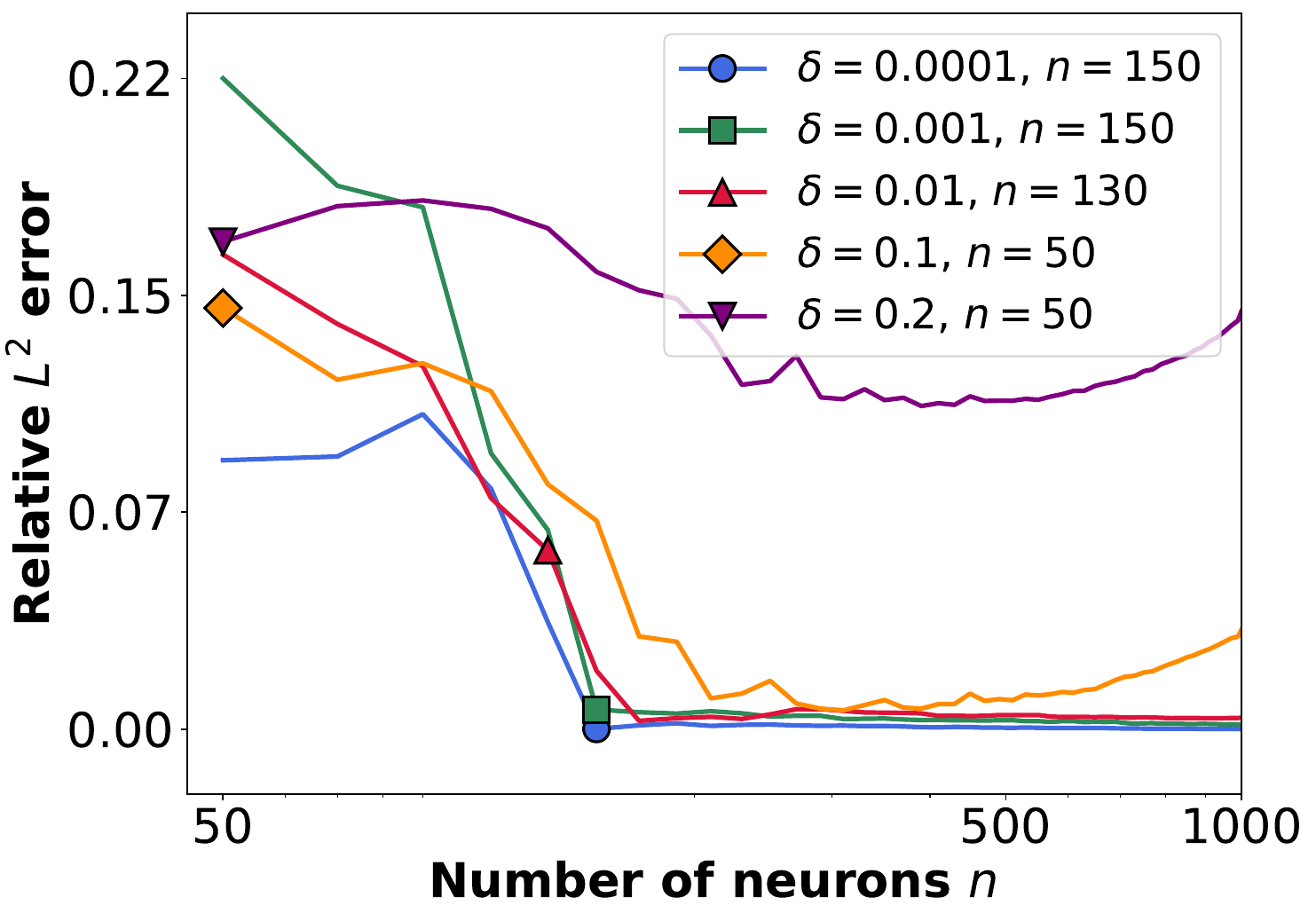}
	\end{minipage}
	\begin{minipage}{0.3\linewidth}
		\centering
		\includegraphics[width=\linewidth]{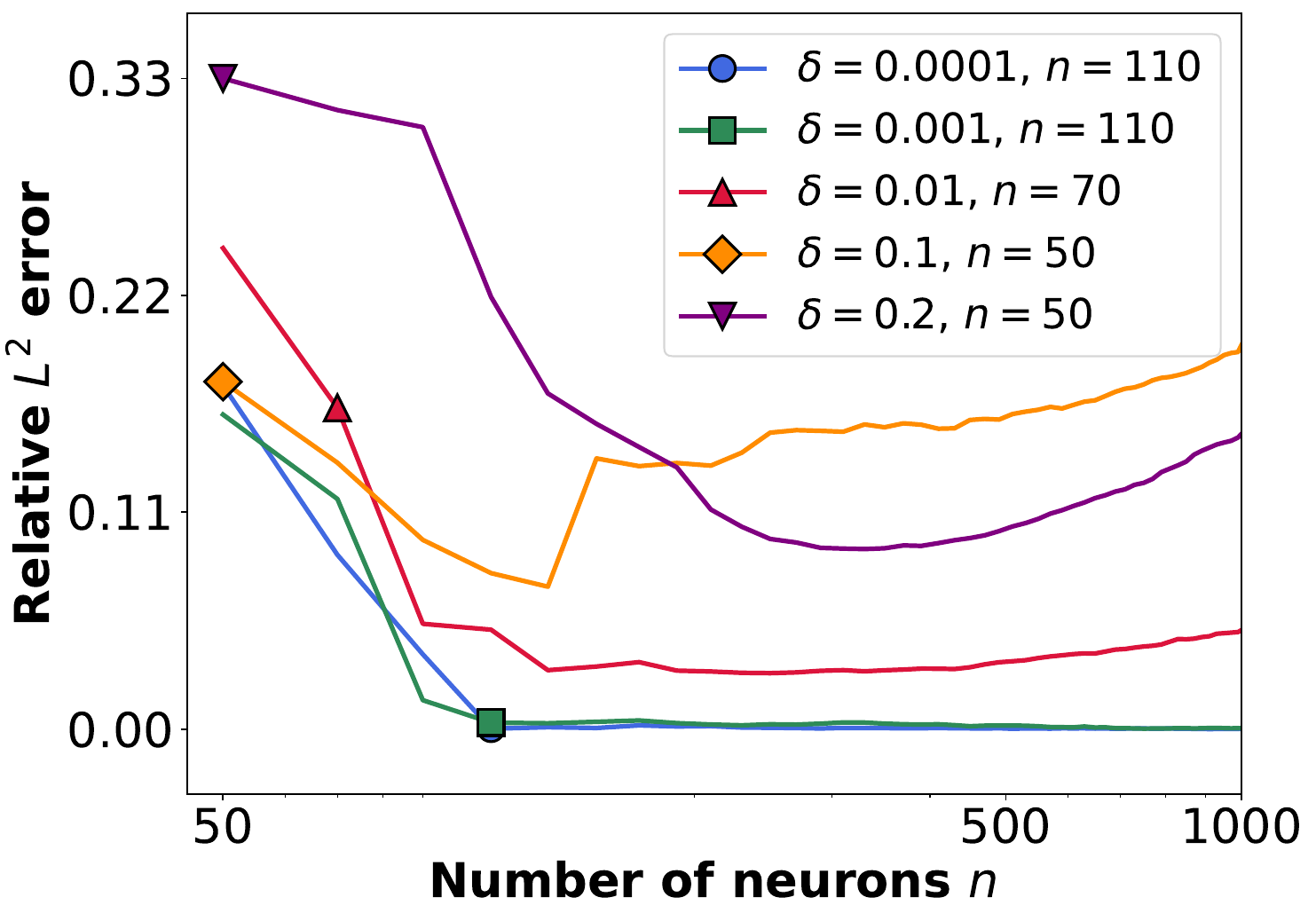}
	\end{minipage}
    \begin{minipage}{0.3\linewidth}
		\centering
		\includegraphics[width=\linewidth]{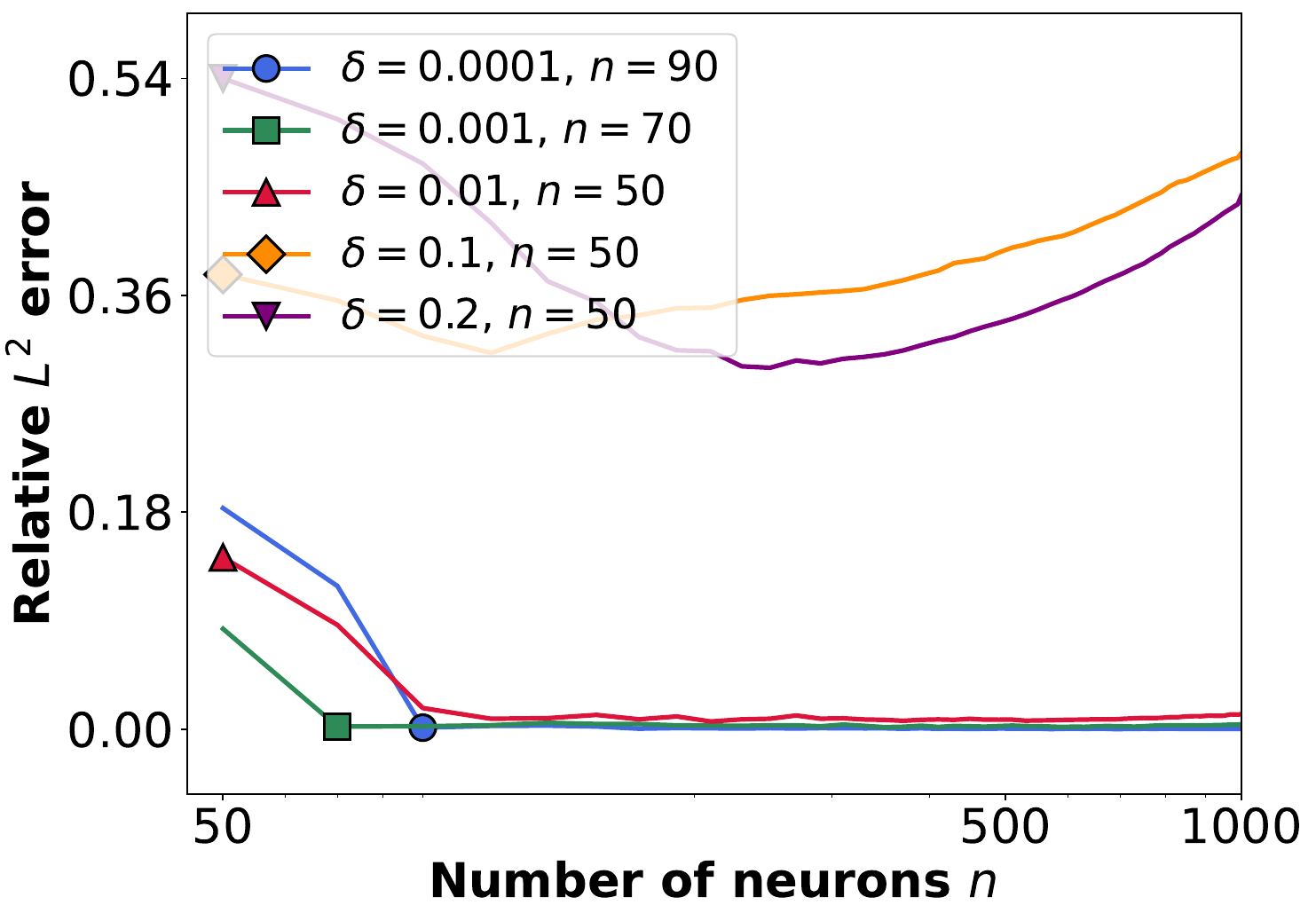}
	\end{minipage}

    \begin{minipage}{0.3\linewidth}
		\centering
		\includegraphics[width=\linewidth]{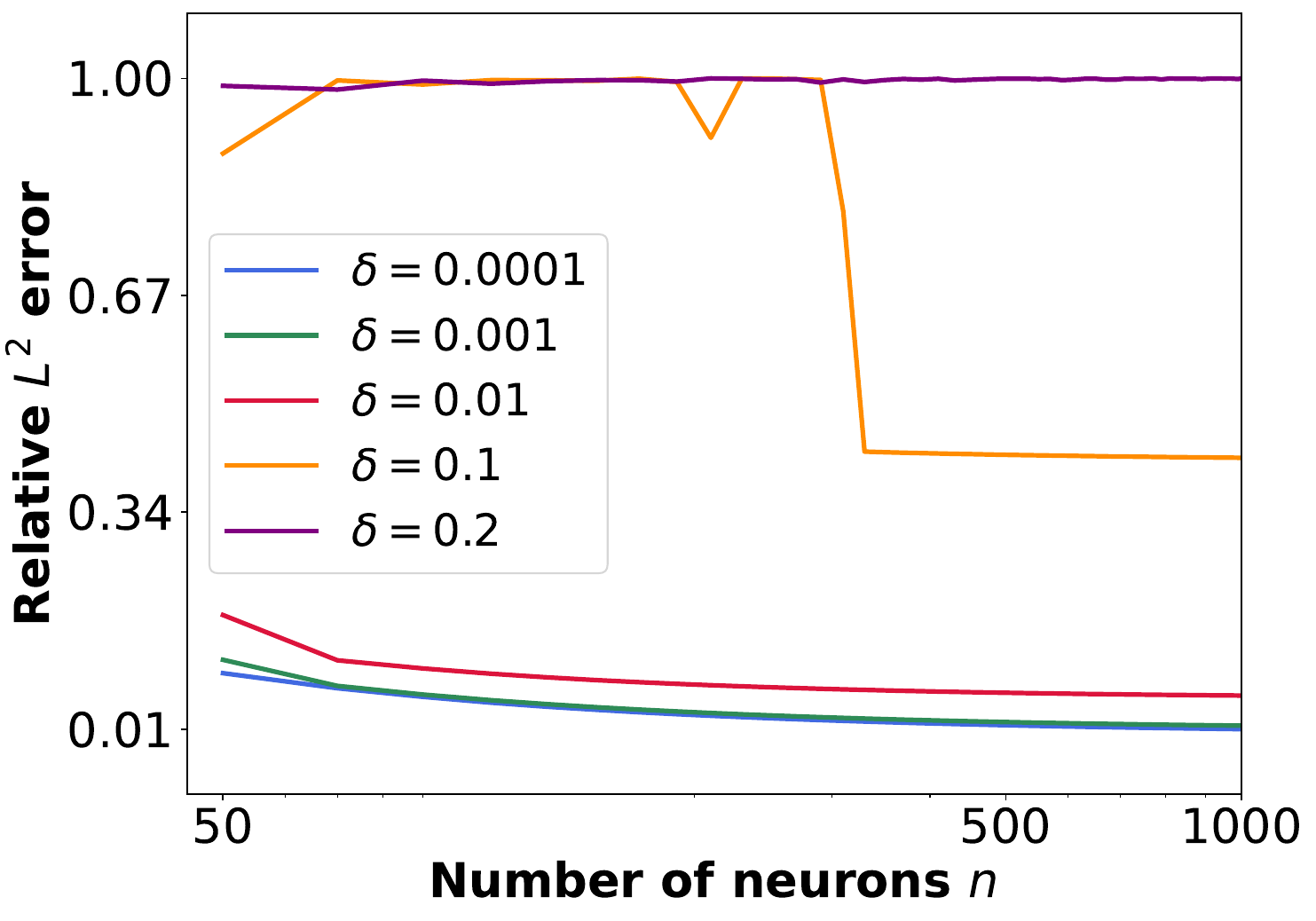}
	\end{minipage}
	\begin{minipage}{0.3\linewidth}
		\centering
		\includegraphics[width=\linewidth]{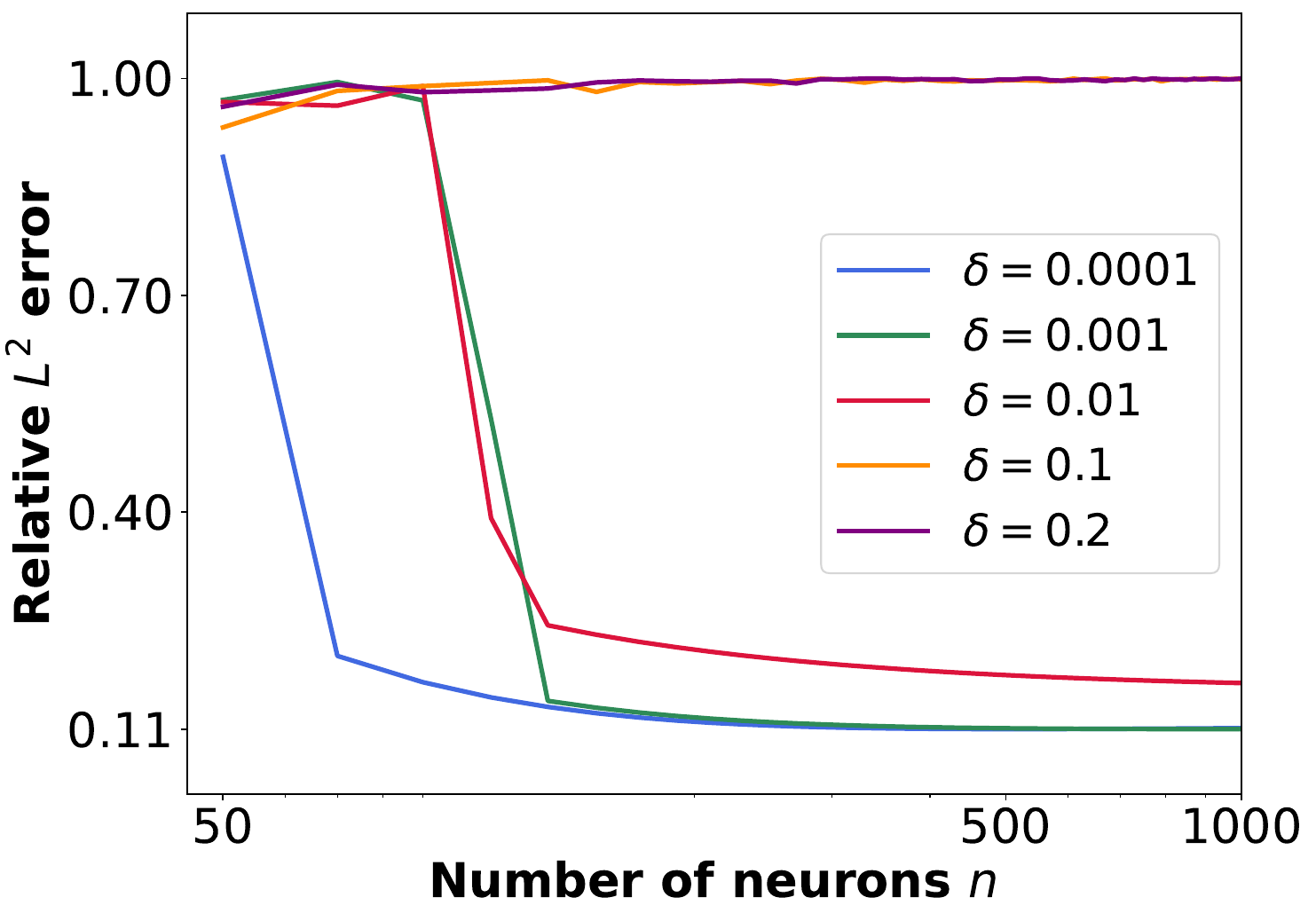}
	\end{minipage}
   \begin{minipage}{0.3\linewidth}
		\centering
		\includegraphics[width=\linewidth]{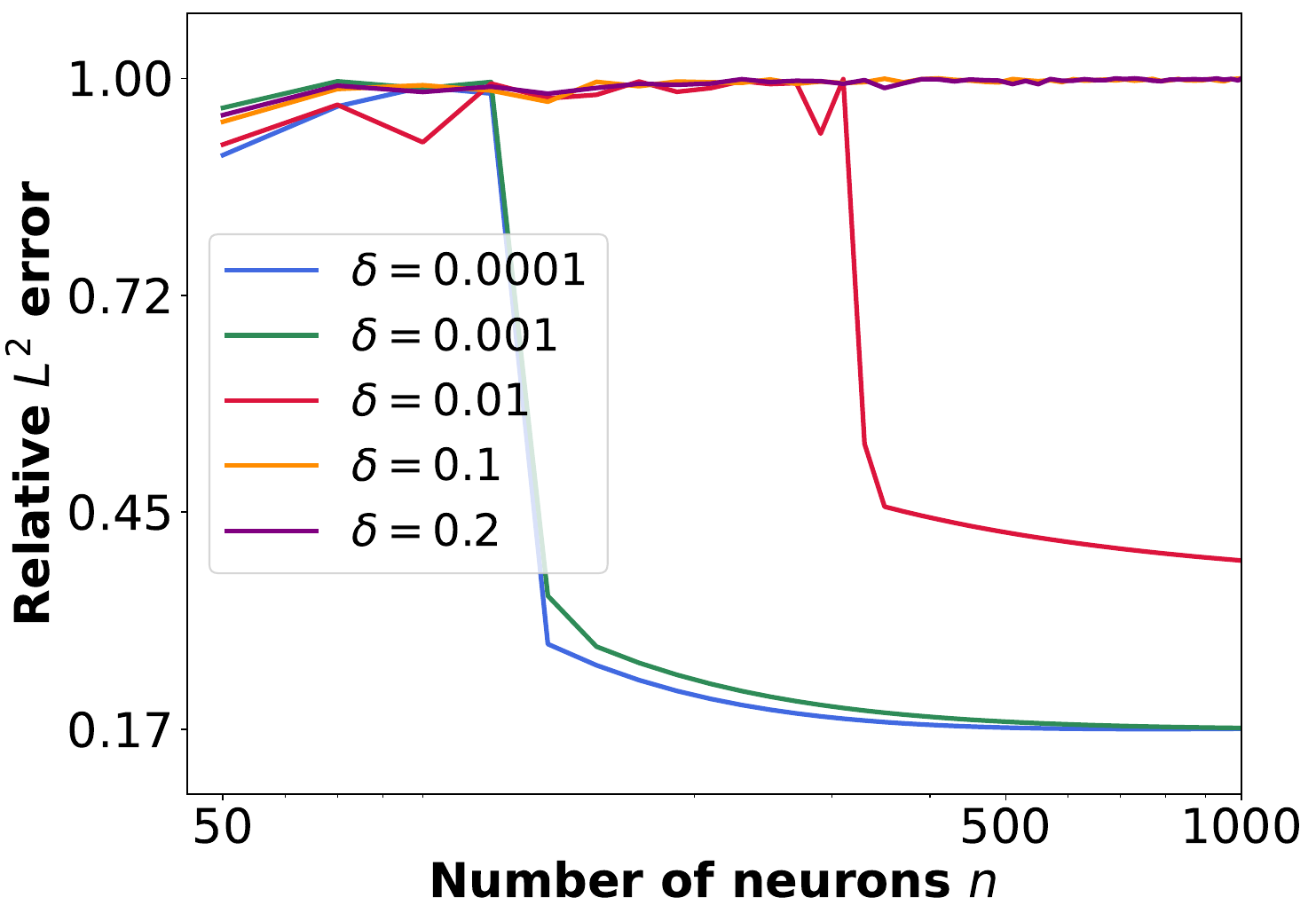}
	\end{minipage}

	\caption{Auto-convolution equation: Relative $L^2$ error of $f_n^{\delta}$ obtained by ENN1 (top row), ENN2 (middle row), and Tikhonov-NN (bottom row). Each column corresponds to a different random seed: 678, 765, 987 (from left to right). The marker on each curve highlights the first network architecture that satisfied the stopping criterion for each noise level, with $\tau=1.0001$. The corresponding value of $n$ (the stopping number) is indicated in the legend. In some figures, the blue circles ($\delta=0.0001$) and green squares ($\delta=0.001$) coincide.}
	\label{fig:auto_relative_L2_error_comparison}
\end{figure}

\begin{example}
\label{Ex3}
The 2D equation of electrical impedance tomography (EIT) \cite{cen2023electrical} is considered on the unit square domain $\Omega=[0,1]^2$:
\begin{align}\label{EIT_equation}
\nabla \cdot\!\big(f(x,y)\,\nabla u(x,y)\big)=0, \quad (x,y)\in\Omega,
\end{align}
where $f(x,y)$ denotes the electrical conductivity, while $u(x,y)$ means the electrical potential. A voltage distribution, denoted by $h(x,y)$, is applied on the boundary $\partial\Omega$, corresponding to the Dirichlet boundary condition $u(x,y)|_{\partial\Omega} = h(x,y)$. This applied voltage induces a boundary current flux $g(x,y)$, which is then measured as the Neumann boundary data:
\[
g(x,y)=f(x,y)\,\frac{\partial u}{\partial \nu}(x,y),\quad (x,y)\in\partial\Omega,
\]
where $\nu$ is the unit outward normal vector to $\partial\Omega$. The Dirichlet-to-Neumann (DN) map is the operator that maps any applied voltage to the resulting measured current:
\[
\Lambda_f:\ u(x,y)\big|_{\partial\Omega}\longmapsto \big(f(x,y)\,\frac{\partial u}{\partial \nu}(x,y)\big)\big|_{\partial\Omega}.
\]

The forward problem in EIT is to compute the boundary current corresponding
to a given conductivity \(f\) and a prescribed boundary voltage \(h\). The associated inverse problem, known as Calderon's problem, seeks to recover \(f\) from the complete DN map. In the theoretical formulation, we take $A(f)=\Lambda_f$. We consider the admissible class
\[
\mathcal D_M
:=
\left\{
f\in W^{1,\infty}(\Omega):
\alpha\le f(\boldsymbol{x})\le \beta
\ \text{for a.e. } \boldsymbol{x}\in\Omega,\ 
\|f\|_{W^{1,\infty}(\Omega)}\le M
\right\},
\qquad 0<\alpha<\beta<\infty .
\]
This class guarantees the uniform ellipticity of \eqref{EIT_equation} and the well-posedness of the DN map. Since \(\mathcal D_M\subset L^\infty(\Omega)\) is uniformly elliptic, the two-dimensional Calderon uniqueness theorem implies that the complete-data operator \(A:f\mapsto\Lambda_f\) is injective on \(\mathcal D_M\) \cite{AstalaPaivarinta2006}. Moreover, by Alessandrini's identity and standard energy estimates, for
\(f,f^\dagger\in\mathcal D_M\),
\[
\|\Lambda_f-\Lambda_{f^\dagger}\|_{\mathcal L(H^{1/2}(\partial\Omega),
H^{-1/2}(\partial\Omega))}
\le C\|f-f^\dagger\|_{L^\infty(\Omega)}.
\]
Since \(\Omega\subset\mathbb R^2\), the Gagliardo--Nirenberg inequality gives
\[
\|f-f^\dagger\|_{L^\infty(\Omega)}
\le C_\Omega
\|f-f^\dagger\|_{L^2(\Omega)}^{1/2}
\|f-f^\dagger\|_{W^{1,\infty}(\Omega)}^{1/2}
\le C_{\Omega,M}\|f-f^\dagger\|_{H^1(\Omega)}^{1/2}.
\]
Consequently,
\[
\|\Lambda_f-\Lambda_{f^\dagger}\|_{\mathcal L(H^{1/2}(\partial\Omega),
H^{-1/2}(\partial\Omega))}
\le C\|f-f^\dagger\|_{H^1(\Omega)}^{1/2},
\qquad f\in\mathcal D_M.
\]
Thus \(A\) is locally H\"older continuous with respect to the
\(H^1(\Omega)\) metric on \(\mathcal D_M\), with H\"older exponent \(1/2\).

In the numerical implementation, the complete DN map is replaced by a finite number of boundary measurements. For example, for one prescribed voltage pattern $h_1$, we use $A_1(f)=\Lambda_f h_1$, where
\[
h_1(x,y)=
\begin{cases}
\sin(\pi x), & y=0,\\
0, & \text{otherwise on } \partial\Omega .
\end{cases}
\]
More generally, for voltage patterns $h_1,\ldots,h_m$, the numerical measurement operator is $A_m(f)=\bigl(\Lambda_f h_1,\ldots,\Lambda_f h_m\bigr)$.

The preceding theoretical discussion concerns the complete-data forward map \(A(f)=\Lambda_f\), for which injectivity is available on the uniformly elliptic admissible class \(\mathcal D_M\). The operator \(A_m\) is used only for the numerical realization, as a practical finite-dimensional measurement model for the EIT experiment.

In the present example, the true conductivity is $f^\dagger(x,y)=x+y+p_0$ with $p_0>0$ on $\Omega=[0,1]^2$, so $f^\dagger$ is uniformly elliptic and essentially bounded: $0<p_0\le f^\dagger\le p_0+2$. Here, $p_0$ is set to 1.
\end{example}

\begin{figure}[htbp]
	\centering
	\begin{minipage}{0.3\linewidth}
		\centering
		\includegraphics[width=\linewidth]{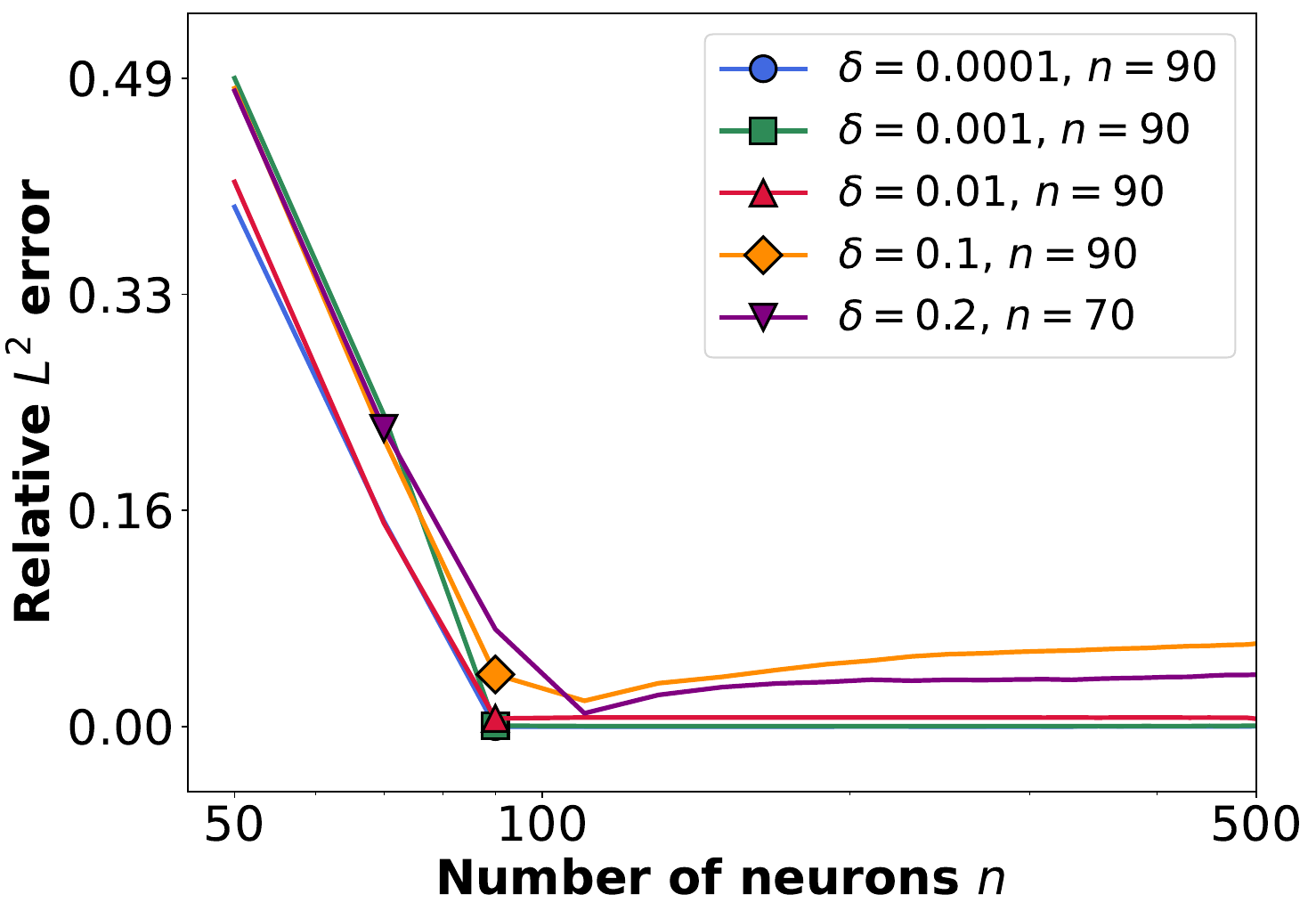}
	\end{minipage}
	\begin{minipage}{0.3\linewidth}
		\centering
		\includegraphics[width=\linewidth]{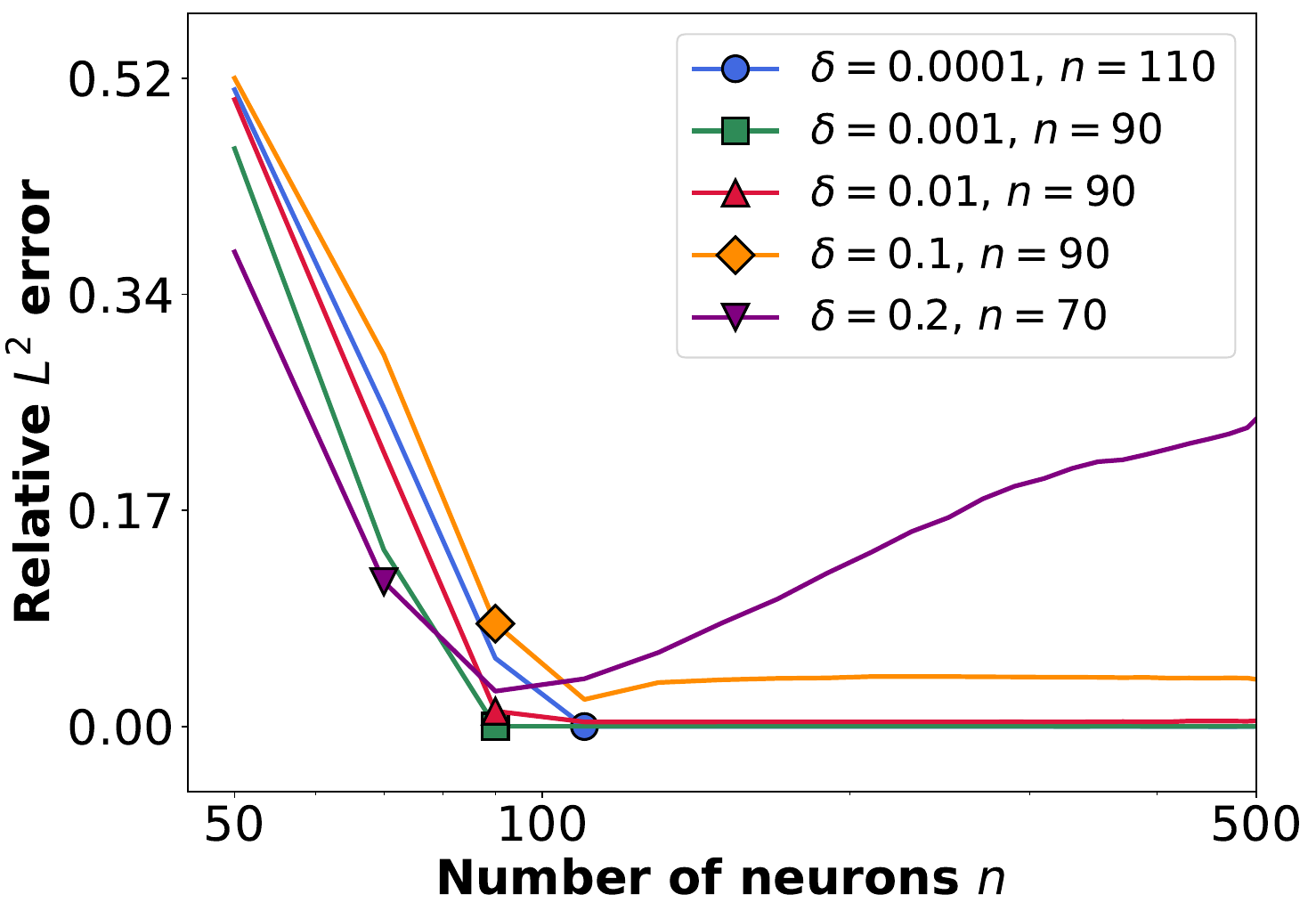}
	\end{minipage}
    \begin{minipage}{0.3\linewidth}
		\centering
		\includegraphics[width=\linewidth]{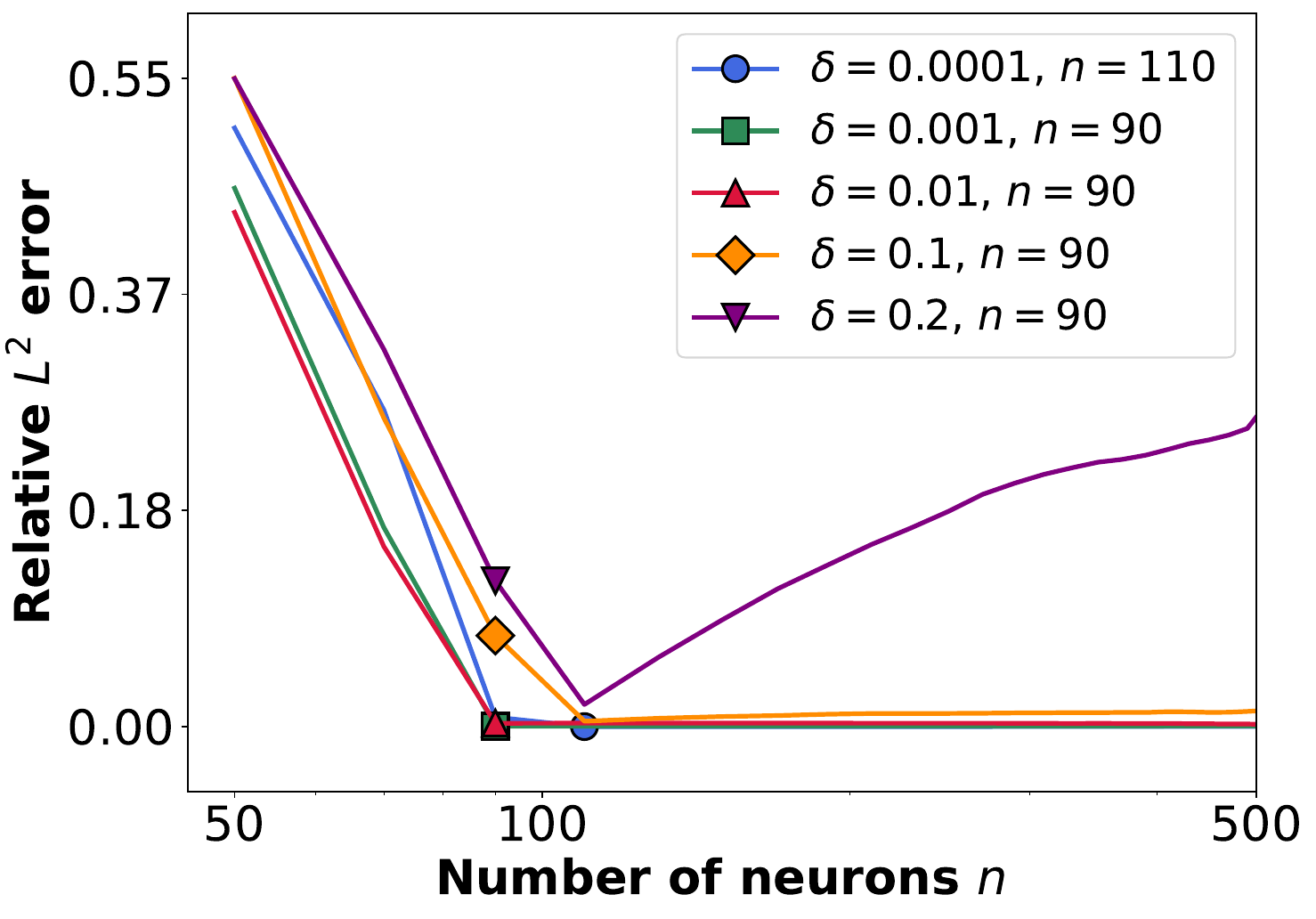}
	\end{minipage}
	
	\begin{minipage}{0.3\linewidth}
		\centering
		\includegraphics[width=\linewidth]{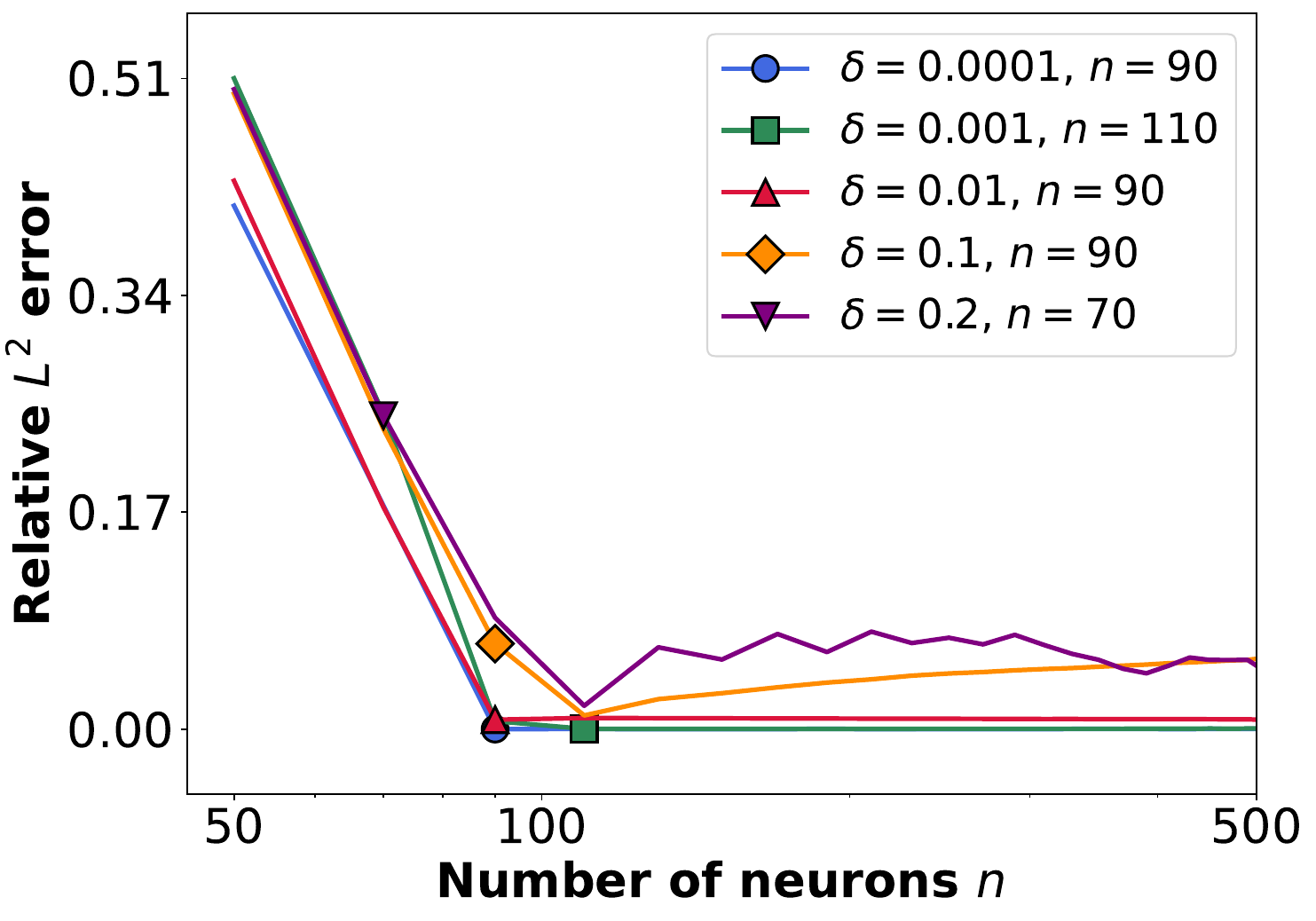}
	\end{minipage}
	\begin{minipage}{0.3\linewidth}
		\centering
		\includegraphics[width=\linewidth]{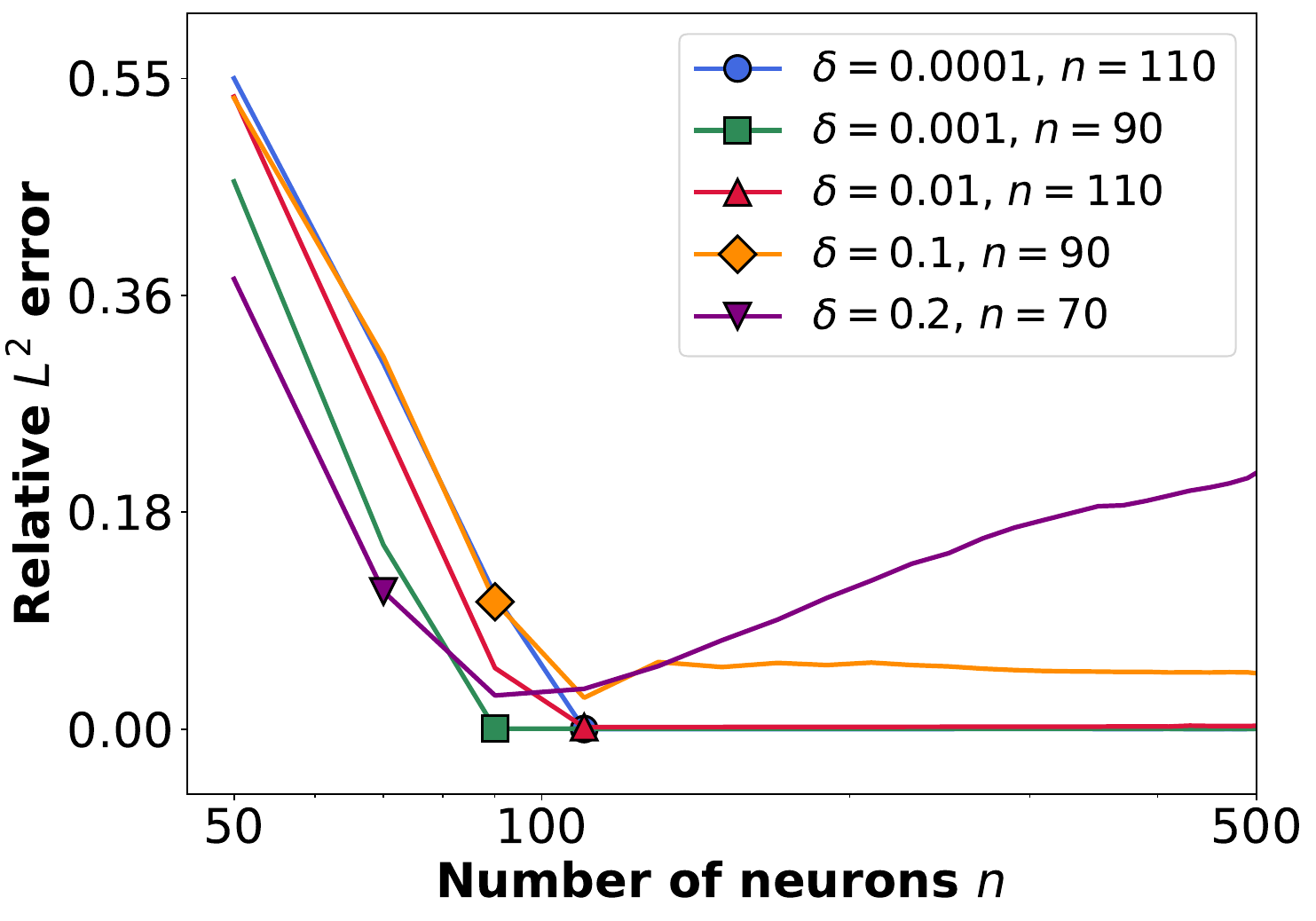}
	\end{minipage}
    \begin{minipage}{0.3\linewidth}
		\centering
		\includegraphics[width=\linewidth]{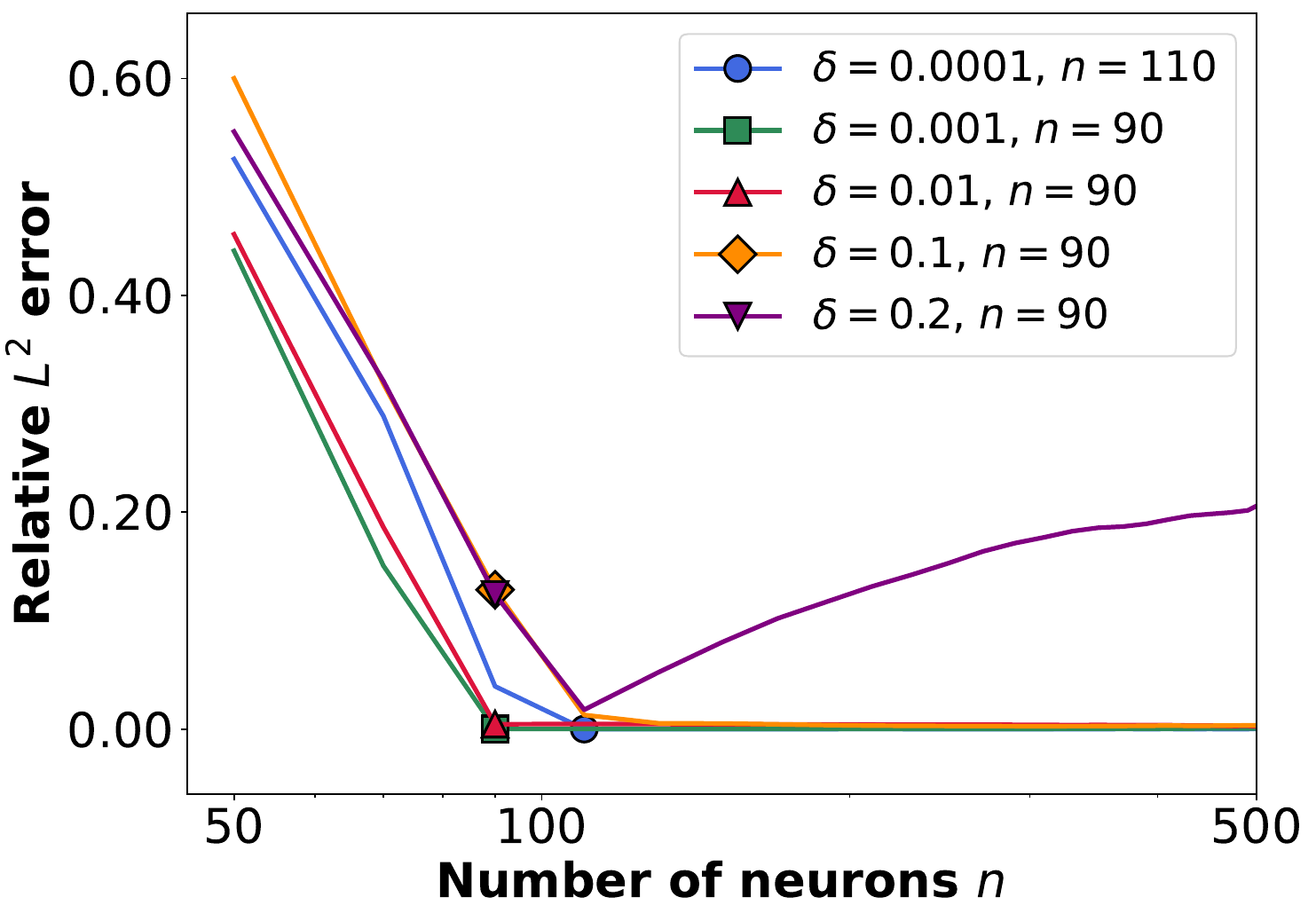}
	\end{minipage}
	
	\begin{minipage}{0.3\linewidth}
		\centering
		\includegraphics[width=\linewidth]{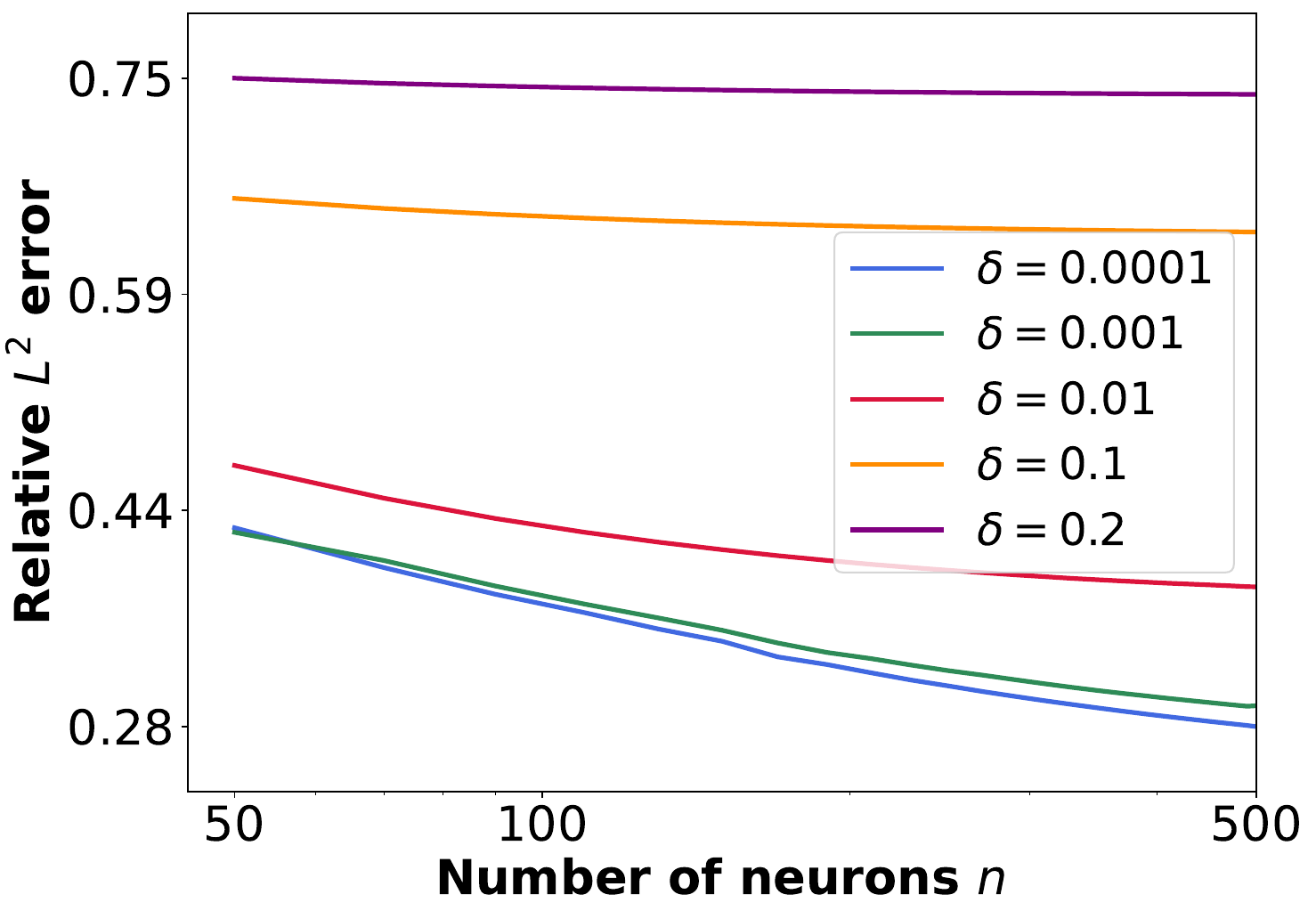}
	\end{minipage}
	\begin{minipage}{0.3\linewidth}
		\centering
		\includegraphics[width=\linewidth]{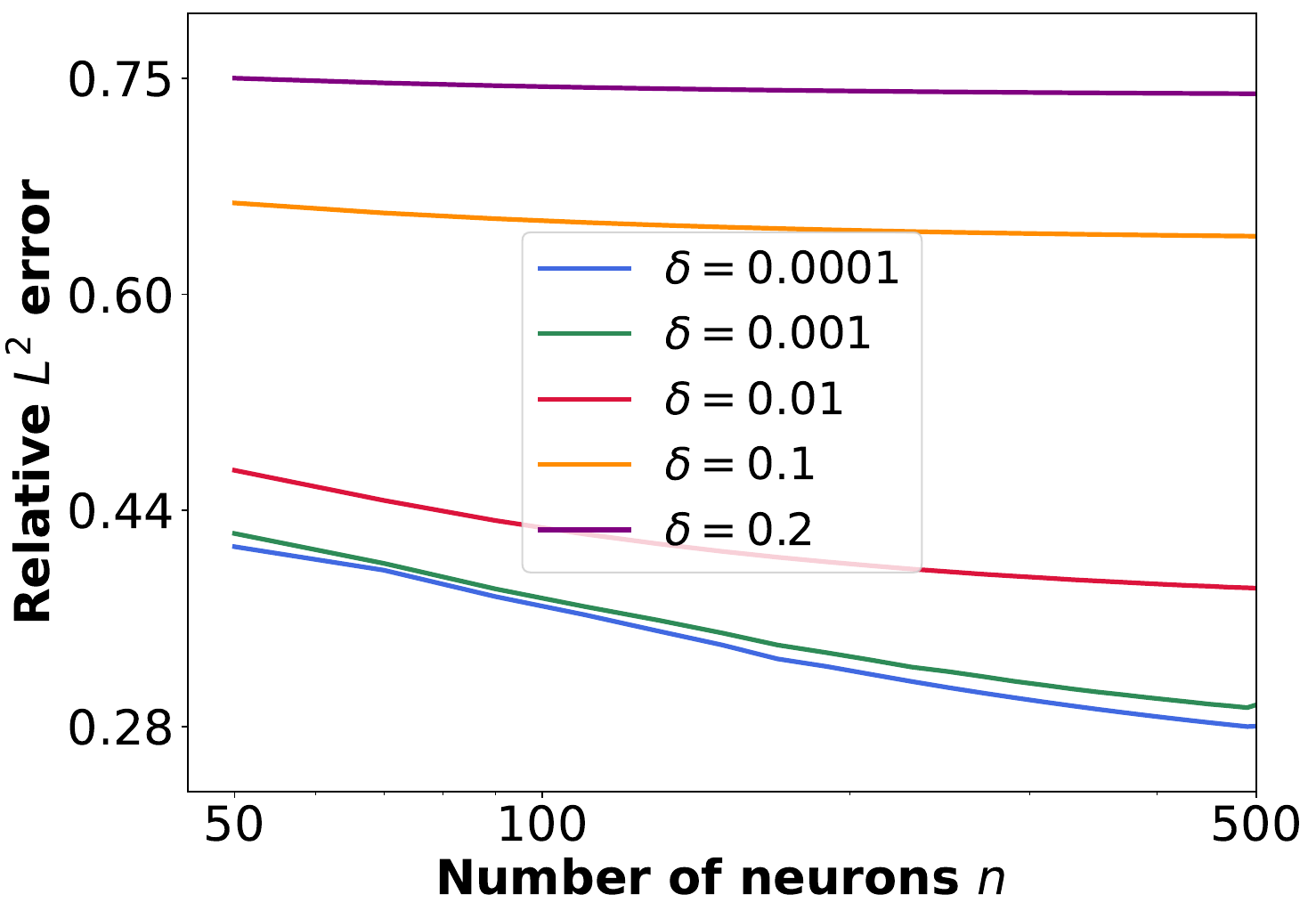}
	\end{minipage}
    \begin{minipage}{0.3\linewidth}
		\centering
		\includegraphics[width=\linewidth]{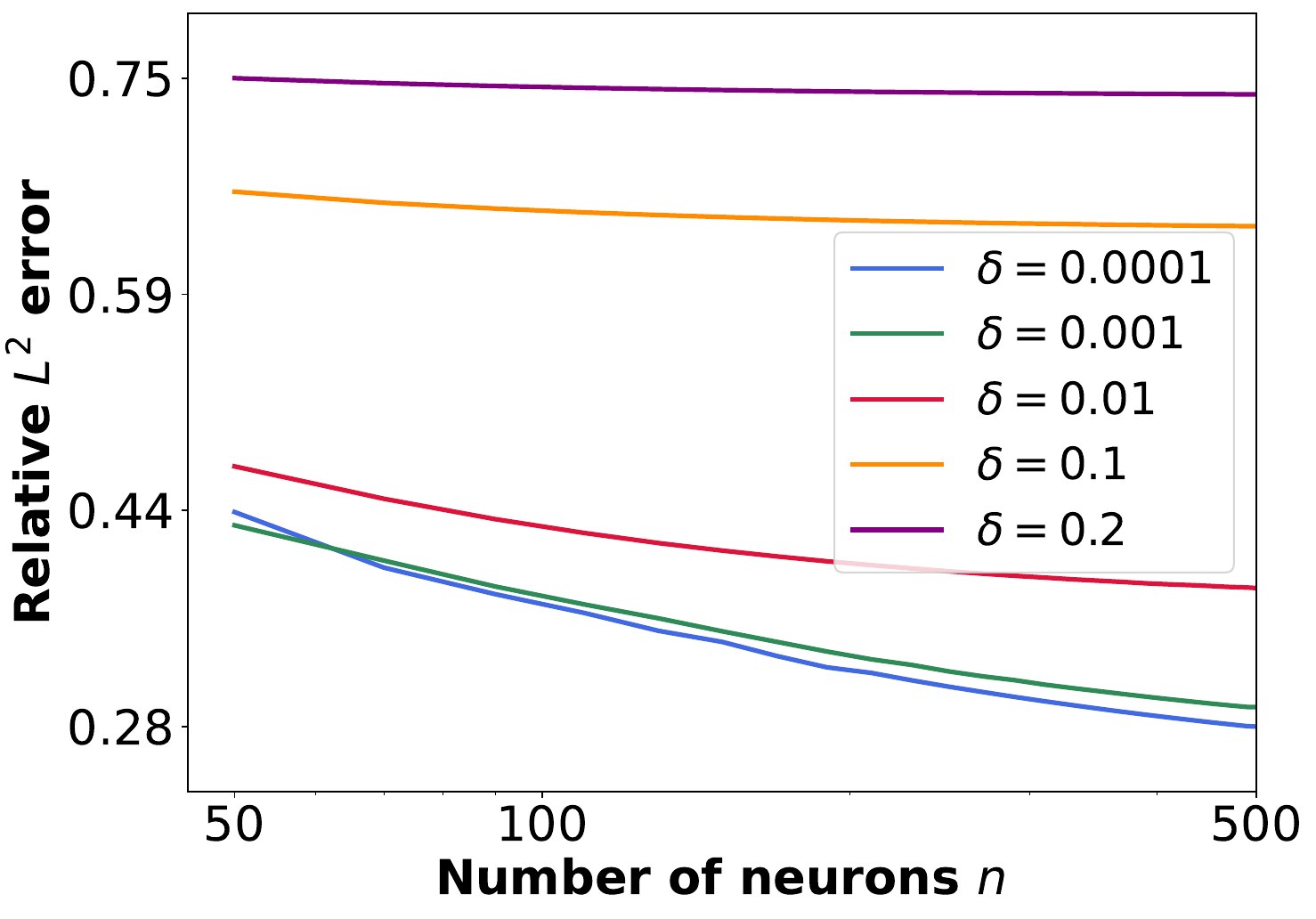}
	\end{minipage}

	\caption{EIT: Relative $L^2$ error of $f_n^{\delta}$ obtained by ENN1  (top row), ENN2  (middle row), and Tikhonov-NN (bottom row). Each column corresponds to a different random seed: 20, 30, 40 (from left to right). The marker on each curve highlights the first network architecture that satisfied the stopping criterion for each noise level, with $\tau=1.0001$. The corresponding value of $n$ (the stopping number) is indicated in the legend. In some figures, the blue circles ($\delta=0.0001$) and green squares ($\delta=0.001$) coincide.}
	\label{fig:EIT_relative_L2_error_comparison}
\end{figure}

The numerical results for Examples \ref{Ex1}-\ref{Ex3} are reported in Figures \ref{fig:noa_error_comparison}, \ref{fig:auto_relative_L2_error_comparison}, and \ref{fig:EIT_relative_L2_error_comparison}, respectively. These plots show how the relative $L^2$ error of the neural network approximations produced by the three methods evolves with the number of neurons under different random seeds.

To begin with, we draw attention to a phenomenon observed in Figure \ref{fig:noa_error_comparison}: in the first column, the solutions generated by ENN1 and ENN2 display a pronounced instability, namely, the relative error increases with the number of neurons $n$ rather than decreasing. This effect reflects the intrinsic ill-posedness of the underlying inverse problem: as the network capacity grows, the approximation becomes increasingly sensitive to noise and numerical perturbations, which manifests as instability and error amplification. By contrast, Tikhonov-NN effectively suppresses this phenomenon. The improvement arises from the presence of the regularization term, which imposes an additional constraint that balances the fidelity to the data with the stability of the solution, thereby damping spurious oscillations and preventing uncontrolled error growth. This underscores the stabilizing effect of Tikhonov regularization, which provides a stronger balance between accuracy and stability than the alternative regularization strategies.

Beyond this specific behavior, more general trends can be identified across the results. From the second and third columns of Figure \ref{fig:noa_error_comparison}, and throughout Figures \ref{fig:auto_relative_L2_error_comparison} and \ref{fig:EIT_relative_L2_error_comparison}, it can be observed that, in general, the error of the neural network approximation decreases as the noise level $\delta$ decreases and as the number of neurons $n$ increases. However, for larger noise levels (e.g., $\delta=0.1,0.2$), the approximation error first decreases and then increases with growing $n$, leading to a typical semi-convergence, or U-shaped, behavior. This observation suggests that, under high noise conditions, a simpler and smaller network architecture can already achieve satisfactory performance, while blindly increasing the network size does not necessarily improve the fitting accuracy and may instead lead to overfitting or degraded performance. 

In addition, the points marked for the stopping index $n$ of ENN1 and ENN2 indicate that both algorithms successfully terminate at finite iteration steps under all three random seed settings once the stopping criterion is met. In high-noise cases, the discrepancy threshold is larger, and the algorithm may stop before the true reconstruction error reaches its minimum. This is consistent with the classical semi-convergence phenomenon: the discrepancy principle is designed to ensure stability rather than to minimize the unknown true error. Moreover, the specific values of the stopping $n$ marked in the figures reflect that $n=n(\delta)$ generally increases as $\delta$ decreases, and are qualitatively consistent with the theoretical estimate $\mathcal{O}(1/\delta^2)$ provided in Theorems \ref{main} and \ref{main2}. 

Regarding the Tikhonov-NN method, we examine the last row of the three figures. It can be observed that the error generally decreases initially and then stabilizes as the number of neurons increases when the noise level is small. In some cases, the error continues to decrease without stabilizing, which may be attributed to the limitation of the maximum number of neurons ($n_{\max}$) considered in the experiments. Moreover, for relatively large noise levels (e.g., $\delta=0.1,0.2$), the error is mainly dominated by the noise. Under such circumstances, as observed in some of the presented figures,  increasing the number of neurons does not significantly reduce the approximation error, since the noise-induced instability persists regardless of model capacity. Overall, in the non-blow-up cases, ENN1 and ENN2 as well as the Tikhonov-NN method are all capable of constructing neural network approximations with good accuracy. In comparison, ENN1 and ENN2 tend to achieve approximations with smaller errors using relatively fewer neurons than the Tikhonov-NN approach, whereas the Tikhonov-NN method provides greater robustness and stability.

\section{Conclusion and outlook}
\label{sec:Con}

In this work, we have developed a regularization framework for solving ill-posed inverse problems by shallow neural network approximations. In contrast to a fixed-architecture formulation, we considered expanding two-layer network classes with increasing width and controlled representation cost, so that the network size can be coupled with the noise level. For iterative regularization, we develop ENNs and prove its regularization properties under different assumptions. As a by-product, we show that for data with high noise levels, a small network architecture is sufficient to obtain a good approximate solution. For variational regularization, we analyze neural network approximation within the framework of Tikhonov regularization. Under variational source conditions, we establish convergence rate results. In summary, this paper demonstrates that neural networks, when integrated with a slightly modified conventional regularization framework, can serve as effective regularization methods with theoretical guarantees. However, several challenges remain.
\begin{itemize}
    \item Our theoretical analysis focuses on shallow (two-layer) neural networks in the classical Barron-space setting. Within this framework, we establish a regularization theory for neural network approximations of inverse problem solutions: for a given noise level $\delta$, we show that a corresponding network width $n=n(\delta)$ can be selected so that the resulting neural network approximation converges to the exact solution as $\delta\to0$. Under variational source conditions, this convergence is further quantified by rates of order $\Phi(\tau\delta)$. A current limitation is that the depth is fixed at $l=2$, hence the approximation error scaling as $\mathcal{O}(1/\sqrt{n})$ (Proposition \ref{Approximtion theorem}), does not improve with increasing depth. Extending the analysis to networks with greater depth $l$ is an important next step, with the goal of establishing convergence properties and rates when both $n=n(\delta)$ and $l=l(\delta)$ are adaptively chosen. However, a principal obstacle to extending our method to deep networks is that existing universal approximation theorems for DNNs rarely provide explicit, uniform bounds on network parameters (e.g., layerwise max-norm radii). Such bounds are needed so the algorithm searches within a radius-constrained network class where the feasible set $X_{n,r}$ is compact, ensuring existence of minimizers and enabling convergence to the exact solution. This motivates the incorporation of parameter-bounded deep neural network approximation results into the regularization framework, with the aim of establishing convergence properties for depth-adaptive neural network regularization methods.
   \item Beyond the ridge Barron models considered here, recent studies on graph convolutional neural networks (GCNNs) (e.g., \cite{chung2023barron}) have introduced shallow models of the form $f_M(\mathbf{x},\Theta) = \frac{1}{M} \sum_{m=1}^M \mathbf{a}_m^{\!\top} \, \sigma(\mathbf{b}_m *\mathbf{x} + c_m), \quad \mathbf{x} \in \Omega$. Analogous to these neural networks, the corresponding function class is referred to as the \emph{graph Barron space} $\mathcal{B}_G$. It has been shown to form a reproducing kernel Banach space, and in addition to satisfying a Lipschitz property, it enjoys a fundamental approximation theorem guaranteeing that any function in $\mathcal{B}_G$ can be well-approximated by GCNN outputs. Incorporating this graph-convolutional extension into our framework represents a promising direction for future research.
    \item One particularly important open problem is the development of efficient optimization algorithms to solve the non-convex optimization problems \eqref{OPT-ENN1}, \eqref{OPT-ENN2}, and \eqref{Tikfunc}. Another key issue is the numerical implementation of the regularization stabilizer $\mathcal{R}_n(f)$ in Tikhonov regularization \eqref{Tikfunc}. The current definition of $\mathcal{R}_n(f)$ is derived from theoretical analysis, but it is nonlinear and difficult to implement using conventional optimization algorithms. Finding a suitable surrogate that meets both theoretical requirements and practical feasibility remains an intriguing and open challenge.
\end{itemize}

\section*{Acknowledgements}
This work was funded by the Shenzhen Sci-Tech Fund (No. RCJC20231211090030059), National Key Research and Development Program of China (No. 2025YFE0113400) and National Natural Science Foundation of China (No. W2421102).




\appendix
\section{Proof of lemmas and proposition in Section \ref{Preliminaries}}
\label{App:pre}
\subsection{Proof of Lemma \ref{lemma compact set}}
\begin{proof}
It suffices to verify that, for every sequence 
$\{f_n(a^k,\boldsymbol b^k,c^k;\mathbf x)\}_{k=1}^{\infty}
\subset X_{n,r(n)}$, there exists a convergent subsequence whose limit
belongs to $X_{n,r(n)}$. For each $k$, the parameters of 
$f_n(a^k,\boldsymbol b^k,c^k;\mathbf x)$ can be written as
$(a^k,\boldsymbol b^k,c^k)
:=
\bigl((a_j^k,\boldsymbol b_j^k,c_j^k)\bigr)_{j=1}^n
\in (M_r)^n $. Since $M_r$ is compact, $(M_r)^n$ is compact. Hence there exists a subsequence
$\{(a^{k_\ell},\boldsymbol b^{k_\ell},c^{k_\ell})\}_{\ell=1}^{\infty}$
and a limit $(\tilde a,\tilde{\boldsymbol b},\tilde c)
:=
\bigl((\tilde a_j,\tilde{\boldsymbol b}_j,\tilde c_j)\bigr)_{j=1}^n
\in (M_r)^n$ such that $(a^{k_\ell},\boldsymbol b^{k_\ell},c^{k_\ell})
\to
(\tilde a,\tilde{\boldsymbol b},\tilde c)
\quad\text{as } \ell\to\infty
\quad\text{in }(\mathbb R\times\mathbb R^d\times\mathbb R)^n$ .

Since $K$ is bounded, there is $R_K>0$ with $K\subset\{x:\|x\|_\infty\le R_K\}$. Let $C_R:=\max\{R_K,1\}$. Since $t\mapsto t_+$ is $1$-Lipschitz and $\|\boldsymbol b\|_1+|c|=1$, for every $x\in K$, we have
\begin{align}
&\bigl|f_n(a^{k_{\ell}},\boldsymbol b^{k_{\ell}},c^{k_{\ell}};\mathbf{x})-f_n(\tilde a,\tilde{\boldsymbol b},\tilde c;\mathbf{x})\bigr|\notag\\
&\le \frac{1}{n}\sum_{j=1}^n \bigl|a_j^{(k_{\ell})}-\tilde a_j\bigr|\,
        \bigl| \bigl((\boldsymbol b_j^{(k_{\ell})})^T \mathbf{x} + c_j^{(k_{\ell})}\bigr)_+ \bigr|
   \;+\; \frac{1}{n}\sum_{j=1}^n \bigl|\tilde a_j\bigr|\,
        \bigl| \bigl((\boldsymbol b_j^{(k_{\ell})})^T \mathbf{x} + c_j^{(k_{\ell})}\bigr)_+
              - \bigl(\tilde{\boldsymbol b}_j^T \mathbf{x} + \tilde c_j\bigr)_+ \bigr| \notag\\
&\le \frac{1}{n}\sum_{j=1}^n \bigl|a_j^{(k_{\ell})}-\tilde a_j\bigr|\,
        \bigl( \bigl|(\boldsymbol b_j^{(k_{\ell})})^T \mathbf{x}\bigr| + \bigl|c_j^{(k_{\ell})}\bigr| \bigr)
   \;+\; \frac{1}{n}\sum_{j=1}^n \bigl|\tilde a_j\bigr|\,
        \bigl( \bigl|(\boldsymbol b_j^{(k_{\ell})}-\tilde{\boldsymbol b}_j)^T \mathbf{x}\bigr|
              + \bigl|c_j^{(k_{\ell})}-\tilde c_j\bigr| \bigr) \notag\\
              \label{eqlemma1}
&\le C_R\Biggl[
      \frac{1}{n}\sum_{j=1}^n \bigl|a_j^{(k_{\ell})}-\tilde a_j\bigr|
    + \frac{1}{n}\sum_{j=1}^n \bigl|\tilde a_j\bigr|
      \bigl(\|\boldsymbol b_j^{(k_{\ell})}-\tilde{\boldsymbol b}_j\|_1
           + \bigl|c_j^{(k_{\ell})}-\tilde c_j\bigr|\bigr)
    \Biggr],
\end{align}
Hence $||f_n(a^{k_{\ell}},\boldsymbol b^{k_{\ell}},c^{k_{\ell}};\mathbf{x})-f_n(\tilde a,\tilde{\boldsymbol b},\tilde c;\mathbf{x})||_{C(K)}\to 0$, which means $f_n\left(a^{k_{\ell}},\boldsymbol{b}^{k_{\ell}},c^{k_{\ell}};\mathbf{x}\right)\to f_n(\tilde a,\tilde{\boldsymbol b},\tilde c;\mathbf{x})$ in $C(K)$, proving sequential compactness. Moreover, we note that $f_n(\tilde a,\tilde{\boldsymbol b},\tilde c;\mathbf{x})\in\mathcal{B}_1$ for the reason that $\frac{1}{n}\sum_{j=1}^{n}|\tilde a_j|(\|\boldsymbol{\tilde b}_j\|_1+|\tilde c_j|)=\frac{1}{n}\sum_{j=1}^{n}|\tilde a_j|\le r(n)<+\infty$.
Since $\Omega$ is bounded, the map $C(K)\to L^2(\Omega)$ is continuous with
$\|f\|_{L^2(\Omega)}\le |\Omega|^{1/2}\|f\|_{C(K)}$. Hence $X_{n,r(n)}$ is also sequentially compact in $L^2(\Omega)$.
\end{proof}

\subsection{Proof of Lemma \ref{lem:barron_compact_lsc}}
\begin{proof}
Let $R_K:=\sup_{\mathbf x\in K}\|\mathbf x\|_\infty<\infty$. Since
$\|\boldsymbol b_j^{(m)}\|_1+|c_j^{(m)}|=1$, we have
$|(\boldsymbol b_j^{(m)})^T\mathbf x+c_j^{(m)}|\le R_K+1$ for
$\mathbf x\in K$. Hence
\[
|f_m(\mathbf x)|
\le q_m(R_K+1)
\le Q(R_K+1).
\]
Moreover, since $t\mapsto t_+$ is $1$-Lipschitz,
\[
|f_m(\mathbf x)-f_m(\mathbf y)|
\le q_m\|\mathbf x-\mathbf y\|_\infty
\le Q\|\mathbf x-\mathbf y\|_\infty .
\]
Thus $\{f_m\}$ is uniformly bounded and equicontinuous on $K$. By the
Arzelà--Ascoli theorem, it is relatively compact in $C(K)$.

Now suppose that $f_{m_k}\to f$ in $C(K)$. Passing to a further
subsequence if necessary, assume that $q_{m_k}\to
L:=\liminf_{k\to\infty}q_{m_k}$. If $L=0$, then
$\|f_{m_k}\|_{C(K)}\le q_{m_k}(R_K+1)\to0$, so $f=0$, and the conclusion
is immediate.

Assume $L>0$. For all sufficiently large $k$, $q_{m_k}>0$. Define
$\alpha_j^{(m_k)}:=|a_j^{(m_k)}|/(n_{m_k}q_{m_k})$ and the probability
measure
\[
\rho_k
:=
\sum_{j=1}^{n_{m_k}}
\alpha_j^{(m_k)}
\delta_{\left(
\operatorname{sign}(a_j^{(m_k)})q_{m_k},
\boldsymbol b_j^{(m_k)},c_j^{(m_k)}
\right)} .
\]
Then $f_{m_k}(\mathbf x)
=
\int a(\boldsymbol b^T\mathbf x+c)_+
\,d\rho_k(a,\boldsymbol b,c)$. For every $\varepsilon>0$, $q_{m_k}\le L+\varepsilon$ for all large $k$. Hence the supports of $\rho_k$ are eventually contained in the compact set $K_{L+\varepsilon}
=
\{(a,\boldsymbol b,c): |a|\le L+\varepsilon,\ 
\|\boldsymbol b\|_1+|c|\le1\}$. Passing to a further subsequence, $\rho_k\rightharpoonup\rho^*$ weakly as
probability measures on $K_{L+\varepsilon}$. Since
$(a,\boldsymbol b,c)\mapsto a(\boldsymbol b^T\mathbf x+c)_+$ is continuous
and bounded on this compact set, we obtain
\[
f(\mathbf x)
=
\int a(\boldsymbol b^T\mathbf x+c)_+
\,d\rho^*(a,\boldsymbol b,c).
\]
Thus $f\in\mathcal B_1$. Moreover, $\rho^*$ is supported in
$K_{L+\varepsilon}$, and therefore $\int |a|(\|\boldsymbol b\|_1+|c|)\,d\rho^* \le L+\varepsilon$. Taking the infimum over all Barron representations and then letting $\varepsilon\downarrow0$ gives
$\|f\|_{\mathcal B_1}
\le L
=
\liminf_{k\to\infty}q_{m_k}$. The proof is complete.
\end{proof}

\subsection{Proof of Proposition \ref{Approximtion theorem}}
\begin{proof}

The estimates in the proposition follow directly from the proof of
\cite[Theorem 8]{LiYuanyuan} by taking $k=1$ and $m=1$. It remains only to verify that the approximating network constructed there can be chosen from our admissible class $X_{n,c_\rho(f)}$. Indeed, the network constructed in the proof of \cite[Theorem 8]{LiYuanyuan} has the form
\[
\tilde f_n(\mathbf x)
=
\frac{1}{n}\sum_{j=1}^n
c_\rho(f)\bar a_j
\left(\bar{\boldsymbol b}_j^T\mathbf x+\bar c_j\right)_+,
\]
where $(\bar a_j,\bar{\boldsymbol b}_j,\bar c_j)$ are the sampled parameters 
from the empirical measure $\rho_n$. To relate this network to our admissible 
class $X_{n,c_\rho(f)}$, we normalize each neuron. Set $s_j:=\|\bar{\boldsymbol b}_j\|_1+|\bar c_j|$. For $s_j>0$, define $a_j:=c_\rho(f)\bar a_j s_j,
\boldsymbol b_j:=\frac{\bar{\boldsymbol b}_j}{s_j},
c_j:=\frac{\bar c_j}{s_j}.$
Then, by the positive homogeneity of the ReLU activation,
\[
c_\rho(f)\bar a_j
\left(\bar{\boldsymbol b}_j^T\mathbf x+\bar c_j\right)_+
=
a_j(\boldsymbol b_j^T\mathbf x+c_j)_+,
\]
with $\|\boldsymbol b_j\|_1+|c_j|=1$. Moreover, since the construction in \cite[Theorem 8]{LiYuanyuan} yields 
$|\bar a_j|s_j\le 1$, we have $|a_j|\le c_\rho(f)$. If $s_j=0$, the corresponding neuron is identically zero and can be represented 
by taking $a_j=0$ and any normalized pair $(\boldsymbol b_j,c_j)$ with 
$\|\boldsymbol b_j\|_1+|c_j|=1$. Hence the same network can be written as
\[
f_n(\mathbf x)
=
\frac{1}{n}\sum_{j=1}^n
a_j(\boldsymbol b_j^T\mathbf x+c_j)_+,
\]
with $(a_j,\boldsymbol b_j,c_j)\in M_{c_\rho(f)}$ for $j=1,\ldots,n$. Therefore, $f_n\in X_{n,c_\rho(f)}$.
\end{proof}



\bibliographystyle{plain}
\bibliography{References}
\end{document}